\pgfplotsset{compat=1.5}
\newtheorem{thm}{Theorem}[section]
\newtheorem{cor}[thm]{Corollary}
\newtheorem{lem}[thm]{Lemma}
\newtheorem{conj}[thm]{Conjecture}
\theoremstyle{definition}
\newtheorem{defin}[thm]{Definition}
\newtheorem{ex}[thm]{Example}
\numberwithin{equation}{section}
\def\eref#1{$(\ref{#1})$}
\def\sref#1{\S$\ref{#1}$}
\def\lref#1{Lemma~$\ref{#1}$}
\def\tref#1{Theorem~$\ref{#1}$}
\def\cyref#1{Corollary~$\ref{#1}$}
\def\dref#1{Definition~$\ref{#1}$}
\def\F{\mathcal{F}}
\def\C{\mathcal{C}}
\def\V{\mathscr{V}}
\def\Z{\mathbb{Z}}
\def\syms{\Z_p}
\def\FF{\mathbb{F}_q}
\def\lcm{\text{lcm}}
\def\M{\mathscr{A}}
\def\N{\mathscr{B}}
\def\Np{\mathscr{N}}
\def\Rp{\mathscr{R}}
\renewcommand{\geq}{\geqslant}
\renewcommand{\leq}{\leqslant}
\renewcommand{\ge}{\geqslant}
\renewcommand{\le}{\leqslant}
\renewcommand{\emptyset}{\varnothing}
\g@addto@macro\bfseries{\boldmath}
\title{Row-Hamiltonian Latin squares and Falconer varieties
  \footnote{2020 Mathematics Subject Classifications: 05B15, 05C70, 08B99, 20N05.}
}
\author{Jack Allsop \ \ Ian M. Wanless\\
  \small School of Mathematics\\[-0.5ex]
  \small Monash University\\[-0.5ex]
  \small Vic 3800, Australia\\
  \small\tt jack.allsop@monash.edu \ \ ian.wanless@monash.edu}
\date{}
\begin{document}

\maketitle

\begin{abstract}
  A \emph{Latin square} is a matrix of symbols such that each symbol
  occurs exactly once in each row and column. A Latin square
  $L$ is \emph{row-Hamiltonian} if the permutation induced by each pair
  of distinct rows of $L$ is a full cycle permutation. Row-Hamiltonian
  Latin squares are equivalent to perfect $1$-factorisations of
  complete bipartite graphs.
  For the first time, we exhibit a family of Latin squares that are
  row-Hamiltonian and also achieve precisely one of the related
  properties of being column-Hamiltonian or symbol-Hamiltonian.  This
  family allows us to construct non-trivial, anti-associative,
  isotopically $L$-closed loop varieties, solving an open problem
  posed by Falconer in 1970.
  
  \medskip
  
  \noindent Keywords: Latin square, row-Hamiltonian, quasigroup
  variety, perfect $1$-factorisation, quadratic orthomorphism.

\end{abstract}

\section{Introduction}\label{s:intro}

A \emph{quasigroup} $(Q,*)$ is a non-empty set $Q$ with a binary
operation $*$ such that, for each $a,b\in Q$, there exist unique
$x,y\in Q$ for which $a*x=b$ and $y*a=b$. The operation table for $*$
is a \emph{Latin square}, meaning that each element of $Q$ occurs
precisely once in each row and column. Conversely, when the rows and
columns of any Latin square are indexed by the symbols in the square,
the result is the \emph{Cayley table} of a quasigroup.  We will be
concerned primarily with finite quasigroups and Latin squares, and will
treat them as interchangeable notions.

It is often helpful to consider a quasigroup $(Q,*)$ in its equivalent
form as a set of \emph{triples} $(x,y,x*y)\in Q^3$. In this viewpoint,
the group $\text{Sym}(Q)^3$ has a natural action on the quasigroups
with underlying set $Q$, where $\text{Sym}(Q)$ is the set of
permutations of $Q$. The orbit of $(Q,*)$ under this action is the set
of \emph{isotopes} of $(Q,*)$ and the quasigroups in it are said to be
\emph{isotopic} to $(Q,*)$. The orbit of $(Q,*)$ under the action of
the diagonal subgroup of $\text{Sym}(Q)^3$ is the set of quasigroups
\emph{isomorphic} to $(Q,*)$. There is also a natural action of
$\text{Sym}(\{1,2,3\})$ which uniformly permutes the triples of a
quasigroup to obtain one of six \emph{conjugate} quasigroups (also called 
\emph{parastrophes}).

Suppose that $i$ and $j$ are two distinct rows of a Latin square $L$
with symbol set $S$. We define a permutation $r_{i,j}: S \to S$ by
$r_{i,j}(L_{i,k}) = L_{j,k}$ for each $k \in S$. We call $r_{i,j}$ a
{\em row permutation} between rows $i$ and $j$. Writing $r_{i,j}$ in
disjoint cycle notation, each cycle that we get is a \emph{row cycle}.
We say that a Latin square is {\it row-Hamiltonian\/} if every row
permutation consists of a single cycle. Isotopy preserves the length
of row cycles, and hence preserves the row-Hamiltonian
property. Row-Hamiltonian Latin squares are highly prized, in part
because of their close relationship to perfect $1$-factorisations of
graphs, as we explain in \sref{s:bm}. Let $\nu=\nu(L)$ denote the
number of conjugates of $L$ that are row-Hamiltonian. Our first
main result is to construct a family of Latin squares with $\nu=4$.
No such family was previously known.

\begin{thm}\label{t:RCnotS}
  Let $p > 3$ be a prime such that $p\equiv1\bmod 8$ or $p\equiv3\bmod
  8$. Then there is a Latin square $\mathscr{L}_p$ of order $p$ with
  $\nu(\mathscr{L}_p)=4$.
\end{thm}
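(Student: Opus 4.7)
The plan is to exhibit $\mathscr{L}_p$ explicitly using the arithmetic of $\mathbb{F}_p$, and then compute the row-Hamiltonian status of three representative conjugates; since row-Hamiltonianness of a conjugate depends only on which original coordinate plays the role of ``row'' (the transposition exchanging ``column'' and ``symbol'' labels preserves the cycle structure of every row permutation, as $r_{i,j}=\pi_j\pi_i^{-1}$ is conjugate in the symmetric group to its analogue $\pi_j^{-1}\pi_i$ for the $(23)$-conjugate), the remaining three conjugates are then determined. The natural candidate is a quadratic orthomorphism $\theta$ of $\mathbb{F}_p$: namely $\theta(0)=0$, $\theta(x)=\alpha x$ on the nonzero quadratic residues and $\theta(x)=\beta x$ on the nonzero non-residues, with $\alpha,\beta\in\mathbb{F}_p^*$ chosen so that both $\theta$ and $x\mapsto\theta(x)-x$ are bijections. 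The hypothesis $p\equiv 1,3\pmod 8$ is equivalent to $-2$ being a quadratic residue in $\mathbb{F}_p$, and this is precisely the arithmetic fact that allows $\alpha,\beta$ to be tuned so that the Latin square $\mathscr{L}_p[i,j]:=i+\theta(j-i)$ has the Hamiltonian symmetries we want and no more.

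With $\mathscr{L}_p$ defined, I would first verify that it is row-Hamiltonian. The row permutation $r_{i,j}$ acts piecewise affinely on $\mathbb{F}_p$, with the branch chosen by the Legendre symbols of $k-i$ and $k-j$, so showing that $r_{i,j}$ is a single $p$-cycle reduces to analysing an alternation between ``QR-steps'' and ``NR-steps''. I would translate the no-shorter-cycle requirement into a multiplicative statement in $\mathbb{F}_p^*$ and resolve it using elementary properties of the quadratic character together with $\left(\tfrac{-2}{p}\right)=1$. This verification, repeated for every nonzero shift $j-i$, is the main technical obstacle, and it is the step that most essentially uses the mod-$8$ hypothesis.

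It then remains to show that exactly one of column-Hamiltonianness or symbol-Hamiltonianness also holds. One of them, say column-Hamiltonianness (that is, row-Hamiltonianness of the transpose $\mathscr{L}_p^{(12)}$), should follow by an almost parallel analysis: the column permutations of $i+\theta(j-i)$ have the same piecewise affine structure after swapping the roles of $i$ and $j$, so the cycle-structure argument transfers with no essentially new ideas. To rule out the remaining property, I would exhibit a specific pair of symbols whose induced permutation factors nontrivially --- explicitly, a short cycle of bounded length whose existence is forced by the asymmetric roles of $\alpha$ and $\beta$ inside $\theta$, and which can be produced uniformly in $p$ by a small calculation in $\mathbb{F}_p$ (this is where the restriction $p>3$ is genuinely needed so that the ambient field is large enough for the counterexample to live). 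Combining the three verifications, exactly two of the three row-source options for the conjugates of $\mathscr{L}_p$ yield row-Hamiltonian squares, and each accounts for two conjugates, giving $\nu(\mathscr{L}_p)=4$.
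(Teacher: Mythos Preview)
Your proposal correctly identifies the construction (a quadratic orthomorphism over $\mathbb{F}_p$; the paper uses $\mathcal{L}[-1,2]$) and the overall three-conjugate strategy, and your observation that column-Hamiltonianness follows from row-Hamiltonianness by a near-symmetry is right: the paper gets it from the fact that the $(2,1,3)$-conjugate of $\mathcal{L}[-1,2]$ is $\mathcal{L}[2,-1]$, isomorphic to the original. However, two of your steps contain genuine gaps.

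First, the step you describe as ``translate the no-shorter-cycle requirement into a multiplicative statement \dots\ and resolve it using elementary properties of the quadratic character'' is in fact the crux of the paper, and elementary character arguments do not suffice. The row permutation $r_{0,1}$ is piecewise affine as you say, but its global cycle structure is not determined by any identity in the Legendre symbol; for most quadratic orthomorphisms short row cycles do occur, and there is no known character-theoretic criterion that isolates $(-1,2)$. That $r_{0,1}$ of $\mathcal{L}[-1,2]$ is a $p$-cycle was a published open conjecture for over a decade. The paper resolves it by an entirely different route: it encodes the cycle question as the non-singularity over $\mathbb{Z}_2$ of the Cohn--Lempel--Beck link relation matrix of size $(p-1)\times(p-1)$, and then proves non-singularity via a three-stage algorithm of determinant-preserving deletions that reduces the matrix to a $2\times 2$ block. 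Your outline gives no mechanism for this step.

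Second, your plan to exhibit a short symbol cycle ``uniformly in $p$ by a small calculation'' cannot work as stated, because $\mathscr{L}_{19}=\mathcal{L}[-1,2]$ is atomic ($\nu=6$): no short symbol cycle exists there, and the paper uses a completely different square of order $19$. For $p\notin\{3,19\}$ the paper does produce a $3$-cycle in $s_{0,1}$, but not by a direct calculation: one writes down five Legendre-symbol constraints whose simultaneous satisfaction forces the $3$-cycle, and then uses the Weil bound on character sums to guarantee a solution once $p\ge 1697$, with the remaining primes handled by computer. The restriction $p>3$ is not merely about having room for a counterexample; there is no Latin square of order $3$ with $\nu=4$ at all.
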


In proving \tref{t:RCnotS}, we will settle a conjecture first
published in~\cite{MR2563050}. The key to proving the conjecture is to
convert it into a problem in linear algebra. Our approach is novel
for row-Hamiltonian Latin squares, which had previously only been studied
using combinatorial methods.

\medskip

The second main result of this paper is a solution to an open problem
due to Falconer~\cite{MR272932}. Before we can state our result we
must give some further definitions. 
A quasigroup $(Q,*)$ is a \emph{loop} if there is some
$e \in Q$ such that $e * q = q * e = q$ for all $q \in Q$. Every
quasigroup is isotopic to at least one loop.  Given a set $E$ of
quasigroup identities, we call the set of quasigroups satisfying all
identities in $E$ the \emph{quasigroup variety} defined by
$E$. Similarly, the set of all loops satisfying all identities
in $E$ is the \emph{loop variety} defined by $E$. For example, the loop
variety defined by the associative identity is simply the set of all
groups. Many quasigroup varieties have been extensively studied. For
an introduction to the subject, see Chapter 2 of~\cite{MR3495977}.

We call a loop variety \emph{isotopically $L$-closed} if it contains every loop that is isotopic to any element of the variety. A variety is \emph{anti-associative} if the only group it contains is the trivial group. The trivial group is a member of any variety, as it satisfies any quasigroup identity. Any other member of the variety is \emph{non-trivial} and we call the variety \emph{non-trivial} if it has a non-trivial member.

We are now in a position to state Falconer's problem. She asked if
there exists a non-trivial, anti-associative, isotopically $L$-closed
loop variety. In Falconer's honour we call such loop varieties
\emph{Falconer varieties}. We answer her problem by proving the
existence of infinitely many Falconer varieties.

\begin{thm}\label{t:falcsol}
  For each prime $p>3$ with $p\equiv1\bmod8$ or $p\equiv3\bmod8$,
  there exists a Falconer variety $\V_p$ whose smallest non-trivial
  member is a loop of order $p$.
\end{thm}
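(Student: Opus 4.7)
The plan is to leverage \tref{t:RCnotS}: the Latin square $\mathscr{L}_p$, with $\nu(\mathscr{L}_p) = 4$, will generate the desired variety $\V_p$. Non-triviality and isotopic $L$-closure will be structural, while anti-associativity will follow from the mismatch between $\nu(\mathscr{L}_p) = 4$ and the value $\nu = 6$ forced on any row-Hamiltonian group.

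\emph{Construction of $\V_p$.} Every Latin square is isotopic to the Cayley table of some loop, and isotopy preserves cycle structure, so first I replace $\mathscr{L}_p$ by an isotopic loop $\L_p$ with $\nu(\L_p) = 4$. I then take $\V_p$ to be the smallest isotopically $L$-closed loop variety containing $\L_p$; this exists as the intersection of all isotopically $L$-closed loop varieties containing $\L_p$, a collection that is non-empty because the variety of all loops is itself isotopically $L$-closed. Equivalently, $\V_p$ is the variety defined by all loop-isotopy invariant identities satisfied by $\L_p$. Non-triviality is immediate from $\L_p \in \V_p$.

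\emph{Anti-associativity.} The only groups whose Cayley tables are row-Hamiltonian are the cyclic groups $\Z_q$ of prime order $q$, and for these $\nu = 6$. Thus ``$\nu \ne 6$'' separates $\L_p$ from every row-Hamiltonian group, while not being row-Hamiltonian separates $\L_p$ from every other non-trivial group. I aim to repackage this separation as a loop-isotopy invariant identity $\phi$: using the loop operation $*$ and its left and right divisions $\backslash, /$, I write a word equation that witnesses the failure of row-Hamiltonianity in the two non-row-Hamiltonian parastrophes of $\L_p$. Such $\phi$ holds in $\L_p$ by construction, is isotopy-invariant because it refers only to parastrophic cycle structure (itself an isotopy invariant), and fails in every non-trivial group. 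Hence $\phi$ lies in the defining set of $\V_p$ and excludes every non-trivial group from $\V_p$.

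\emph{Minimal order and main obstacle.} For the claim that the smallest non-trivial member of $\V_p$ has order $p$: the identity $\phi$ combined with the loop axioms and the primality of $p$ should force any non-trivial member of $\V_p$ to carry an element whose associated row permutation has order $p$, hence have order at least $p$; since $\L_p \in \V_p$ realises this lower bound, the minimum is $p$. The hard part is the construction of $\phi$: row-Hamiltonianity is a global cycle-structure condition not itself first-order expressible, so $\phi$ must capture a finitary consequence of the mismatch $\nu(\mathscr{L}_p) = 4$ versus $\nu = 6$ which is also isotopy-invariant. Producing $\phi$ will rely on the explicit quadratic-orthomorphism construction of $\mathscr{L}_p$ supplied by the proof of \tref{t:RCnotS}, and on a careful choice of which pair of non-row-Hamiltonian parastrophes to extract the identity from.
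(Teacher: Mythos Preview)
Your proposal has a genuine gap: you never construct the identity $\phi$, and you explicitly flag this as ``the hard part''. Without $\phi$ in hand, none of anti-associativity, the minimum-order claim, or even the usefulness of isotopic $L$-closure can be checked. The abstract move of taking the smallest isotopically $L$-closed loop variety containing $\L_p$ does not sidestep the difficulty: proving that variety anti-associative amounts to exhibiting, for every non-trivial group, an isotopy-invariant identity satisfied by $\L_p$ but not by the group --- precisely the construction you defer. Your remark that row-Hamiltonianity is ``not first-order expressible'' is also misleading here: for a fixed prime $p$, the single identity $(L_x^{-1}L_y)^p(z)=z$ captures exactly that every row permutation has order dividing $p$, which (since row permutations are fixed-point free) forces row-Hamiltonianity in any non-trivial loop of order at most $p$.

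The paper's argument is much more concrete. It defines $\V_p$ by two explicit identities, $(L_x^{-1}L_y)^p(z)=z$ and $(R_x^{-1}R_y)^{m_p}(z)=z$ with $m_p=\lcm(1,\dots,p-1)$. The first forces row permutations to have order dividing $p$; the second forces column permutations to have order dividing $m_p$, hence to contain no $p$-cycle. In any group these together force every element to have order dividing $\gcd(p,m_p)=1$, giving anti-associativity, while the first identity alone forces any non-trivial member to have order divisible by $p$. One point you would also have to address: the paper does \emph{not} work with a loop isotope of $\mathscr{L}_p$ itself, but of its $(2,3,1)$-conjugate. Since $\mathscr{L}_p$ is both row- and column-Hamiltonian, its column permutations are $p$-cycles and would violate the second identity; passing to the conjugate converts the failure of symbol-Hamiltonianity of $\mathscr{L}_p$ into failure of column-Hamiltonianity, which is what the identity $(R_x^{-1}R_y)^{m_p}(z)=z$ detects. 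Your choice of $\L_p$ isotopic to $\mathscr{L}_p$ would run straight into this obstacle.
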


The structure of the paper is as follows. In \sref{s:bm} we will
demonstrate the motivation behind \tref{t:RCnotS}, and also give
further background information. In \sref{s:main} we will define an
infinite family of Latin squares, and show that this family has
$\nu=4$, proving \tref{t:RCnotS}. In \sref{s:app} we will use the
Latin squares constructed in \sref{s:main} to prove
\tref{t:falcsol}. In \sref{s:c} we give some directions for future
research.

\section{Motivation and background}\label{s:bm}

A \emph{$1$-factor}, or \emph{perfect matching}, of a graph $G$ is a
subset $M$ of the edge set of $G$ such that every vertex is incident
to exactly one edge in $M$. A \emph{$1$-factorisation} of a graph $G$
is a partition of the edge set of $G$ into $1$-factors. We call a
$1$-factorisation \emph{perfect} if the union of any two distinct
$1$-factors in the $1$-factorisation forms a Hamiltonian cycle in $G$.
Kotzig~\cite{MR0173249} famously conjectured that there is a perfect
$1$-factorisation of the complete graph $K_{2n}$ for any positive
integer $n$. Much effort has been put into attacking this conjecture
but it is still very far from being resolved. Historically, each
increase in the smallest order for which Kotzig's conjecture is
unresolved has merited a paper.  See~\cite{MR3954017} for a discussion
of the history and~\cite{MR4070707} for the complete list of orders
for which Kotzig's conjecture is known to hold.

Row-Hamiltonian Latin squares are equivalent to
perfect $1$-factorisations of complete bipartite graphs.
Given $\F$, a perfect $1$-factorisation of $K_{n}$, you can construct a
perfect $1$-factorisation of the complete bipartite graph
$K_{n-1,n-1}$. Indeed, from $\F$ you can obtain a number of non-isomorphic
perfect $1$-factorisations of $K_{n-1,n-1}$ equal to the number of
orbits of the automorphism group of $\F$.
For further details of all these connections, see~\cite{MR2130738}.
Not all perfect $1$-factorisations
of complete bipartite graphs are obtainable from perfect $1$-factorisations
of complete graphs. This is important for our purposes, because the
perfect $1$-factorisations that are obtained in this way create
Latin squares with $\nu\in\{2,6\}$, and hence are no use for proving
\tref{t:RCnotS}.

Another major reason for interest in row-Hamiltonian Latin squares is
their relationship to the class of Latin squares which are devoid of
proper Latin subsquares. Such squares, called $N_\infty$ squares, are
conjectured to exist for all orders $n\notin\{4,6\}$. This has been
proved~\cite{MR772581, MR2261822} for all orders not of the form
$2^a3^b$ for $a\ge1$ and $b\ge0$.  It was noted in~\cite{MR1670298}
that a Latin square of order $n$ is row-Hamiltonian if and only if it
contains no $a \times b$ Latin subrectangle with $1 < a \leq b < n$.
This is a much stronger property than $N_\infty$, which itself is
extremely rarely achieved. It is known (see, e.g.~\cite{arXiv:2106.11932})
that the overwhelming majority of Latin squares have quadratically many
Latin subsquares.

As mentioned in \sref{s:intro}, each Latin square has six
\emph{conjugate} squares obtained by uniformly permuting the
co-ordinates within its triples.  These conjugates can be labelled by
a permutation giving the new order of the co-ordinates, relative to
the former order of $(1,2,3)$. For example, the $(1,2,3)$-conjugate
is the square itself and the $(2,1,3)$-conjugate is its matrix
transpose. The $(1,3,2)$-conjugate is found by interchanging columns
and symbols, which is another way of saying that each row, when
thought of as a permutation, is replaced by its inverse. 
A consequence first noted in \cite{MR1670298} is this:

\begin{lem}\label{l:nueven}
  A Latin square is row-Hamiltonian if and only if its
  $(1,3,2)$-conjugate is row-Hamiltonian. Hence,
  $\nu(L)\in\{0,2,4,6\}$ for any Latin square $L$.
\end{lem}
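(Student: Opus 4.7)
The plan is to prove the two assertions in turn. For the first, I would work with the triple/permutation representation of a Latin square. Write row $i$ of $L$ as the permutation $\sigma_i$ of the symbol set $S$ defined by $\sigma_i(k)=L_{i,k}$; the definition of a row permutation then gives $r_{i,j}=\sigma_j\sigma_i^{-1}$. The $(1,3,2)$-conjugate $L'$ is obtained from $L$ by swapping the second and third coordinates of every triple, so row $i$ of $L'$, viewed as a permutation, is precisely $\sigma_i^{-1}$ (as already noted in the excerpt). Running the same computation for $L'$ yields $r'_{i,j}=\sigma_j^{-1}\sigma_i$. A one-line check gives
\[
\sigma_j^{-1}\,r_{i,j}^{-1}\,\sigma_j \;=\; \sigma_j^{-1}(\sigma_i\sigma_j^{-1})\sigma_j \;=\; \sigma_j^{-1}\sigma_i \;=\; r'_{i,j},
\]
so $r'_{i,j}$ is conjugate in the symmetric group to $r_{i,j}^{-1}$. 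Since both conjugation and inversion preserve cycle type, $r'_{i,j}$ has the same cycle type as $r_{i,j}$; in particular one is a full cycle if and only if the other is, so $L$ is row-Hamiltonian iff $L'$ is.

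For the second statement, I would invoke the natural action of $\text{Sym}(\{1,2,3\})$ on the six labelled conjugates of $L$; passing to the $(1,3,2)$-conjugate corresponds to applying the transposition $\tau=(2\,3)$. Since $\tau$ has order two and no fixed points under left multiplication on $\text{Sym}(\{1,2,3\})$, its orbits partition the six labels into three pairs. Applying the first part of the lemma to each conjugate of $L$ in turn, the two conjugates in a single pair are either both row-Hamiltonian or both not, so each pair contributes either $0$ or $2$ to $\nu(L)$, yielding $\nu(L)\in\{0,2,4,6\}$.

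The argument is essentially formal once the identity $r'_{i,j}=\sigma_j^{-1}\sigma_i$ is pinned down, so there is no real obstacle. The only minor subtlety worth flagging is that $r'_{i,j}$ formally acts on what was the column set of $L$ rather than on its symbol set, but since cycle type depends only on the abstract permutation this relabelling is immaterial.
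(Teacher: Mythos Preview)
Your argument is correct. The paper does not actually supply a proof of this lemma; it merely records the key observation that in the $(1,3,2)$-conjugate each row, viewed as a permutation, is replaced by its inverse, and then cites the original source. Your proof makes this observation precise via the identity $r'_{i,j}=\sigma_j^{-1}r_{i,j}^{-1}\sigma_j$ and then draws the parity consequence for $\nu$ from the orbit structure of the involution $(2\,3)$ on the six conjugate labels, which is exactly the intended mechanism. So your approach is the natural completion of what the paper sketches rather than a genuinely different route.
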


\emph{Column permutations} and \emph{symbol permutations}
can be defined similarly to row permutations, and the operations of conjugacy
interchange these objects. In particular, the row permutations of $L$
correspond to column permutations of the $(2, 1, 3)$-conjugate of $L$,
and to symbol permutations of the $(3, 2, 1)$-conjugate of $L$. This
also leads to notions of column-Hamiltonian and symbol-Hamiltonian
squares where all the column and symbol permutations, respectively, are
single cycles. Another view of \tref{t:RCnotS} is that we are showing
that any two of the three related notions (row-Hamiltonian,
column-Hamiltonian and symbol-Hamiltonian) can be achieved without
achieving the third property.

The case when all three properties hold has been studied in some detail.
If $\nu(L)=6$ for a Latin square $L$, then $L$ is said to be {\em
  atomic} because it has an indivisible structure mimicking the
structure of cyclic groups of prime order.  The
papers~\cite{MR2216455,MR2024246,MR1428074,MR2134185,MR1670298}
construct atomic Latin squares, including several infinite families.
An infinite family of Latin squares for which $\nu=2$ is constructed
in~\cite{MR1899629}. It is easy to construct Latin squares with
$\nu=0$. In particular, any Latin square $L$ of order $n \geq 4$
satisfying $L_{0, 0} = L_{1,1}= 0$ and $L_{0, 1} = L_{1, 0} = 1$ will
have $\nu=0$. By a result of Ryser~\cite{MR42361}, such Latin squares
exist of all orders $n \geq 4$. An explicit example of an infinite
family of Latin squares with $\nu=0$ is the family of addition tables
of the cyclic groups of composite order.  In contrast, almost nothing
was previously known about Latin squares with
$\nu=4$. From~\cite{MR1670298} we know that they have to have odd
order and the smallest example has order $11$. However, the example of
order $11$ in~\cite{MR1670298} was the only published example. This is
one motivation for us to construct an infinite family.

Our construction of the squares in
\tref{t:RCnotS} imitates the construction of atomic Latin squares
in~\cite{MR2134185}. This construction uses so-called cyclotomic
orthomorphisms and diagonally cyclic Latin squares, which we now
define. Let $q$ be a prime power and let $nk = q - 1$, for some
positive integers $n$ and $k$. Let $\gamma$ be a primitive element of
the finite field $\FF$. For $0 \leq j \leq n-1$ we define
$C_j=\{\gamma^{ni+j} : 0 \leq i \leq k - 1\}$ to be a \emph{cyclotomic
  coset} of the unique subgroup $C_0$ of index $n$ in $\FF^*$. A
\emph{cyclotomic map} $\varphi = \varphi_{\gamma}[a_0,\dots,a_{n-1}]$
of index $n$ can then be defined by
\begin{equation}\label{e:cyceqn}
  \varphi(x) = \begin{cases}
    0 &\text{if } x = 0, \\
    a_ix &\text{if } x \in C_i,
  \end{cases}
\end{equation}
where $a_0, \dots, a_{n - 1} \in \FF$.

If $\varphi$ is a cyclotomic map of index $2$ then we call $\varphi$ a
\emph{quadratic map}. In general, the cyclotomic cosets $C_j$ will
depend on the choice of the primitive element $\gamma$. However, we
will almost exclusively be dealing with quadratic maps over $\Z_p$,
the field with $p$ elements. In this case we have $C_0=\Rp_p$, the set
of quadratic residues in the multiplicative group $\Z_p^*$, and
$C_1=\Np_p$, the set of quadratic non-residues in $\Z_p^*$.  In the
quadratic case the choice of $\gamma$ is immaterial, so we omit the
subscript that specifies it. Also, we will find it convenient to index
the rows and columns of our Latin squares using $\Z_p$ so that they
correspond to quasigroups with the same underlying set as $\Z_p$, but
with a different binary operation.

A permutation $\theta : \Z_p \to \Z_p$ is called an
\emph{orthomorphism} if the map $x \mapsto \theta(x) - x$ is also a
permutation of $\Z_p$. If an orthomorphism
$\varphi$ is a cyclotomic map over $\Z_p$, then we call $\varphi$ a
\emph{cyclotomic orthomorphism}. It is well known (see
e.g.~\cite{MR3837138}) that a necessary and sufficient condition for
the quadratic map $\varphi[{a, b}]$ to be an orthomorphism is that
$ab\in\Rp_p$ and $(a-1)(b-1) \in \Rp_p$.

An $n \times n$ matrix $L$ is called diagonally cyclic if $L_{i, j}=k$
implies $L_{i+1, j+1} = k+1$ for any $i, j, k \in \Z_n$. If $L$ is
also a Latin square then we call $L$ a diagonally cyclic Latin
square. It is known that no diagonally cyclic Latin square of even
order exists. This result, in another form, has been known since
Euler. We note the following simple result.

\begin{lem}\label{l:dclsrp}
  Every row permutation of a diagonally cyclic Latin square has an odd
  number of cycles.
\end{lem}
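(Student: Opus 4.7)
The plan is to use the sign of the permutation $r_{i,j}$ and the relationship between sign and cycle count. Since a diagonally cyclic Latin square has odd order $n$, any permutation $\pi$ of the symbol set satisfies $\operatorname{sgn}(\pi) = (-1)^{n - c(\pi)} = (-1)^{1 - c(\pi)}$, where $c(\pi)$ is the number of cycles of $\pi$. Hence $c(\pi)$ is odd if and only if $\pi$ is an even permutation. So it suffices to show that every row permutation $r_{i,j}$ of a diagonally cyclic Latin square is an even permutation.

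To show this, I would exhibit $r_{i,j}$ as a commutator in the symmetric group, which automatically lies in the alternating group. Let $\tau$ denote the permutation sending $k$ to $L_{i,k}$, and let $T_d$ denote the translation $x \mapsto x + d$ on $\Z_n$, where $d = j - i$. Iterating the diagonal-cyclicity rule gives $L_{j,k} = L_{i, k-d} + d$, so
\[
r_{i,j}(\tau(k)) = L_{j,k} = \tau(k - d) + d
\]
for every $k$, which rearranges to $r_{i,j} = T_d \circ \tau \circ T_{-d} \circ \tau^{-1}$. Thus $r_{i,j}$ is the commutator $[T_d, \tau]$, and since the sign is a homomorphism to an abelian group, $\operatorname{sgn}(r_{i,j}) = 1$.

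Combining the two observations completes the argument: $r_{i,j}$ is even, and with $n$ odd this forces $c(r_{i,j})$ to be odd. I do not expect any serious obstacle here; the only mildly delicate point is remembering to invoke the previously cited fact that $n$ must be odd, which is what makes the parity calculation $(-1)^{n - c} = (-1)^{1-c}$ swing in the desired direction. Everything else is a two-line identity and the standard fact that commutators are even.
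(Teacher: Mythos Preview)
Your argument is correct and follows the same two-step strategy as the paper: first show that every row permutation of a diagonally cyclic Latin square is an even permutation, then combine this with the (already stated) fact that the order $n$ is odd to deduce that the cycle count is odd. The only difference is that where the paper simply cites an external result (\cite[Theorem~12]{MR2036476}) for the evenness of $r_{i,j}$, you supply a self-contained proof by exhibiting $r_{i,j}=T_d\,\tau\,T_{-d}\,\tau^{-1}$ as a commutator; this is a nice bonus, and the parity deduction via $\operatorname{sgn}(\pi)=(-1)^{n-c(\pi)}$ is just a one-line repackaging of the paper's cycle-length counting.
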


\begin{proof}
  Let $L$ be a diagonally cyclic Latin square and let $r$ be a row
  permutation of $L$. From~\cite[Theorem $12$]{MR2036476} we know that
  $r$ is an even permutation, hence the number of cycles of $r$ of
  even length is even. As the order of $L$ is odd, we must have an odd
  number of cycles of odd length and thus the total number of cycles
  of $r$ is odd.
\end{proof}

A necessary condition for a Latin square to be row-Hamiltonian is that
all row permutations contain an odd number of cycles. So when
searching for row-Hamiltonian Latin squares, \lref{l:dclsrp} tells us
that it makes sense to consider those squares which are diagonally
cyclic. Another major advantage of diagonally cyclic Latin squares is
their large automorphism group (cf.~\lref{l:sufconrh}).

Clearly a diagonally cyclic Latin square is uniquely determined by its
first row. It is known~\cite{MR2036476} that the $p \times p$
diagonally cyclic matrix $L$ with first row defined by $L_{0, j} =
\varphi(j)$ for some $\varphi : \Z_p \to \Z_p$ is a Latin square if
and only if $\varphi$ is an orthomorphism of $\Z_p$. In this case we
say that $L$ is the Latin square generated by $\varphi$. If
$\varphi_{\gamma}[a_0, a_1, \ldots, a_{n-1}]$ is a cyclotomic
orthomorphism of $\Z_p$ then we denote the Latin square generated by
$\varphi_{\gamma}[a_0, a_1, \ldots, a_{n-1}]$ by
$\mathcal{L}_\gamma[a_0, a_1, \ldots, a_{n-1}]$.

\section{Row-Hamiltonian Latin squares}\label{s:main}

In this section we will prove \tref{t:RCnotS} by constructing an
infinite family of Latin squares with $\nu=4$. Let $p$ be a prime with
$p \equiv 1 \bmod 8$ or $p \equiv 3 \bmod 8$. Define the Latin square
$\mathscr{L}_p$ to be the $p \times p$ Latin square $\mathcal{L}[-1,2]$.
Explicitly,
\[
(\mathscr{L}_p)_{i, j} = \begin{cases}
  i & \text{if } j = i, \\
  i - (j-i) & \text{if } j-i \in \Rp_p, \\
  i + 2(j-i) & \text{if } j-i \in \Np_p.
\end{cases}
\]
We will show that $\nu(\mathscr{L}_p) = 4$ for all $p$, except for the
special cases where $p \in \{3, 19\}$. It is easy to check that
the squares $\mathscr{L}_{3}$ and $\mathscr{L}_{19}$ are atomic.

\subsection{Proving that $\mathscr{L}_p$ is row-Hamiltonian}\label{ss:conj}

For a positive integer $n$ we denote the group of permutations on the set $\{0, 1, 2, \ldots, n-1\}$ by $\mathcal{S}_n$. By the \emph{cycle structure} of such a permutation we mean a sorted list of the lengths of its cycles. 

The first step in showing that $\mathscr{L}_p$ has $\nu=4$, except when $p \in \{3, 19\}$, is to show that each square $\mathscr{L}_p$ is row-Hamiltonian. 
To do this from the definition we must check that every row permutation of $\mathscr{L}_p$ is a $p$-cycle. However, we can use the large automorphism group of the square to dramatically reduce the workload, via the following result from~\cite{MR2134185}.

\begin{lem}\label{l:sufconrh}
  Let $p$ be an odd prime and let $a, b \in \Z_p$ be such that $ab, (a - 1)(b - 1) \in \Rp_p$.
  \begin{enumerate}[(i)]
  \item If $p \equiv 3 \bmod 4$ then the cycle structure of any row permutation of $\mathcal{L}[a, b]$ is equal to the cycle structure of the permutation $r_{0, 1}$ of $\mathcal{L}[a, b]$,
  \item If $p \equiv 1 \bmod 4$ then let $c \in \Np_p$. The cycle structure of any row permutation of $\mathcal{L}[a, b]$ is equal to the cycle structure of either the permutation $r_{0,1}$ or $r_{0, c}$ of $\mathcal{L}[a, b]$.
  \end{enumerate}
  Similar statements hold for column and symbol permutations of $\mathcal{L}[a, b]$.
\end{lem}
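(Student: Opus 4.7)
The plan is to exploit the large autotopism group of $\mathcal{L}[a,b]$ to pin down the conjugacy classes of its row permutations. Two autotopisms play the main role. The first, coming directly from diagonal cyclicity, is the translation $(i,j,k) \mapsto (i+t, j+t, k+t)$, which is an autotopism for every $t \in \Z_p$ and which conjugates $r_{i,j}$ to $r_{i+t, j+t}$. This already reduces the problem to studying the permutations $r_{0,d}$ for $d \in \Z_p^*$. The second autotopism exploits the quadratic structure of $\varphi = \varphi[a,b]$: because multiplication by any $\alpha \in \Rp_p$ preserves each of the cosets $\Rp_p$ and $\Np_p$ setwise, the identity $\varphi(\alpha x) = \alpha \varphi(x)$ holds for every $x \in \Z_p$. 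Applied to the rule $L_{i,j} = i + \varphi(j-i)$, this shows that $(i,j,k) \mapsto (\alpha i, \alpha j, \alpha k)$ is also an autotopism, and it conjugates $r_{0,d}$ to $r_{0, \alpha d}$.

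Combining both autotopisms, the cycle structure of $r_{0,d}$ depends only on the quadratic class of $d$. So at most two distinct cycle structures occur among all row permutations, represented by $r_{0,1}$ and $r_{0,c}$ for any fixed $c \in \Np_p$; this establishes statement (ii).

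To collapse these two classes into one when $p \equiv 3 \bmod 4$, I use the further observation that $r_{j,i} = r_{i,j}^{-1}$, so $r_{i,j}$ and $r_{j,i}$ always share the same cycle structure. Combined with translation by $-d$, this gives that $r_{0,d}$ has the same cycle structure as $r_{d,0}$, which in turn matches that of $r_{0,-d}$. When $p \equiv 3 \bmod 4$ one has $-1 \in \Np_p$, so $d$ and $-d$ lie in different quadratic classes, and the identification fuses the two a priori distinct cycle structures into one, proving statement (i). When $p \equiv 1 \bmod 4$ we have $-1 \in \Rp_p$ instead, so this identification gives nothing new and we cannot expect to do better than (ii).

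Finally, the analogous statements for column and symbol permutations follow by running the same argument after applying the appropriate conjugacy: the translation and scaling autotopisms act simultaneously on all three coordinates, so the $(2,1,3)$- and $(3,2,1)$-conjugates of $\mathcal{L}[a,b]$ inherit both autotopisms and admit identical reductions (alternatively, one repeats the two-autotopism argument directly using the definitions of column and symbol permutations). The only real obstacle is bookkeeping, principally verifying the scaling identity $\varphi(\alpha x) = \alpha \varphi(x)$ and tracking carefully how autotopisms conjugate row, column, and symbol permutations; no genuinely hard step is anticipated.
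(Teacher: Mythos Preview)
Your argument is correct and is essentially the standard one: the translation and quadratic-residue scaling maps generate a group of autotopisms of $\mathcal{L}[a,b]$ acting on pairs of rows with orbits indexed by the quadratic class of $j-i$, and the involution $r_{i,j}\mapsto r_{i,j}^{-1}=r_{j,i}$ fuses the two classes exactly when $-1\in\Np_p$, i.e.\ when $p\equiv3\bmod4$.

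Note, however, that the paper does not itself prove this lemma; it is quoted verbatim as a result from~\cite{MR2134185}. Your writeup therefore supplies an argument that the paper omits. The approach you describe is the same one underlying the cited reference, so there is no genuine methodological difference to report --- you have simply reconstructed the expected proof.
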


We note that \lref{l:sufconrh} cannot be improved in the case where $p \equiv 1 \bmod 4$. For example, the row permutation $r_{0,1}$ of the $17 \times 17$ Latin square $\mathcal{L}[14, 10]$ is a $17$-cycle, but the square is not row-Hamiltonian. 

To use \lref{l:sufconrh} to conclude that $\mathscr{L}_p$ is row-Hamiltonian when $p \equiv 1 \bmod 8$, we must show that the permutation $r_{0, c}$ of $\mathscr{L}_p$ is a $p$-cycle, for some $c \in \Np_p$.
It would be inconvenient for our calculations to not have an explicit value for $c$, so we will instead use a different set of sufficient conditions for a Latin square generated by a quadratic orthomorphism to be row-Hamiltonian. From~\cite{MR2134185} we know that the Latin squares $\mathcal{L}[a, b]$ and $\mathcal{L}[b, a]$ are isomorphic via the isomorphism $x \mapsto cx$, for any $c \in \Np_p$. Combining this with \lref{l:sufconrh} we get the following result.

\begin{lem}\label{l:altsufconrh}
  Let $p$ be an odd prime and let $a, b \in \Z_p$ be such that $ab, (a-1)(b-1) \in \Rp_p$. 
  \begin{enumerate}[(i)]
  \item If $p \equiv 3 \bmod 4$ then the Latin square $\mathcal{L}[a, b]$ is row-Hamiltonian if the permutation $r_{0,1}$ of $\mathcal{L}[a, b]$ is a $p$-cycle.
  \item If $p \equiv 1 \bmod 4$ then the Latin square $\mathcal{L}[a, b]$ is row-Hamiltonian if the permutation $r_{0,1}$ of $\mathcal{L}[a, b]$ and the permutation $r_{0,1}$ of $\mathcal{L}[b, a]$ are both $p$-cycles.
  \end{enumerate}
\end{lem}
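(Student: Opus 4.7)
The plan is to obtain \lref{l:altsufconrh} as an almost immediate consequence of \lref{l:sufconrh} together with the isomorphism $x \mapsto cx$ from $\mathcal{L}[a,b]$ to $\mathcal{L}[b,a]$ (valid for any $c \in \Np_p$) that is quoted immediately before the lemma statement. Part (i) will be essentially free: \lref{l:sufconrh}(i) tells us that every row permutation of $\mathcal{L}[a,b]$ has the same cycle structure as $r_{0,1}$, so if $r_{0,1}$ is a $p$-cycle then every row permutation is a $p$-cycle and $\mathcal{L}[a,b]$ is row-Hamiltonian.

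For part (ii), \lref{l:sufconrh}(ii) reduces the task to exhibiting some $c' \in \Np_p$ for which $r_{0,c'}$ of $\mathcal{L}[a,b]$ is also a $p$-cycle. I will manufacture such a $c'$ from the hypothesis on $\mathcal{L}[b,a]$. Fix any $c \in \Np_p$, so that $c^{-1} \in \Np_p$, and let $\sigma : x \mapsto cx$ denote the stated isomorphism $\mathcal{L}[a,b] \to \mathcal{L}[b,a]$. Because $\sigma$ rescales rows, columns and symbols uniformly by $c$, conjugating by $\sigma$ carries the row permutation $r_{i,j}$ of $\mathcal{L}[a,b]$ to the row permutation $r_{ci,cj}$ of $\mathcal{L}[b,a]$ and hence preserves cycle structure. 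Applying this with $(i,j) = (0, c^{-1})$ shows that $r_{0,c^{-1}}$ of $\mathcal{L}[a,b]$ and $r_{0,1}$ of $\mathcal{L}[b,a]$ have the same cycle structure. By hypothesis the latter is a $p$-cycle, so I will take $c' = c^{-1}$: by \lref{l:sufconrh}(ii) every row permutation of $\mathcal{L}[a,b]$ has cycle structure matching either $r_{0,1}$ or $r_{0,c^{-1}}$, both of which are $p$-cycles, and hence $\mathcal{L}[a,b]$ is row-Hamiltonian.

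There is not much that can go wrong here. The only subtlety to keep in mind is tracking the direction of the isomorphism correctly, so that the non-residue supplied to \lref{l:sufconrh}(ii) matches the row of $\mathcal{L}[a,b]$ whose permutation is controlled by the hypothesis on $\mathcal{L}[b,a]$; this forces the choice $c' = c^{-1}$ rather than $c' = c$. Once that bookkeeping is lined up the argument is complete, and no new ingredients beyond \lref{l:sufconrh} and the already-stated isomorphism between $\mathcal{L}[a,b]$ and $\mathcal{L}[b,a]$ are needed.
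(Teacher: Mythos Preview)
Your proposal is correct and follows exactly the route the paper takes: the paper's entire proof is the sentence ``Combining this with \lref{l:sufconrh} we get the following result,'' where ``this'' is precisely the isomorphism $x\mapsto cx$ between $\mathcal{L}[a,b]$ and $\mathcal{L}[b,a]$ for $c\in\Np_p$. You have merely spelled out the details that the paper leaves implicit, including the bookkeeping that the non-residue fed into \lref{l:sufconrh}(ii) should be $c^{-1}$.
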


\subsubsection{A linear algebra approach}\label{ss:linalg}

One goal for the remainder of this subsection is to prove that the permutation $r_{0, 1}$ of $\mathscr{L}_p$ is a $p$-cycle whenever $p \equiv 1 \bmod 8$ or $p \equiv 3 \bmod 8$. We will also prove that the permutation $r_{0, 1}$ of the $p \times p$ Latin square $\mathcal{L}[2, -1]$ is a $p$-cycle for $p \equiv 1 \bmod 8$. To assist in these tasks, we will use a linear algebraic method to determine the cycle structure of a permutation. This method was developed by Beck~\cite{MR442065}, extending the work of Cohn and Lempel~\cite{MR313073}. 

For a positive integer $n$, let $\tau_n$ denote the $n$-cycle,
 $ %\begin{equation*}
  \tau_n = (0, 1, 2, \ldots, n - 1) \in \mathcal{S}_n.
 $ %\end{equation*}
We will need the following lemma concerning products of permutations with the $n$-cycle $\tau_n$. Note that throughout this paper we compose permutations left to right.

\begin{lem}\label{l:matdef}
  Let $a, b \in \{1, 2, \ldots, n - 1\}$ be distinct. Then the permutation
 $ %  \[
  (0, a, b) \cdot \tau_n
 $ %  \]
  is an $n$-cycle if and only if $a < b$.
\end{lem}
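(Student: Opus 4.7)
The plan is to analyze the permutation $\sigma = (0,a,b)\cdot\tau_n$ directly by identifying its action pointwise and then tracing its orbits. Since the paper composes left to right, $\sigma(x) = \tau_n\bigl((0,a,b)(x)\bigr)$. Thus $\sigma$ agrees with the increment $x \mapsto x+1 \pmod n$ on every $x \notin \{0,a,b\}$, while on the three exceptional points we have
\[
  \sigma(0) = a+1, \qquad \sigma(a) = b+1, \qquad \sigma(b) = 1.
\]
So $\sigma$ is the shift $\tau_n$ perturbed only at the jumps out of $0$, $a$, and $b$, and the cycle structure is governed entirely by how these three jumps interleave with the increment.

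First I would handle the case $a < b$. Starting at $1$ and iterating $\sigma$, we increment until we hit $a$, which jumps to $b+1$; then we increment through $b+2, \dots, n-1$; then from $n-1$ we increment to $0$, which jumps to $a+1$; then we increment through $a+2,\dots,b-1$ until we reach $b$, which jumps back to $1$. Counting the elements visited gives
\[
  a + (n-1-b) + 1 + (b-1-a) + 1 = n,
\]
so the orbit of $1$ is all of $\{0,1,\dots,n-1\}$ and $\sigma$ is an $n$-cycle. I would separate out and check the degenerate configurations ($a=1$, $b=n-1$, or both) to confirm that the trace above still goes through without skipping or repeating elements.

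For the converse, in the case $a > b$, I would again trace the orbit of $1$: since $b \ge 1$ and $b < a$, incrementing from $1$ reaches $b$ before reaching $a$, and $\sigma(b)=1$ closes the cycle $(1,2,\dots,b)$ of length $b < n$. This already shows $\sigma$ is not an $n$-cycle. For completeness one may verify that the remaining two cycles are $(b+1,b+2,\dots,a)$ of length $a-b$ (closed by $\sigma(a)=b+1$) and $(0,a+1,a+2,\dots,n-1)$ of length $n-a$ (closed by $\sigma(0)=a+1$), which together with the $b$-cycle partition $\{0,\dots,n-1\}$. The main obstacle is purely the bookkeeping of boundary cases ($b=a-1$, $a=n-1$, etc.); the underlying argument in either direction is just following a single orbit and counting.
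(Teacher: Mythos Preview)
Your proposal is correct and takes essentially the same approach as the paper: both compute the pointwise action of $(0,a,b)\cdot\tau_n$ and trace out the resulting cycle(s) explicitly. The paper simply writes down the full $n$-cycle starting at $0$ when $a<b$ and exhibits the short cycle $(0,a+1,\ldots,n-1)$ when $a>b$, whereas you trace from $1$ and give more detail on boundary cases; these are cosmetic differences only.
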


\begin{proof}
  Suppose that $a < b$. Then we have that
  \begin{equation*}
    (0, a, b) \cdot \tau_n = (0, a + 1, a + 2, \ldots, b, 1, 2, \ldots, a, b + 1, b + 2, \ldots, n - 1),
  \end{equation*}
  which is an $n$-cycle. Alternatively, if $a > b$ then
 $ %  \[
  (0, a + 1, a + 2, \ldots, n - 1)
 $ % \]
  is a cycle in $(0, a, b) \cdot \tau_n$ of length less than $n$, so $(0, a, b) \cdot \tau_n$ is not an $n$-cycle.
\end{proof}

Let $T_n \subseteq \mathcal{S}_n$ denote the set of transpositions in $\mathcal{S}_n$. Then we define a map $\wedge : T_n \times T_n \to \Z_2$ via
\begin{equation*}
  \alpha \wedge \beta = \begin{cases}
    1 & \text{if } \alpha \cdot \beta \cdot \tau_n \text{ is an } n \text{-cycle}, \\
    0 & \text{otherwise}.
  \end{cases}
\end{equation*}
Using this operation, we can assign a matrix to any permutation, which contains information about the cycle structure of that permutation.
\begin{defin}\label{d:linkmat}
  Let $\sigma_1, \ldots, \sigma_m \in T_n$. We define the $m\times m$ \emph{link relation matrix} $L(\sigma_1, \sigma_2, \ldots, \sigma_m)$ of the transpositions $\sigma_1, \sigma_2, \ldots, \sigma_m$ by
  \begin{equation*}
    L(\sigma_1, \sigma_2, \ldots, \sigma_m)_{j, i} = L(\sigma_1, \sigma_2, \ldots, \sigma_m)_{i, j} = \sigma_i \wedge \sigma_j,
  \end{equation*}
  for $1\le i < j\le m$ and $L(\sigma_1, \sigma_2, \ldots, \sigma_m)_{i, i} = 0$ for $i \in \{1, 2, \ldots, m\}$.
\end{defin}

We call a matrix \emph{hollow} if every entry of its main diagonal is $0$. It is clear from this definition that $L(\sigma_1, \sigma_2, \ldots, \sigma_m)$ is a symmetric, hollow matrix over $\Z_2$. The following is a special case of a theorem due to Beck~\cite{MR442065}.
\begin{thm}\label{t:linkmatcyc}
  The permutation $\sigma_1 \cdot \sigma_2 \cdots \sigma_m \cdot \tau_n$ is an $n$-cycle if and only if $L(\sigma_1, \sigma_2, \ldots, \sigma_m)$ is non-singular over $\Z_2$.
\end{thm}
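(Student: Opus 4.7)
The plan is to derive Theorem~\ref{t:linkmatcyc} from the sharper quantitative identity
\[
  c(\pi_m) \;=\; 1 + m - \operatorname{rank}_{\Z_2}\bigl(L(\sigma_1,\sigma_2,\ldots,\sigma_m)\bigr),
\]
where $\pi_m := \sigma_1\cdot\sigma_2\cdots\sigma_m\cdot\tau_n$ and $c(\pi)$ denotes the number of cycles of $\pi$ (including fixed points). This is the form of the result actually due to Beck, and once it is established the theorem drops out immediately: $\pi_m$ is an $n$-cycle if and only if $c(\pi_m)=1$, if and only if $\operatorname{rank}(L)=m$, if and only if $L$ is non-singular over $\Z_2$.

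I would prove the identity by induction on $m$. The cases $m\leq 2$ can be checked by hand: for $m=0$ the product is $\tau_n$ and the matrix is empty of rank $0$; for $m=1$ a single transposition splits $\tau_n$ into exactly two cycles while the $1\times1$ link matrix has rank $0$; and for $m=2$ a short parity argument shows $c(\pi_2)\in\{1,3\}$, so the dichotomy between $\operatorname{rank}(L)\in\{0,2\}$ reduces at once to the definition of $\wedge$.

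For the inductive step, factor $\pi_m = \pi_{m-1}\cdot(\tau_n^{-1}\sigma_m\tau_n)$, so that passing from $\pi_{m-1}$ to $\pi_m$ is right-multiplication by a single transposition $\sigma_m'$. The classical cycle-tracking fact then gives $c(\pi_m)-c(\pi_{m-1})=+1$ when the two points moved by $\sigma_m'$ lie in the same cycle of $\pi_{m-1}$, and $-1$ otherwise. On the matrix side, $L(\sigma_1,\ldots,\sigma_m)$ is obtained from $L(\sigma_1,\ldots,\sigma_{m-1})$ by bordering with one new row and one new column. Since link relation matrices are symmetric with zero diagonal over $\Z_2$, they are matrices of alternating bilinear forms, so their rank is even; bordering therefore changes the rank by $0$ or $2$, never by $1$. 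The inductive step thus reduces to matching the two binary alternatives: rank-change $0$ with a cycle splitting (so $c$ goes up by $1$), and rank-change $2$ with two cycles merging (so $c$ goes down by $1$).

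The crux, and the main obstacle, is this matching. Concretely, it requires showing that the new column $v=(\sigma_1\wedge\sigma_m,\ldots,\sigma_{m-1}\wedge\sigma_m)^{\top}$ lies in the column span of $L(\sigma_1,\ldots,\sigma_{m-1})$ if and only if the two points moved by $\sigma_m'$ lie in a common cycle of $\pi_{m-1}$. My approach would be geometric: view each $\sigma_i$ as a chord joining two of $n$ points arranged cyclically by $\tau_n$, interpret the entries $\sigma_i\wedge\sigma_j$ as pairwise interaction data between chords (with Lemma~\ref{l:matdef} handling the subcase in which two chords share the vertex $0$), and trace the faces of the resulting one-vertex ribbon graph. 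The face count of that embedded graph is exactly $c(\pi_m)$, and an Euler-characteristic bookkeeping argument on the underlying orientable surface converts a $\Z_2$-linear dependency among the columns of $L(\sigma_1,\ldots,\sigma_m)$ into the existence of a common cycle in $\pi_{m-1}$. This is the Cohn--Lempel and Beck bridge between combinatorics and linear algebra over $\Z_2$; once it is in place, the induction closes and the theorem follows.
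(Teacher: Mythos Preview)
The paper does not prove Theorem~\ref{t:linkmatcyc} at all. It is introduced with the sentence ``The following is a special case of a theorem due to Beck~\cite{MR442065}'' and is quoted without argument from the literature, so there is no paper-side proof to compare your proposal against.

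Your outline is essentially a sketch of how Beck's theorem is established, and the quantitative identity $c(\pi_m)=1+m-\operatorname{rank}_{\Z_2}(L)$ you aim for is the correct strengthening. The parity observations (alternating forms over $\Z_2$ have even rank; bordering by a symmetric row--column pair with zero corner changes the rank by $0$ or $2$), the factorisation $\pi_m=\pi_{m-1}\cdot(\tau_n^{-1}\sigma_m\tau_n)$, and the cycle-splitting/merging dichotomy under right-multiplication by a transposition are all sound, and your base cases $m\le2$ check out. Where the proposal remains a sketch rather than a proof is exactly the step you flag as the ``crux'': that the new border column lies in the column span of $L(\sigma_1,\ldots,\sigma_{m-1})$ if and only if the two points moved by $\sigma_m'$ share a cycle of $\pi_{m-1}$. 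Your ribbon-graph/Euler-characteristic description is the right intuition, but as written it is an appeal to the Cohn--Lempel/Beck machinery rather than an independent verification; carrying it through requires either a careful induction on the interlacement data or the full topological bookkeeping, neither of which you actually perform. Since the paper simply cites the result, your proposal already goes further than the paper does, but it is not yet a self-contained proof.
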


\subsubsection{Defining link relation matrices}\label{ss:3mod8}

In this subsection, we will construct the link relation matrix for the permutation $r_{0, 1}$ of $\mathscr{L}_p$ for primes $p \equiv 1 \bmod 8$ or $p \equiv 3 \bmod 8$. We also construct the link relation matrix for the permutation $r_{0, 1}$ of the $p \times p$ Latin square $\mathcal{L}[2, -1]$ when $p \equiv 1 \bmod 8$. First, consider the permutation $r_{0, 1}$ of $\mathscr{L}_p$. Let $f_p$ denote the quadratic orthomorphism $\varphi[{-1, 2}]$ over $\syms$. Then we have that
\begin{equation*}
  (\mathscr{L}_p)_{i, j} = i + f_p(j - i).
\end{equation*}
So the permutation $r_{0, 1}$ of $\mathscr{L}_p$ satisfies $r_{0, 1}(j) = f_p(f_p^{-1}(j) - 1) + 1$. Let 
$\phi_p$ denote the permutation $r_{0, 1}$ of $\mathscr{L}_p$. Then $\phi_p : \Z_p \to \Z_p$ is defined by
 $ %\begin{equation*}
  \phi_p = f_p^{-1} \cdot \tau_p^{-1} \cdot f_p \cdot \tau_p.
 $ %\end{equation*}
We wish to show the following conjecture, which appeared in~\cite{MR2563050}.

\begin{conj}
  For any prime $p \equiv 1 \bmod 8$ or $p \equiv 3 \bmod 8$, the permutation $\phi_p$ is a $p$-cycle.
\end{conj}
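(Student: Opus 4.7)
My plan is to convert the question into a linear algebra problem via Beck's theorem (\tref{t:linkmatcyc}): I will write $\phi_p$ in the form $\sigma_1\cdots\sigma_m\cdot\tau_p$ for explicit transpositions $\sigma_i$, and then prove that the resulting link relation matrix is nonsingular over $\Z_2$.

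For the decomposition, a direct check gives $\tau_p^{-1}=(0,p-1)(0,p-2)\cdots(0,1)$ in left-to-right composition, while the conjugation identity $g^{-1}(0,k)g=(0,g(k))$ holds whenever $g$ fixes $0$. Inserting $f_p\cdot f_p^{-1}=\Id$ between consecutive factors in $\phi_p\cdot\tau_p^{-1}=f_p^{-1}\tau_p^{-1}f_p$ then yields
\[
\phi_p \;=\; \alpha_1\alpha_2\cdots\alpha_{p-1}\cdot\tau_p,\qquad \alpha_i:=\bigl(0,\,f_p(p-i)\bigr),
\]
so by \tref{t:linkmatcyc} it suffices to show that $M:=L(\alpha_1,\ldots,\alpha_{p-1})$ is nonsingular over $\Z_2$. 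Because each $\alpha_i$ contains $0$, one computes $\alpha_i\alpha_j=(0,f_p(p-i),f_p(p-j))$, and then \lref{l:matdef}, together with the reindexing $a=p-i$, identifies $M$ with the \emph{inversion indicator matrix} of the restriction $f_p|_{\{1,\ldots,p-1\}}$: the $(a,b)$-entry equals $1$ precisely when $a\neq b$ and $f_p$ reverses the relative order of $\{a,b\}$. Here $f_p$ acts as $x\mapsto-x$ on $\Rp_p$ and as $x\mapsto 2x$ on $\Np_p$; for $p\equiv1\bmod 8$ this stabilises the partition $\{1,\ldots,p-1\}=\Rp_p\sqcup\Np_p$, whereas for $p\equiv3\bmod 8$ it interchanges the two parts.

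The main obstacle is proving that $M$ is nonsingular, and my strategy is to exploit the $2\times 2$ block decomposition of $M$ induced by $\Rp_p\sqcup\Np_p$. The diagonal blocks are themselves inversion indicator matrices of the two order-reversing affine restrictions $x\mapsto-x$ on $\Rp_p$ and $x\mapsto 2x$ on $\Np_p$, each of which is structurally simple and can be analysed directly. The off-diagonal blocks encode the pairs $(r,n)\in\Rp_p\times\Np_p$ on which the rules $r\mapsto-r$ and $n\mapsto 2n$ disagree about relative order, and their $\Z_2$-entries can be controlled in terms of the distribution of quadratic residues across the intervals $\{1,\ldots,(p-1)/2\}$ and $\{(p+1)/2,\ldots,p-1\}$. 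I anticipate that a carefully chosen sequence of elementary row and column operations, designed to cancel contributions from the natural pairings $y\leftrightarrow-y$ and $y\leftrightarrow y/2$, will reduce $M$ to a matrix that is manifestly nonsingular (for instance, block-triangular with invertible diagonal blocks). The companion assertion that the row permutation $r_{0,1}$ of $\mathcal{L}[2,-1]$ is a $p$-cycle when $p\equiv1\bmod 8$ should follow from the same framework applied to the orthomorphism $\varphi[2,-1]$, whose behaviour on $\Rp_p$ and $\Np_p$ is dual to that of $f_p$.
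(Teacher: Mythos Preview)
Your decomposition $\phi_p=\sigma_1\cdots\sigma_{p-1}\tau_p$ and the identification of the link matrix as the inversion indicator of $f_p|_{\Z_p^*}$ are correct and coincide exactly with what the paper sets up. The gap is entirely in the nonsingularity step. First, the map $x\mapsto2x$ is \emph{not} order-reversing: on $\{1,\dots,p-1\}$ it preserves order within each of the halves $\{1,\dots,h_p\}$ and $\{h_p{+}1,\dots,p-1\}$ and creates inversions only for some cross pairs. Second, and more damagingly, the $\Np_p$-diagonal block need not be invertible over $\Z_2$. For $p=17$ (where $\Np_p=\{3,5,6,7,10,11,12,14\}$) this block has the shape $\bigl(\begin{smallmatrix}0&A\\A^{T}&0\end{smallmatrix}\bigr)$ with $A=\bigl(\begin{smallmatrix}1&1&0&0\\1&1&1&0\\1&1&1&1\\1&1&1&1\end{smallmatrix}\bigr)$, whose last two rows coincide; the block is singular. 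So no sequence of elementary operations can make $M$ block-triangular with invertible diagonal blocks along the residue/non-residue split, and the ``I anticipate'' sentence carries no actual argument.

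The paper's proof abandons the $\Rp_p/\Np_p$ block structure and instead manipulates an evolving index set $I\subseteq\Z_p^*$. It introduces two determinant-preserving ``reductions'': a Type~One reduction deletes a pair $i,i{+}1$ of consecutive $\N$-elements (their rows of $\C_I$ agree off columns $i,i{+}1$), and a Type~Two reduction deletes an $\M$-element together with the unique member of $X(I,i,j)=\{k\in I:a_j\prec a_k\prec a_i\}$ when $|X(I,i,j)|=1$. The key idea you are missing is a parity invariant (\lref{l:eqrowss}): for adjacent $i,j\in I\cap\M$ on the same side of $h_p$, the cardinality $|X(I,i,j)|$ stays odd under both reduction types. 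This is what guarantees that, after Type~One moves have cleared consecutive $\N$-pairs, a legal Type~Two move is always available; three explicit algorithms then drive $I$ down to a two-element set with $\C_I=\bigl(\begin{smallmatrix}0&1\\1&0\end{smallmatrix}\bigr)$. The pairings $y\leftrightarrow-y$ and $y\leftrightarrow y/2$ you propose do appear, but only inside this reduction machinery (they locate the pivots $\gamma_1,\gamma_2$ in the second algorithm), not as a route to block-triangularisation.
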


Before constructing the link relation matrix for the permutation $\phi_p$, we will need to define some notation.
For an integer $k$, we define $q(k)$ to be the least non-negative integer $x$ such that $x\equiv k\bmod p$.
We will need to use inequalities in $\Z_p$, which will be denoted by $\prec, \preccurlyeq, \succ$ and $\succcurlyeq$. To apply the inequality $a \prec b$ for elements $a, b \in \Z_p$, we simply apply the inequality $q(a) < q(b)$, where $<$ is an inequality in the integers. We apply the inequalities $\preccurlyeq$, $\succ$ and $\succcurlyeq$ in the same way.

We can now work towards constructing the link relation matrix for $\phi_p$. Define the map $\psi_p : \Z_p \to \Z_p$ by
\begin{equation*}
  \psi_p = f_p^{-1} \cdot \tau_p^{-1} \cdot f_p = (0, f_p(-1), f_p(-2), \ldots, f_p(1)),
\end{equation*}
so that $\phi_p = \psi_p \cdot \tau_p$. For $i \in \Z_p^*$ let $x_i = f_p(-i)$. When $p \equiv 3 \bmod 8$ we have,
\[
x_i=\begin{cases}
	-2i & \text{if } i \in \Rp_p, \\
	i & \text{if } i \in \Np_p.
\end{cases}
\]
When $p \equiv 1 \bmod 8$ we have,
\[
x_i = \begin{cases}
	i & \text{if } i \in \Rp_p, \\
	-2i & \text{if } i \in \Np_p.
\end{cases}
\]
Denote the transposition $(0, x_i) \in \mathcal{S}_p$ by $\sigma_i$.
Then we can write
\begin{equation*}
  \phi_p = \psi_p\cdot\tau_p = \left(\displaystyle\prod_{i = 1}^{p - 1} \sigma_i\right) \cdot \tau_p.
\end{equation*}
So we have written $\phi_p$ in the form required to use \tref{t:linkmatcyc}. We can now write the link relation matrix $L(\sigma_1, \sigma_2, \ldots, \sigma_{p - 1})$ in another form.

\begin{lem}\label{l:linkmatdef}
  The link relation matrix $L(\sigma_1, \sigma_2, \ldots, \sigma_{p - 1})$ is the $(p-1) \times (p-1)$ matrix given by
  \begin{equation}\label{e:link}
    L(\sigma_1, \sigma_2, \ldots, \sigma_{p - 1})_{j, i} = L(\sigma_1, \sigma_2, \ldots, \sigma_{p - 1})_{i, j} = \begin{cases}
      1 & \text{if } x_i \prec x_j, \\
      0 & \text{if } x_i \succcurlyeq x_j,
    \end{cases}
  \end{equation}
  for $i, j \in \Z_p^*$ with $i \preccurlyeq j$.
\end{lem}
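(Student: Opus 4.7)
The plan is to reduce the entry $\sigma_i \wedge \sigma_j$ directly to \lref{l:matdef} via a short transposition calculation. As preparation, I would note that $f_p$ is an orthomorphism of $\Z_p$ with $f_p(0) = 0$, so the map $i \mapsto f_p(-i)$ restricts to a bijection of $\Z_p^*$. Consequently, the values $x_1, x_2, \ldots, x_{p-1}$ are a permutation of $\Z_p^*$; in particular they are pairwise distinct and nonzero, which means each $\sigma_i = (0, x_i)$ is a genuine transposition in $\mathcal{S}_p$ and all the $\sigma_i$ are distinct.

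Next, for distinct $i, j \in \Z_p^*$, I would expand $\sigma_i \cdot \sigma_j = (0, x_i) \cdot (0, x_j)$. Tracing the left-to-right action on the three distinct points $0, x_i, x_j$ gives $0 \mapsto x_i \mapsto x_j \mapsto 0$ while every other element is fixed, so $\sigma_i \cdot \sigma_j = (0, x_i, x_j)$. Applying \lref{l:matdef} with $n = p$, $a = x_i$, $b = x_j$, and identifying $\Z_p$ with $\{0, 1, \ldots, p - 1\}$ through the representative function $q$, the permutation $\sigma_i \cdot \sigma_j \cdot \tau_p$ is a $p$-cycle precisely when $x_i \prec x_j$. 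Hence $\sigma_i \wedge \sigma_j = 1$ when $x_i \prec x_j$ and $\sigma_i \wedge \sigma_j = 0$ when $x_i \succ x_j$ (equality being impossible).

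For the diagonal $i = j$, the value is declared to be $0$ by \dref{d:linkmat}, which agrees with \eqref{e:link} since trivially $x_i \succcurlyeq x_i$. Combined with the symmetry of $L$ noted right after \dref{d:linkmat}, this yields \eqref{e:link} in its stated form. The argument is essentially mechanical; the only points requiring care are the left-to-right composition convention (so that $\sigma_i \cdot \sigma_j$ becomes $(0, x_i, x_j)$ rather than $(0, x_j, x_i)$) and matching the paper's $\prec$ on $\Z_p$ with the integer inequality used in \lref{l:matdef}. There is no serious obstacle.
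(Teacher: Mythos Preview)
Your proposal is correct and follows essentially the same approach as the paper: compute $\sigma_i\cdot\sigma_j=(0,x_i,x_j)$ and apply \lref{l:matdef}, then handle the diagonal via \dref{d:linkmat}. You include a bit more justification (distinctness of the $x_i$ and the explicit cycle computation) than the paper does, but the argument is the same.
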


\begin{proof}
  Let $i, j \in \Z_p^*$ with $i \prec j$. By \dref{d:linkmat}, we have that
  \[
  L(\sigma_1, \sigma_2, \ldots, \sigma_{p - 1})_{j, i} = L(\sigma_1, \sigma_2, \ldots, \sigma_{p - 1})_{i, j} = \begin{cases}
    1 & \text{if } \sigma_i \cdot \sigma_j \cdot \tau_p \text{ is a } p \text{ cycle}, \\
    0 & \text{otherwise}.
  \end{cases}
  \]
  Also
 $ % \[
  \sigma_i \cdot \sigma_j \cdot \tau_p = (0, x_i, x_j) \cdot \tau_p,
 $ % \]
  which is a $p$-cycle if and only if $x_i \prec x_j$, by \lref{l:matdef}. So it follows that \eref{e:link} holds for all $i \prec j$. Also, from \dref{d:linkmat} we have that $L(\sigma_1, \sigma_2, \ldots, \sigma_{p - 1})_{i, i} = 0$, which agrees with \eref{e:link} because $x_i \succcurlyeq x_i$ for all $i \in \Z_p^*$.
\end{proof}

When $p \equiv 1 \bmod 8$ we are also interested in the link relation matrix for the permutation $r_{0, 1}$ of the $p \times p$ Latin square $\mathcal{L}[2, -1]$. 
Define
\[
y_i = \begin{cases}
  -2i & \text{if } i \in \Rp_p, \\
  i & \text{if } i \in \Np_p,
\end{cases}
\]
and let $\rho_i = (0, y_i)$. Then by using similar arguments as used to construct the link relation matrix $L(\sigma_1, \sigma_2, \ldots, \sigma_{p-1})$ we can prove that the link relation matrix $L(\rho_1, \rho_2, \ldots, \rho_{p-1})$ for the permutation $r_{0, 1}$ of $\mathcal{L}[2, -1]$ is defined by
\[
L(\rho_1, \rho_2, \ldots, \rho_{p - 1})_{j, i} = L(\rho_1, \rho_2, \ldots, \rho_{p - 1})_{i, j} = \begin{cases}
  1 & \text{if } y_i \prec y_j, \\
  0 & \text{if } y_i \succcurlyeq y_j,
\end{cases}
\]
for $i, j \in \Z_p^*$ with $i \preccurlyeq j$.

\subsubsection{Proving that the link relation matrices are non-singular}\label{ss:ap}

In this subsection we will prove that the link relation matrices constructed in \sref{ss:3mod8} are non-singular over $\Z_2$. Let $p \equiv 1 \bmod 8$ or $p \equiv 3 \bmod 8$ be a prime. We will assume that $p>3$ because some parts of the proof require this. Let $\M \subseteq \Z_p^*$ and let $\N = \Z_p^* \setminus \M$. For $i \in \Z_p^*$ define
\[
a_i = \begin{cases}
  -2i & \text{if } i \in \M\!, \\
  i & \text{if } i \in \N.
\end{cases}
\]
Define a matrix $\C$ by
\[
(\C)_{j, i} = (\C)_{i, j} = \begin{cases}
  1 & \text{if } a_i \prec a_j, \\
  0 & \text{if } a_i \succcurlyeq a_j,
\end{cases}
\]
for $i, j \in \Z_p^*$ with $i \preccurlyeq j$. From now on we will be treating the following three cases:\\[0.5ex]
Case 1: $p \equiv 3 \bmod 8$, $\M = \Rp_p$ and $\N = \Np_p$ where $\C = L(\sigma_1, \sigma_2, \ldots, \sigma_{p-1})$.\\[0.5ex]
Case 2: $p \equiv 1 \bmod 8$, $\M = \Rp_p$ and $\N = \Np_p$ where $\C = L(\rho_1, \rho_2, \ldots, \rho_{p-1})$.\\[0.5ex]
Case 3: $p \equiv 1 \bmod 8$, $\M = \Np_p$ and $\N = \Rp_p$ where $\C = L(\sigma_1, \sigma_2, \ldots, \sigma_{p-1})$.

\medskip

Combining \lref{l:altsufconrh} and \tref{t:linkmatcyc}, we have:

\begin{lem}\label{l:matrh}
  Let $p>3$ be prime with $p \equiv 1 \bmod 8$ or $p \equiv 3 \bmod 8$. To show that $\mathscr{L}_p$ is row-Hamiltonian when $p \equiv 3 \bmod 8$, it suffices to show that the matrix $\C$ is non-singular over $\Z_2$ in Case $1$. To show that $\mathscr{L}_p$ is row-Hamiltonian when $p \equiv 1 \bmod 8$, it suffices to show that the matrix $\C$ is non-singular over $\Z_2$ in Case $2$ and Case $3$. 
\end{lem}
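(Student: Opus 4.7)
The plan is to assemble Lemma \ref{l:altsufconrh}, Theorem \ref{t:linkmatcyc}, and the explicit descriptions of the link relation matrices from \sref{ss:3mod8}. First I would check that the hypothesis of Lemma \ref{l:altsufconrh} is actually met for the pair $(a,b) = (-1,2)$: both $ab$ and $(a-1)(b-1)$ equal $-2$, and a standard computation with the Legendre symbols of $-1$ and $2$ shows that $-2 \in \Rp_p$ precisely when $p \equiv 1 \bmod 8$ or $p \equiv 3 \bmod 8$. So Lemma \ref{l:altsufconrh} applies in both congruence classes under consideration, and the pair $(2,-1)$ satisfies the same hypothesis by symmetry.

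For $p \equiv 3 \bmod 8$ we have $p \equiv 3 \bmod 4$, so part (i) of Lemma \ref{l:altsufconrh} tells us that $\mathscr{L}_p = \mathcal{L}[-1,2]$ is row-Hamiltonian as soon as the single permutation $\phi_p$ is a $p$-cycle. The factorisation $\phi_p = \bigl(\prod_{i=1}^{p-1} \sigma_i\bigr)\cdot\tau_p$ derived in \sref{ss:3mod8} puts $\phi_p$ into the form demanded by Theorem \ref{t:linkmatcyc}, which therefore reduces the question to the non-singularity of $L(\sigma_1,\ldots,\sigma_{p-1})$ over $\Z_2$. Comparing the formula for $x_i$ when $p \equiv 3 \bmod 8$ with the definition of $a_i$ when $\M = \Rp_p$ shows that this link relation matrix coincides with $\C$ in Case 1, which is the first assertion of the lemma.

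For $p \equiv 1 \bmod 8$ we instead invoke part (ii) of Lemma \ref{l:altsufconrh}, which demands that both $r_{0,1}$ of $\mathcal{L}[-1,2]$ and $r_{0,1}$ of $\mathcal{L}[2,-1]$ be $p$-cycles. Theorem \ref{t:linkmatcyc} converts each of these requirements into the non-singularity of a link relation matrix over $\Z_2$. For $\mathcal{L}[-1,2]$ the relevant matrix is $L(\sigma_1,\ldots,\sigma_{p-1})$, and matching the $p \equiv 1 \bmod 8$ formula for $x_i$ against $a_i$ with $\M = \Np_p$ identifies it with $\C$ in Case 3; for $\mathcal{L}[2,-1]$ the relevant matrix is $L(\rho_1,\ldots,\rho_{p-1})$, and an analogous matching with $\M = \Rp_p$ identifies it with $\C$ in Case 2. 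The two required non-singularity statements therefore coincide with Cases 2 and 3 of the lemma.

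There is no real obstacle here: the proof is purely bookkeeping, threading together results already in place. The only substantive verification is the quadratic residue status of $-2$, which is precisely what singles out the two residue classes modulo $8$ appearing in the hypothesis.
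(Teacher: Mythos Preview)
Your proposal is correct and follows exactly the paper's approach: the paper's own proof of this lemma is the single sentence ``Combining \lref{l:altsufconrh} and \tref{t:linkmatcyc}, we have,'' and you have simply spelled out that combination carefully, including the case-matching between the $x_i$, $y_i$ and $a_i$ that the paper leaves implicit. The verification that $-2\in\Rp_p$ for the relevant residue classes is also taken for granted in the paper (it underlies the very definition of $\mathscr{L}_p$), so your inclusion of it is appropriate extra care rather than a deviation.
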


We will show that $\C$ is non-singular in all three cases, using a single argument for which only minor details change between the different cases. We start by defining
 $ %\[
\kappa_p = \min\{q(i) : i \in \M\}.
 $ %\]
It is clear in Case $1$ and Case $2$ that $\kappa_p = 1$. However in Case $3$ we know that $\kappa_p$ is the least quadratic non-residue modulo $p$. Note that $\kappa_p$ is also odd in this case because it must be prime, and $2\in \Rp_p$.
We will also need to use the fact that $\kappa_p <p / 5$. This is obviously true in Case $1$ and Case $2$. In Case $3$ when $p = 17$ we know that $\kappa_p = 3 < 17/5$. Also, $\kappa_p < p/5$ for all other primes $p \equiv 1 \bmod 8$ as a result of~\cite[Theorem $2$]{MR3095735}. 

From now on it will be convenient to identify $\Z_p^*$ with the set of
integers $\{1,2,\dots,p-1\}$, so that $q(i)=i$ for all $i\in\Z_p^*$.
It then follows that for $a, b \in \Z_p^*$ the inequalities $a \prec b$, $a \preccurlyeq b$, $a \succ b$ and $a \succcurlyeq b$ are simply $a < b$, $a \leq b$, $a > b$ and $a \geq b$ respectively.

We will often be using the quantity $h_p=(p-1)/2$ throughout this subsection. The following is a simple result which we will use frequently.

\begin{lem}\label{l:abineq}
  Let $a, b \in \Z_p^*$ with $a < b$. Then $-2b \prec -2a$ if and only if $a < b \leq h_p$, $b > a > h_p$ or $b - a > h_p$.
\end{lem}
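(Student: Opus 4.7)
The plan is to dispatch this via a direct case analysis, computing $q(-2a)$ and $q(-2b)$ explicitly and comparing them. The preliminary observation is that, under the identification $\Z_p^*=\{1,\dots,p-1\}$, for any $c\in\Z_p^*$ one has
\[
q(-2c)=\begin{cases} p-2c & \text{if } c\le h_p, \\ 2p-2c & \text{if } c>h_p,\end{cases}
\]
because $2c$ wraps past $p$ exactly once precisely when $c>h_p$.

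With that formula in hand, I would split into three cases based on which side of $h_p$ the values $a$ and $b$ lie on. When $a<b\le h_p$ or $h_p<a<b$, both $q(-2a)$ and $q(-2b)$ share a common form, and the difference $q(-2b)-q(-2a)=2(a-b)$ is automatically negative, so $-2b\prec -2a$ holds. The only non-trivial situation is the mixed case $a\le h_p<b$, where $q(-2b)-q(-2a)=(2p-2b)-(p-2a)=p-2(b-a)$; this is negative if and only if $b-a>p/2$, and since $b-a$ is an integer and $p$ is odd, this is equivalent to $b-a\ge h_p+1$, i.e., $b-a>h_p$.

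To finish, I would observe that the three conditions in the lemma are mutually exclusive: in the first two cases $b-a\le h_p-1$, so the inequality $b-a>h_p$ can only be realised in the mixed case. Hence the disjunction captures exactly when $-2b\prec -2a$, proving both directions simultaneously. There is no real obstacle here; the proof amounts to careful bookkeeping of how doubling interacts with reduction modulo $p$, and the apparent asymmetry of the three listed conditions is simply the shape of the answer once the three cases are sorted out.
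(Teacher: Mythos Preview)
Your proposal is correct and follows essentially the same approach as the paper's proof: the same explicit formula for $q(-2c)$ depending on whether $c\le h_p$ or $c>h_p$, followed by the same three-way case split, with the mixed case $a\le h_p<b$ being the only non-trivial one. Your added remark on mutual exclusivity of the three conditions is a minor elaboration but not essential, since in the mixed case neither of the first two disjuncts can hold anyway.
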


\begin{proof}
  If $a < b \leq h_p$ then $1 < 2a < 2b \leq p - 1$ and so $q(-2a) = p - 2a>p-2b=q(-2b)$. Alternatively, if $b > a > h_p$ then $q(-2a) = 2p - 2a>2p-2b=q(-2b)$. In either case, $-2b \prec -2a$. It remains to consider the case when $a\le h_p<b$, so that $q(-2a)= p - 2a$ and $q(-2b) = 2p - 2b$. In this case,
  \[
  b - a > h_p \;\Longleftrightarrow\; 2b - 2a \geq p + 1
  \;\Longleftrightarrow\; p - 2a \geq (2p - 2b) + 1 \;\Longleftrightarrow\; -2a \succ -2b.
  \]
  The result follows.
\end{proof}

For a non-empty set $X \subseteq \Z_p$ we define $\min(X)$ to be the unique element $x \in X$ such that $x \preccurlyeq y$ for all $y \in X$. We define $\max(X)$ similarly. We will now work towards showing that $\C$ is non-singular over $\Z_2$ by showing some properties of $\C$. In some situations, the results will depend on whether we are considering Case $1$, Case $2$ or Case $3$. In these situations we will specify which case(s) we are considering. If we do not specify a particular case, then it is understood that the result applies to all three cases.

\begin{defin}
  Let $A$ be a matrix and let $\alpha, \beta$ be subsets of its set of
  row and column indices, respectively. We define $A[\alpha, \beta]$
  to be the $|\alpha| \times |\beta|$ submatrix of $A$ induced by the
  rows in $\alpha$ and columns in $\beta$. If $\alpha = \beta$ then we
  denote the \emph{principal submatrix} $A[\alpha, \alpha]$ by
  $A_\alpha$.
\end{defin}

We note the following easy observation about principal submatrices of $\C$.

\begin{lem}\label{l:princsh}
  Let $I \subseteq \Z_p^*$. Then $\C_I$ is hollow and symmetric.
\end{lem}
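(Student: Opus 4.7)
The plan is to observe that both properties are inherited directly from the ambient matrix $\C$, and that being hollow and symmetric is preserved under taking principal submatrices. So the proof reduces to checking that $\C$ itself is hollow and symmetric, both of which are essentially immediate from the definition of $\C$ given just before \lref{l:matrh}.

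First I would note symmetry: the definition explicitly sets $(\C)_{j,i} = (\C)_{i,j}$ for $i \preccurlyeq j$, which is precisely the statement that $\C$ is symmetric. For hollowness, I would take any $i \in \Z_p^*$ and observe that $a_i \succcurlyeq a_i$ trivially (since $a_i = a_i$), so by the defining case split we get $(\C)_{i,i} = 0$. Thus the entire main diagonal of $\C$ is zero.

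Finally I would invoke the obvious fact about principal submatrices: if $A$ is symmetric then so is $A[\alpha, \alpha]$, because $A[\alpha,\alpha]_{i,j} = A_{i,j} = A_{j,i} = A[\alpha,\alpha]_{j,i}$ for $i, j \in \alpha$; and if $A$ is hollow then so is $A[\alpha, \alpha]$, because its diagonal entries are a subset of the diagonal entries of $A$. Applying this to $A = \C$ and $\alpha = I$ yields the claim.

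There is no real obstacle here; the lemma is essentially a bookkeeping statement recording structural properties that will be used repeatedly in later arguments about principal submatrices of $\C$. The only thing to be careful about is that the defining formula for $\C$ is stated only for pairs with $i \preccurlyeq j$, so one must check both the off-diagonal symmetry and the diagonal case separately, as above.
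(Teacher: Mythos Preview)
Your proposal is correct and matches the paper's approach: the paper simply states this lemma as an ``easy observation'' with no proof given, and your argument supplies exactly the obvious details one would expect (symmetry and hollowness of $\C$ from its definition, together with the fact that both properties pass to principal submatrices).
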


We will next show two more interesting lemmas about principal submatrices of $\C$. First, we need the following definitions.

\begin{defin}
  Let $I \subseteq \Z_p^*$ and let $i, j \in I$ with $i \prec j$. We call $i$ and $j$ \emph{adjacent} in $I$ if there is no $k \in I$ such that $i \prec k \prec j$.
\end{defin}

\begin{defin}\label{d:diff}
  Let $I \subseteq \Z_p^*$ and let $i, j \in I$ with $a_i \prec a_j$. We define
  \[
  X(I, i, j) = X(I, j, i) = \{k \in I : a_i \prec a_k \prec a_j\}.
  \]
\end{defin}

\begin{lem}\label{l:red2}
  Let $I \subseteq \Z_p^*$ and suppose $\{i, i+1\}\subseteq I \cap \N$. Then $(\C_I)_{i, k} = (\C_I)_{i+1, k}$ if and only if $k \not\in \{i, i+1\}$. Furthermore $(\C_I)_{i, i} = (\C_I)_{i+1, i+1} = 0$ and $(\C_I)_{i, i+1} = (\C_I)_{i+1, i} = 1$.
\end{lem}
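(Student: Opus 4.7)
The plan is to read off the \emph{furthermore} claims directly from the definitions, use them to dispatch the \emph{only if} direction, and then reduce the \emph{if} direction to a single observation about how $a$ acts on the partition $\M\sqcup\N$.

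For the furthermore claims I will use that $(\C_I)_{i,i}=(\C_I)_{i+1,i+1}=0$ by hollowness (\lref{l:princsh}), and that $i,i+1\in\N$ forces $a_i=i\prec i+1=a_{i+1}$, so \eref{e:link} gives $(\C_I)_{i,i+1}=(\C_I)_{i+1,i}=1$. Combining these four values, rows $i$ and $i+1$ already disagree in columns $i$ and $i+1$, which establishes the \emph{only if} direction.

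For the \emph{if} direction I fix $k\in I\setminus\{i,i+1\}$ and split on whether $k<i$ or $k>i+1$ (after identifying $\Z_p^*$ with $\{1,\dots,p-1\}$). In the first subcase \eref{e:link} makes $(\C_I)_{i,k}=1$ iff $a_k<i$ and $(\C_I)_{i+1,k}=1$ iff $a_k<i+1$, which agree unless $a_k=i$; in the second subcase the relevant tests are $i<a_k$ versus $i+1<a_k$, which agree unless $a_k=i+1$. So the \emph{if} direction reduces to showing $a_k\notin\{i,i+1\}$ for $k\neq i,i+1$.

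The substantive step---and the main obstacle---is that $a$ setwise preserves $\N$ (equivalently, $\M$). Since $a$ acts as the identity on $\N$ and as $k\mapsto -2k$ on $\M$, this will follow once I verify that $-2\in\Rp_p$, because then multiplication by $-2$ preserves $\M$ in all three cases. In Case~$1$, $p\equiv3\bmod8$ makes both $-1$ and $2$ non-residues, so $-2$ is a residue; in Cases~$2$ and~$3$, $p\equiv1\bmod8$ makes both $-1$ and $2$ residues, so again $-2$ is a residue. Given this preservation, $a_k=i\in\N$ forces $k\in\N$ and hence $k=a_k=i$; similarly $a_k=i+1$ forces $k=i+1$. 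Both are excluded, completing the proof.
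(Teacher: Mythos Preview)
Your proof is correct and follows essentially the same route as the paper's. The only difference is cosmetic: the paper handles the key point in one line by observing that $a_k\notin\{a_i,a_{i+1}\}=\{i,i+1\}$ (since the map $k\mapsto a_k$ is injective), and then notes that $a_k$ therefore lies either below $i$ or above $i+1$, so the two rows agree at column $k$; you instead split on whether $k<i$ or $k>i+1$ and supply an explicit justification (via $-2\in\Rp_p$) that $a$ preserves $\N$ setwise, which amounts to the same injectivity fact.
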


\begin{proof}
  We know that $(\C_I)_{i, i} = (\C_I)_{i+1, i+1} = 0$ from \lref{l:princsh}. As $\{i, i+1\} \subseteq \N$ we know that $a_i = i$ and $a_{i+1} = i+1$. So it follows from the definition of $\C_I$ that $(\C_I)_{i+1, i} = (\C_I)_{i, i+1} = 1$. Now let $k \in I \setminus \{i, i+1\}$.
Since $a_k\notin\{a_i,a_{i+1}\}$, we must have that either $a_k \prec a_i \prec a_{i+1}$ or $a_k\succ a_{i+1} \succ a_{i}$ and it follows from the definition of $\C_I$ that $(\C_I)_{i, k} = (\C_I)_{i+1, k}$.
\end{proof}

\begin{lem}\label{l:red1}
  Let $I \subseteq \Z_p^*$ and let $i, j \in I \cap \M$ be adjacent in $I$ with either $i < j \leq h_p$ or $j > i > h_p$. Suppose that $X(I, i, j) = \{k\}$ for some $k \in I$. Then $(\C_I)_{i, l} = (\C_I)_{j, l}$ if and only if $l \neq k$. If $k < i < j$ then $(\C_I)_{i, k} = 1$ and $(\C_I)_{j, k} = 0$. Otherwise $(\C_I)_{i, k} = 0$ and $(\C_I)_{j, k} = 1$.
\end{lem}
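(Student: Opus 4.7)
My plan is to read $\C_I$ as a concordance matrix: for distinct $r \prec s$ in $I$, $(\C_I)_{r,s}=1$ exactly when $a_r \prec a_s$ and $(\C_I)_{r,s}=0$ otherwise, while the diagonal is zero by \lref{l:princsh}. Since $i,j \in \M$ both lie on the same side of $h_p$ with $i \prec j$, \lref{l:abineq} gives $a_j \prec a_i$, so by \dref{d:diff} we have $X(I,i,j)=\{l \in I : a_j \prec a_l \prec a_i\}$, which by hypothesis equals $\{k\}$.

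First I dispose of $l \in \{i,j\}$: $(\C_I)_{i,i}=(\C_I)_{j,j}=0$, while $(\C_I)_{i,j}=(\C_I)_{j,i}=0$ because $i \prec j$ and $a_j \prec a_i$. So neither of these $l$'s contributes to a disagreement. Because $i,j$ are adjacent in $I$, every remaining $l \in I\setminus\{i,j\}$ satisfies either $l \prec i$ or $l \succ j$, and I carry out the two sub-cases in parallel. If $l \prec i \prec j$, then $(\C_I)_{i,l}=1$ iff $a_l \prec a_i$ and $(\C_I)_{j,l}=1$ iff $a_l \prec a_j$; since $a_j \prec a_i$, these two conditions agree unless $a_j \prec a_l \prec a_i$, in which case $(\C_I)_{i,l}=1$ and $(\C_I)_{j,l}=0$. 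Symmetrically, if $i \prec j \prec l$, then $(\C_I)_{i,l}=1$ iff $a_l \succ a_i$ and $(\C_I)_{j,l}=1$ iff $a_l \succ a_j$, and these agree unless $a_j \prec a_l \prec a_i$, now with $(\C_I)_{i,l}=0$ and $(\C_I)_{j,l}=1$.

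Putting the cases together, $(\C_I)_{i,l}\neq(\C_I)_{j,l}$ occurs precisely for $l \in X(I,i,j)=\{k\}$, proving the first claim. For the sign convention, adjacency of $i,j$ in $I$ forbids $i \prec k \prec j$, so either $k \prec i \prec j$ (giving $(\C_I)_{i,k}=1$, $(\C_I)_{j,k}=0$ as in the first sub-case) or $i \prec j \prec k$ (giving $(\C_I)_{i,k}=0$, $(\C_I)_{j,k}=1$ as in the second), matching the statement of the lemma exactly. I do not anticipate any genuine obstacle: the content is purely definitional once the concordance reading of $\C_I$ is unpacked, and the only real care needed is invoking \lref{l:abineq} at the start to pin down $a_j \prec a_i$ so that $X(I,i,j)$ is the set $\{l : a_j \prec a_l \prec a_i\}$ rather than its empty counterpart.
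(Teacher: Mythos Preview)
Your proof is correct and follows essentially the same approach as the paper's own proof: both invoke \lref{l:abineq} to get $a_j \prec a_i$, handle the diagonal and $(i,j)$ entries via \lref{l:princsh} and the definition of $\C_I$, then use adjacency of $i,j$ to force any other $l$ to lie either below $i$ or above $j$ and compare entries accordingly. The only cosmetic difference is that the paper separates out $l=k$ from $l\in I\setminus\{i,j,k\}$ at the outset, whereas you treat all $l\in I\setminus\{i,j\}$ uniformly and identify $k$ as the unique point of disagreement at the end; the content is the same.
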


\begin{proof}
  We first note that as $i < j \leq h_p$ or $j > i > h_p$ it follows from \lref{l:abineq} that $a_j \prec a_i$ and hence $(\C_I)_{j, i} = (\C_I)_{i, j} = 0$. Also, $(\C_I)_{i, i} = (\C_I)_{j, j} = 0$ from \lref{l:princsh}. Now let $l \in I \setminus \{i, j, k\}$. Then, as $i, j$ are adjacent in $I$, it follows that either $l < i < j$ or $l > j > i$. Also as $l \not\in X(I, i, j)$ we know that $a_l \prec a_j \prec a_i$ or $a_l \succ a_i \succ a_j$. Hence $(\C_I)_{i, l} = (\C_I)_{j, l}$. 
  
  Now $i, j$ are adjacent in $I$ and $k \in X(I,i,j)$, so we know that $a_j \prec a_k \prec a_i$ and either $k < i < j$ or $k > j > i$. If $k < i < j$ then $(\C_I)_{i, k} = 1$ and $(\C_I)_{j, k} = 0$ by the definition of $\C_I$. Similarly, if $k > j > i$ then $(\C_I)_{i, k} = 0$ and $(\C_I)_{j, k} = 1$.
\end{proof}

\lref{l:red2} and \lref{l:red1} motivate the following definition.

\begin{defin}\label{d:reduction}
  Let $I \subseteq \Z_p^*$.
  \begin{itemize}
  \item Let $i, i+1 \in I \cap \N$. Then we can define $J$ to be the set $I \setminus \{i, i+1\}$. We say that $J$ has been obtained by performing a Type One reduction of $I$ on $i$ and $i+1$. We also say that we have deleted $i$ and $i+1$ in this process.
  \item Let $i, j \in I \cap \M$ be adjacent in $I$ with either $i < j \leq h_p$ or $j > i > h_p$, so that $a_j \prec a_i$. Further suppose that $X(I, i, j) = \{k\}$ for some $k \in I$. If $k < i < j$ then we define $J$ to be the set $I \setminus \{i, k\}$. We say that we have deleted $i$ and $k$ in this process. If $k > j > i$ then we define $J$ to be the set $I \setminus \{j, k\}$. We say that we have deleted $j$ and $k$ in this process. In both situations, we say that $J$ has been obtained by performing a Type Two reduction of $I$ on $i$ and $j$.
  \end{itemize}
\end{defin}

For example, consider the case when $p = 11$ and let $I = \Z_p^*$. We have that $3, 4 \in I \cap \Rp_p$ are adjacent and $X(I, 3, 4) = \{9\}$. So by defining $J = \Z_p^* \setminus \{4, 9\}$ we are performing a Type Two reduction of $I$ on $3$ and $4$. The following is the initial matrix $\C_{I}$, followed by the matrix $\C_{J}$ after performing the reduction operation. Note that the highlighted third and fourth rows of $\C_{I}$ agree on every column except column $9$. This is predicted by \lref{l:red1}.
\begin{equation*}
  \begin{pNiceMatrix}
    0&0&0&0&0&0&0&0&0&1\\
    0&0&1&1&0&1&1&1&1&1\\
    \rowcolor{blue!15}
    0&1&0&0&0&1&1&1&0&1\\
    \rowcolor{blue!15}
    0&1&0&0&0&1&1&1&1&1\\
    0&0&0&0&0&1&1&1&1&1\\
    0&1&1&1&1&0&1&1&0&1\\
    0&1&1&1&1&1&0&1&0&1\\
    0&1&1&1&1&1&1&0&0&1\\
    0&1&0&1&1&0&0&0&0&1\\
    1&1&1&1&1&1&1&1&1&0
  \end{pNiceMatrix}
  \quad\longrightarrow\quad
  \begin{pNiceMatrix}
    0&0&0&0&0&0&0&1\\
    0&0&1&0&1&1&1&1\\
    0&1&0&0&1&1&1&1\\
    0&0&0&0&1&1&1&1\\
    0&1&1&1&0&1&1&1\\
    0&1&1&1&1&0&1&1\\
    0&1&1&1&1&1&0&1\\
    1&1&1&1&1&1&1&0
  \end{pNiceMatrix}
\end{equation*}

The interest in these reduction operations is due to the following lemma.

\begin{lem}\label{l:detpres}
  Let $I \subseteq \Z_p^*$ and suppose that $J$ is obtained by performing a Type One or Type Two reduction of $I$ on $i$ and $j$ for some $i, j \in I$. Then,
  $ %\[
  \det(\C_I) \equiv \det(\C_J) \bmod 2.
  $ %\]
\end{lem}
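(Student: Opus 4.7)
The plan is to perform elementary row and column operations over $\Z_2$ on $\C_I$ --- operations that preserve the determinant --- until two chosen rows and two chosen columns each carry a single nonzero entry, and then to expand by cofactors to expose $\C_J$ as the remaining principal submatrix. The symmetry and hollowness of $\C_I$ from \lref{l:princsh} is what makes the intended row and column operations compatible with each other, and working modulo $2$ means that signs from the cofactor expansions can be ignored altogether.

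For Type One, with $J=I\setminus\{i,i+1\}$ where $i,i+1\in I\cap\N$, \lref{l:red2} tells us that rows $i$ and $i+1$ of $\C_I$ agree outside the columns $\{i,i+1\}$, where they form the $2\times 2$ anti-identity block. I would add row $i$ to row $i+1$, which leaves row $i+1$ supported on columns $\{i,i+1\}$, and then add column $i$ to column $i+1$, which by symmetry of $\C_I$ cancels the $(i+1,i+1)$ entry and similarly reduces column $i+1$ to a vector supported only on row $i$. Expanding the resulting matrix along row $i+1$ eliminates column $i$, and then expanding along column $i+1$ eliminates row $i$, leaving exactly the principal submatrix $\C_{I\setminus\{i,i+1\}}=\C_J$.

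For Type Two, consider first the case $k<i<j$; the case $k>j>i$ is entirely analogous after swapping the roles of $i$ and $j$. By \lref{l:red1}, rows $i$ and $j$ of $\C_I$ agree in every column except column $k$, where $(\C_I)_{i,k}=1$ and $(\C_I)_{j,k}=0$. Adding row $j$ to row $i$ therefore turns row $i$ into a unit vector supported only at column $k$; by symmetry of $\C_I$, adding column $j$ to column $i$ turns column $i$ into a unit vector supported only at row $k$. Two cofactor expansions --- along row $i$ and then along the modified column $i$ --- remove rows $\{i,k\}$ and columns $\{i,k\}$, producing $\C_{I\setminus\{i,k\}}=\C_J$. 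The symmetric case works identically with $i$ and $j$ interchanged, yielding $\C_{I\setminus\{j,k\}}=\C_J$.

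The only thing to verify carefully is that each of the four elementary operations disturbs only entries we are willing to lose, namely those lying in the two rows and two columns being cleared out. This is automatic, since a row operation alters a single row and a column operation alters a single column; all entries of the surviving submatrix are untouched. The role of \lref{l:red2} and \lref{l:red1} is precisely to set up the pattern of agreement between rows (and hence, by symmetry, between columns) that makes these simple operations work, and with that pattern in place there is no genuine obstacle in the argument: we recover $\det(\C_I)\equiv\det(\C_J)\bmod 2$ in both reduction types simultaneously.
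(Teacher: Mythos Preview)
Your argument is correct. The paper proves the same lemma by a different route: it observes that the transposition swapping the two relevant indices (namely $(i,i+1)$ for Type One, or $(i,j)$ for Type Two) acts as a symmetry of $\C_I$, and hence induces an involution on the set of nonzero diagonals of $\C_I$. Diagonals in orbits of size~$2$ cancel modulo~$2$; the fixed diagonals are exactly those using both of the designated off-diagonal cells in the $2\times2$ block, and counting them gives $\det(\C_J)$ modulo~$2$.

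Your approach is a clean linear-algebra computation: two elementary row/column additions and two cofactor expansions, with \lref{l:red2} and \lref{l:red1} guaranteeing the needed cancellations and \lref{l:princsh} guaranteeing that the column step mirrors the row step. The paper's sign-reversing-involution argument is more combinatorial in flavour and makes the role of the symmetry of $\C_I$ explicit at the level of diagonals rather than at the level of entries. Both are elementary and roughly the same length; yours is perhaps more mechanical to verify step by step, while the paper's makes clearer \emph{why} the parity is preserved (an involution pairs off everything except the contributions already present in $\C_J$).
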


\begin{proof}
  We first prove the claim for a Type One reduction. Suppose that we performed the reduction on $i$ and $i+1$, for some $i, i+1 \in I \cap \N$. By \lref{l:red2}, applying the transposition $(i, i+1)$ to the rows and columns of $\C_I$ reveals a symmetry of $\C_I$. The action of this symmetry partitions the positive diagonals (i.e.~selections of $|I|$ ones from different rows and columns) of $\C_I$ into orbits of size $1$ or $2$. The only orbits of size $1$ are the positive diagonals that include both cells $(i, i+1)$ and $(i+1, i)$. The number of such diagonals is equal to the determinant of $\C_J$ modulo $2$. Hence $\det(\C_I) \equiv \det(\C_J) \bmod 2$.
  
  The argument for Type Two reductions is similar. Suppose that we performed the reduction on $i$ and $j$ for some adjacent $i, j \in I \cap \M$, and assume, without loss of generality, that $i < j$. We can write $X(I, i, j) = \{k\}$ for some $k \in I$. Suppose in the first instance that $k < i < j$, so that $J = I \setminus \{i, k\}$. Since $\C_I$ is symmetric, we see that $\det(\C_I) \equiv |V_I| \bmod 2$, where $$V_I = \{\sigma \in \text{Sym}(I) : \sigma = \sigma^{-1} \text{ and } (\C_I)_{l,\sigma l} = 1 \text{ for all } l \in I\}.$$ Also define the set $Z_I = \{\sigma \in V_I : \sigma(i) = k\}$. \lref{l:red1} shows that applying the transposition $(i, j)$ to the rows and columns of $\C_I$ induces a fixed-point-free involution on $V_I\setminus Z_I$. Hence $\det(\C_I) \equiv |Z_I| \equiv \det(\C_J) \bmod 2$. The case where $k > j > i$ is dealt with similarly.
\end{proof}

We will be interested in sets which are obtained from $\Z_p^*$ by a sequence $R_1,R_2,\dots,R_n$ of reduction operations. 
For $m \in \{0, 1, 2, \ldots, n\}$ we will let $I_m$ denote the set obtained from $\Z_p^*$ by the first $m$ of these reduction operations. So $I_0 = \Z_p^*$. We will sometimes use $I$ as shorthand for $I_n$.
\lref{l:detpres} shows that $\det(\C_I) \bmod 2$ is an invariant.
Our next lemma gives another useful invariant of the set $I$.

\begin{lem}\label{l:eqrowss}
  Let $I \subseteq \Z_p^*$ be obtained from $\Z_p^*$ by a sequence of Type One or Type Two reductions. Let $i, j \in I \cap \M$ with either $i < j \leq h_p$ or $j > i > h_p$. Then $|X(I, i, j)| \equiv 1 \bmod 2$.
\end{lem}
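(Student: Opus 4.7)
I would proceed by induction on the number of reductions used to build $I$ from $\Z_p^*$. The pivotal observation is that the map $a\colon \Z_p^*\to\Z_p^*$, $k\mapsto a_k$, is a bijection: it is the identity on $\N$, and sends $\M$ to itself via multiplication by $-2$. (Indeed, $(-2)\cdot\M=\M$ in all three cases, either because $-2\in\M$ in Cases~$1$ and~$2$, or because $-2\in\N$ and $\N\cdot\M=\M$ in Case~$3$.) Consequently $|X(I,i,j)|$ equals the number of values in $a(I)\subseteq\Z_p^*$ strictly between $a_i$ and $a_j$, and each reduction removes exactly two such values.

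For the base case $I_0 = \Z_p^*$, bijectivity of $a$ gives $a(I_0)=\Z_p^*$. Taking $i,j\in\M$ with $i<j\le h_p$ (the case $j>i>h_p$ is analogous), \lref{l:abineq} gives $a_j\prec a_i$, and then
\[
|X(\Z_p^*,i,j)| \;=\; a_i-a_j-1 \;=\; (p-2i)-(p-2j)-1 \;=\; 2(j-i)-1,
\]
which is odd.

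For the inductive step, let $J$ be obtained from $I$ by one reduction and fix $i,j\in J\cap\M$ satisfying the hypothesis. It suffices to show that the two deleted elements are either both in $X(I,i,j)$ or both outside it. In a Type One reduction, $I\setminus J=\{m,m+1\}\subseteq\N$, whose $a$-values are the consecutive integers $m$ and $m+1$. Since $a_i,a_j\in a(\M)=\M$ lie outside $\N$, neither can equal $m$ or $m+1$; hence $\{m,m+1\}$ cannot straddle an endpoint of the open interval between $a_i$ and $a_j$, and so lies entirely inside or entirely outside it.

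In a Type Two reduction with $I\setminus J=\{i',k\}$ (the case $\{j',k\}$ is symmetric), the definition of the reduction supplies $a_{j'}\prec a_k\prec a_{i'}$ with $k$ the unique element of $I$ whose $a$-value lies in $(a_{j'},a_{i'})$. Hence every $l\in I\setminus\{i',j',k\}$ satisfies $a_l<a_{j'}$ or $a_l>a_{i'}$. A short case analysis shows that if exactly one of $a_{i'},a_k$ were to lie in the open interval between $a_i$ and $a_j$, then either $a_i$ or $a_j$ itself would land in $(a_{j'},a_{i'})$, forcing $i=k$ or $j=k$ by uniqueness of $k$ and injectivity of $a$; this contradicts $i,j\in J$. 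This Type Two case split is the main obstacle, as one must also rule out the boundary scenario where $i$ or $j$ coincides with the surviving $\M$-element $j'$; in that case $a_j=a_{j'}\prec a_k$ combined with the requirement that $a_k$ lie between $a_i$ and $a_j$ quickly produces a contradiction.
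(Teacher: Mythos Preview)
Your proposal is correct and follows essentially the same approach as the paper: induction on the number of reductions, with the base case handled by the explicit count $a_i-a_j-1=2(j-i)-1$, and the inductive step by showing the two deleted elements are either both inside or both outside $X(I,i,j)$. Your emphasis on the bijectivity of $k\mapsto a_k$ (equivalently, that $-2\cdot\M=\M$ since $-2\in\Rp_p$) is exactly the key fact the paper invokes, and your Type Two case analysis---reducing ``exactly one of $a_{i'},a_k$ lies in $(a_j,a_i)$'' to the contradiction that $i$ or $j$ must then lie in $X(I,i',j')=\{k\}$---is a slightly cleaner reorganisation of the paper's argument, which instead treats the subcases ``$k\in X(I_{m-1},i,j)$'' and ``$x\in X(I_{m-1},i,j)$ but $k\notin X(I_{m-1},i,j)$'' separately.
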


\begin{proof}
  Suppose that $I$ has been obtained from $\Z_p^*$ by a sequence $R_1, R_2, \ldots, R_n$ of reductions. We will prove the statement is true for $I_m$, by induction on $m \in \{0, 1, 2, \ldots, n\}$. When $m=0$ we have $I_m = \Z_p^*$. Let $i, j \in I_m \cap \M$ with either $i < j \leq h_p$ or $j > i > h_p$. If $i < j \leq h_p$ then 
  \[
  q(-2i)-q(-2j)=(p - 2i) - (p - 2j) = 2(j - i) \equiv 0 \bmod 2.
  \]
  So there are an odd number of elements between $a_j$ and $a_i$ in $\Z_p^*$. Hence $|X(I_m, i, j)| \equiv 1 \bmod 2$. The argument when $m=0$ and $j > i > h_p$ is similar.
  
  Now suppose that the result is true for $I_{m-1}$, for some positive integer $m$ and consider the set $I_m$. Let $i, j \in I_m \cap \M$ with $i < j \leq h_p$ or $j > i > h_p$. By the induction hypothesis we know that $|X(I_{m-1}, i, j)| \equiv 1 \bmod 2$. First suppose that $R_m$ is a Type One reduction on some $k, k+1 \in I_{m-1} \cap \N$. As $-2\in \Rp_p$ and $i, j \in \M$, it follows that $-2i, -2j \in \M$. So in particular $k, k+1 \not\in \{-2i, -2j\}$. It follows that $-2j \prec k \prec -2i$ if and only if $-2j \prec k + 1 \prec -2i$. So we either have $X(I_m, i, j) = X(I_{m-1}, i, j)$ or $X(I_m, i, j) = X(I_{m-1}, i, j) \setminus \{k, k+1\}$. Either way it follows that $|X(I_m, i, j)| \equiv |X(I_{m-1}, i, j)| \equiv 1 \bmod 2$.

  Next suppose that $R_m$ is a Type Two reduction on $x$ and $y$ for some $x, y \in I_{m-1} \cap \M$. Assume, without loss of generality, that $x < y$. By \dref{d:reduction} we know that $X(I_{m-1}, x, y) = \{k\}$ for some $k \in I_{m-1}$. We also know that $x < y \leq h_p$ or $x > y > h_p$, so by \lref{l:abineq} we know that $-2y \prec -2x$. Assume first that $k < x < y$ so that $I_m = I_{m-1} \setminus \{k, x\}$. If $k$ and $x$ were both not contained in $X(I_{m-1}, i, j)$, then $X(I_m, i, j) = X(I_{m-1}, i, j)$, which has odd cardinality. So suppose that $k \in X(I_{m-1}, i, j)$. Then, since $k \in X(I_{m-1}, x, y)$, we have $\max\{-2j, -2y\}\prec a_k \prec \min\{-2i, -2x\}$. If $-2y \prec -2j$ then it would follow that $j \in X(I_{m-1}, x, y)$, which is a contradiction. So $-2y \succcurlyeq -2j$. Similarly $-2x \preccurlyeq -2i$. Also, as $x \not\in I_m$ we know that $x \neq i$ and so we have that
 $ % \begin{equation*}\label{e:eqrowsineq}
    -2j \preccurlyeq -2y \prec a_k \prec -2x \prec -2i.
 $ % \end{equation*}
  Hence $k, x \in X(I_{m-1}, i, j)$. As $I_m = I_{m-1} \setminus \{x, k\}$ it follows that $X(I_m, i, j) = X(I_{m-1}, i, j) \setminus \{k, x\}$ and so $|X(I_m, i, j)| \equiv |X(I_{m-1}, i, j)| \equiv 1 \bmod 2$. It remains to deal with the case where $x \in X(I_{m-1}, i, j)$ and $k \not\in X(I_{m-1}, i, j)$. In this case we must have
 $ % \[
  -2y \prec a_k \prec -2j \prec -2x \prec -2i.
 $ % \]
  But this implies that $j \in X(I_{m-1}, x, y)$, which is a contradiction. So in all cases we have shown that $|X(I_m, i, j)| \equiv 1 \bmod 2$. The situation where $k > y > x$ is dealt with using analogous arguments.
\end{proof}

To prove that the matrix $\C$ is non-singular over $\Z_2$ we will now
consider a specific sequence of reduction operations. There will be
three different stages of these reduction operations. We begin with
$I_0 = \Z_p^*$ and in the first stage we will perform Type One
reductions until we reach a set $I_{t_1}$ which contains no
consecutive elements of $\N$. In the second stage, we will perform
Type Two reductions that delete one element of $\M$ and one element of $\N$,
until we reach a
set $I_{t_2}$ which does not contain any elements of $\N$, except for
$p-1$ in Case $1$. In the last stage we will perform Type
Two reductions that delete two elements of $\M$
until we reach a set $I_{t_3}$ of size $2$. Note that
we must have $t_3 = h_p-1$. For $i \in \{1, 2, 3\}$ we will let $C_i$
denote the principal submatrix of $\C$ induced by the rows and columns
in $I_{t_i}$. It follows from \lref{l:detpres} that
$\det(\C)\equiv \det(\C_3) \bmod 2$. Before detailing the first stage of the
sequence of reductions we need the following definition.

\begin{defin}\label{d:qnrrun}
Let $p$ be a prime. A maximal sequence of consecutive elements of
$\Z_p^*$ which is contained in $\N$ is called a \emph{run of $\N$-elements}.
For $i \in \N$ we define $l_i$ to be the
\emph{length} of the run of $\N$-elements containing $i$ (that is, the
number of elements in the run). We also define $\ell = \max\{l_i : i\in \N\}$.
\end{defin}

\begin{algorithm}[H]
  \DontPrintSemicolon
  \SetKwInOut{Input}{input}\SetKwInOut{Output}{output}
  \Input{A prime $p > 3$ with $p \equiv 1 \bmod 8$ or $p \equiv 3 \bmod 8$}
  $I_0 := \Z_p^*$ \;
  $\zeta := 0$ \;
  \For{each run of $\N$-elements of length at least $2$}{Let $\iota \in \Z_p^*$ be the first element in the run\;
    Let $\lambda \in \Z_p^*$ be such that $\iota+\lambda$ is the last element in the run\;
    \For{odd $\theta \in \{1, 2, \ldots, \lambda\}$}{
      $I_{\zeta+1} := I_{\zeta} \setminus \{\iota+ \lambda - \theta, \iota+ \lambda - \theta + 1\}$ \;
      $\zeta := \zeta+1$
    }
  }
  \Return $\zeta, I_{\zeta}$\;
  \caption{Deleting consecutive elements of $\N$}
\end{algorithm}

Let $t_1$ denote the number of reduction operations performed in Algorithm $1$. We first note that in each iteration of Algorithm $1$ we have that $\iota + \lambda - \theta$, $\iota + \lambda - \theta + 1 \in \N$. Using this and \dref{d:reduction} we get the following result.

\begin{lem}\label{l:mrocor}
  For each $\zeta \in \{0, 1, 2, \ldots, t_1-1\}$ the set $I_{\zeta+1}$ has been obtained by performing a Type One reduction of $I_{\zeta}$.
\end{lem}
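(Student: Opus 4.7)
The statement is essentially a bookkeeping check that Algorithm $1$'s output matches \dref{d:reduction} of a Type One reduction. My plan is to fix an iteration $\zeta \in \{0, 1, \ldots, t_1 - 1\}$ and verify directly that the two elements removed at that step are consecutive integers, both lying in $I_\zeta \cap \N$.

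First, I would read off from Algorithm $1$ that at iteration $\zeta + 1$ the deleted pair is $\{\iota + \lambda - \theta,\ \iota + \lambda - \theta + 1\}$, where $\iota$ and $\iota + \lambda$ are respectively the first and last elements of the run of $\N$-elements currently being processed and $\theta$ is odd with $1 \leq \theta \leq \lambda$. These two elements are consecutive integers by inspection. Moreover, they both lie in the block $\{\iota, \iota + 1, \ldots, \iota + \lambda\}$, which is a run of $\N$-elements in the sense of \dref{d:qnrrun}, so both elements belong to $\N$.

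The only point requiring care is membership in $I_\zeta$, i.e., that neither of the two elements has already been deleted by an earlier iteration. I would handle this by a short induction on the number of iterations already performed on the current run. Iterations arising from previously processed runs are irrelevant, since distinct maximal runs of $\N$-elements are pairwise disjoint and so their member elements cannot coincide with anything in the current block. Within the current run, the earlier odd values $\theta' \in \{1, 3, \ldots, \theta - 2\}$ have removed exactly the pairs $\{\iota + \lambda - \theta',\ \iota + \lambda - \theta' + 1\}$, and every such element is strictly greater than $\iota + \lambda - \theta + 1$; hence they are disjoint from the pair under consideration, which therefore still lies in $I_\zeta$. Applying \dref{d:reduction} then yields that $I_{\zeta + 1}$ is obtained from $I_\zeta$ by a Type One reduction, as required. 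I do not anticipate any genuine obstacle; the lemma is really just a verification that the index arithmetic chosen in Algorithm $1$ produces valid Type One reductions at each step.
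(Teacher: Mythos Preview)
Your proposal is correct and follows essentially the same approach as the paper, which simply notes that $\iota+\lambda-\theta$ and $\iota+\lambda-\theta+1$ both lie in $\N$ and then invokes \dref{d:reduction}. Your version is actually more careful than the paper's: you explicitly verify that the two elements have not been removed by an earlier iteration, a point the paper leaves implicit.
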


We will now prove some facts about the output of Algorithm $1$.

\begin{lem}\label{l:mroprop} \mbox{ }
  \begin{enumerate}[(i)] 
  \item Let $z \in \N$. If $l_z \equiv 0 \bmod 2$ then $z \not\in I_{t_1}$. Otherwise $z \in I_{t_1}$ if and only if $z$ is the first element in the run of $\N$-elements containing $z$,
  \item $\min(I_{t_1}) = \kappa_p$,
  \item $\max(I_{t_1}) = p - \kappa_p$.
  \end{enumerate}
\end{lem}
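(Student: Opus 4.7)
The plan is to analyse each of the three properties of $I_{t_1}$ by directly tracking the effect of Algorithm~$1$ on $\Z_p^*$.

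For (i), I would simulate the inner for-loop on a single run $\{\iota,\iota+1,\ldots,\iota+\lambda\}$ of $\N$-elements of length $\lambda+1\ge 2$. As $\theta$ runs through the odd integers in $\{1,\ldots,\lambda\}$, the deleted pairs $\{\iota+\lambda-\theta,\iota+\lambda-\theta+1\}$ are pairwise disjoint and consecutive, and by \lref{l:mrocor} each deletion is a legitimate Type One reduction. A parity check on $\lambda$ then shows that the whole run is wiped out when $\lambda$ is odd (run of even length) and that every element except the first one $\iota$ is wiped out when $\lambda$ is even (run of odd length $\ge 3$). Runs of length $1$ are skipped by the outer loop and elements of $\M$ are never touched, so (i) follows.

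For (ii), in Cases~$1$ and~$2$ the claim is immediate since $\kappa_p=1\in\M$ is never deleted. For Case~$3$ my plan is to observe that $1,2,\ldots,\kappa_p-1$ are all quadratic residues by the minimality of $\kappa_p$, so they form the initial run of $\N=\Rp_p$-elements. This run has length $\kappa_p-1$, which is even because $\kappa_p$ is (necessarily) an odd prime in Case~$3$, as noted earlier in the subsection. By (i) the whole run is deleted, while $\kappa_p\in\M$ survives, giving $\min(I_{t_1})=\kappa_p$.

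For (iii) I would mirror the strategy of (ii) at the top of $\Z_p^*$. In Case~$1$, the hypothesis $p\equiv 3\bmod 8$ makes both $-1$ and $2$ non-residues, so $-1\in\N$ and $-2=(-1)(2)\in\Rp_p=\M$; the run containing $p-1$ is then the singleton $\{p-1\}$, which survives, giving $\max(I_{t_1})=p-1=p-\kappa_p$. In Case~$2$, $-1\in\Rp_p=\M$ directly, so $p-1\in I_{t_1}$. In Case~$3$, since $p\equiv 1\bmod 4$ we have $-1\in\Rp_p$, so each of $-1,-2,\ldots,-(\kappa_p-1)$ is a product of two residues and thus lies in $\N=\Rp_p$, while $-\kappa_p\in\Np_p=\M$; the run containing $p-1$ is therefore exactly $\{p-\kappa_p+1,\ldots,p-1\}$ of length $\kappa_p-1$, which is even and is wiped out by (i), leaving $p-\kappa_p$ as the maximum.

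The only real subtlety is the residue-class bookkeeping at the tail in Case~$3$ of (iii); once one uses that $-1$ is a residue and that $\kappa_p$ is odd, the parity argument that killed the initial run in (ii) also kills the terminal run in (iii), and no further obstacles arise.
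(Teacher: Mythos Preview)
Your proposal is correct and follows essentially the same approach as the paper's proof: both simulate the inner loop of Algorithm~1 on a single run to establish (i), then use (i) together with the parity of $\kappa_p-1$ and the quadratic character of $-1$, $2$, $-2$ to handle the boundary cases in (ii) and (iii). Your justifications of the residue facts (e.g.\ $-2\in\Rp_p$ in Case~1 because $-1$ and $2$ are both non-residues) are a bit more explicit than the paper's, which simply quotes $-2\in\Rp_p$ as already established, but the underlying argument is the same.
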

\begin{proof}
  As $\theta$ takes the odd values in $\{1,2,\dots,2\lceil\lambda/2\rceil-1\}$, we see that $\iota+\lambda-\theta+1$ and $\iota+\lambda-\theta$ run through all elements from $\iota+\lambda$ down to $\iota+1+\lambda-2\lceil\lambda/2\rceil$. Claim $(i)$ follows.
  
  We now prove $(ii)$. By definition $\kappa_p \in \M$ and we know that only elements of $\N$ are deleted in Algorithm $1$. Thus $\kappa_p \in I_{t_1}$. In Case $1$ and Case $2$ we know that $\kappa_p = 1$ so it is clear that $\kappa_p = \min(I_{t_1})$. Now consider Case $3$. We know that $\kappa_p$ is odd so $\{z \in \Z_p^* : z \prec \kappa_p\}$ form the elements of a run of $\N$-elements of even length. Then by $(i)$ we know that all of these elements have been deleted in Algorithm $1$, hence $\kappa_p = \min(I_{t_1})$ as claimed.
  
  Finally, we will prove $(iii)$. In Case $1$ we know that $p-\kappa_p = p-1 \in \Np_p = \N$. We also know that $-2 \in \Rp_p$ so $l_{p-1} = 1$ is odd. So by $(i)$ we know that $p-\kappa_p \in I_{t_1}$, thus $p-\kappa_p = \max(I_{t_1})$. In Case $2$ we have $p-\kappa_p = p-1 \in \Rp_p = \M$ and hence has not been deleted in Algorithm $1$. So it is clear that $p-\kappa_p = \max(I_{t_1})$. In Case $3$ we have that $p-\kappa_p \in \Np_p = \M$ and so $p-\kappa_p \in I_{t_1}$. As $-1 \in \Rp_p$ it follows that the set $\{z \in \Z_p^*: z \succ p-\kappa_p\}$ forms a run of $\N$-elements of even length, hence by $(i)$ every element in this set has been deleted. So again we have that $p-\kappa_p = \max(I_{t_1})$.
\end{proof}

\begin{ex} In Case $1$, when $p=11$ the matrix $\C_1$ is
  \[\begin{pmatrix}
  0&0&0&0&0&0&0&1\\
  0&0&1&1&0&1&1&1\\
  0&1&0&0&0&1&0&1\\
  0&1&0&0&0&1&1&1\\
  0&0&0&0&0&1&1&1\\
  0&1&1&1&1&0&0&1\\
  0&1&0&1&1&0&0&1\\
  1&1&1&1&1&1&1&0\\
  \end{pmatrix}.
  \]
\end{ex}

We can now describe the second stage of the reduction operations.

\begin{algorithm}[H]
  \DontPrintSemicolon
  \SetKwInOut{Input}{input}\SetKwInOut{Output}{output}
  \Input{$t_1$, $I_{t_1}$}
  $\zeta := t_1$ \; 
  \For{odd $\alpha\in \{1, 2, \ldots, \ell\}$ (in increasing order)}{
    \For{$\beta \in (I_{\zeta} \cap \N) \setminus \{p-1\}$ such that $l_\beta = \alpha$}{
      $\gamma_1 := \min\{q(-2^{-1}(\beta-1)), q(-2^{-1}(\beta+\alpha))\}$\;  
      $\gamma_2 := \max\{q(-2^{-1}(\beta-1)), q(-2^{-1}(\beta+\alpha))\}$\;
      Define $\gamma_0$ to be maximal such that $\gamma_0 \leq \gamma_1$ and $\gamma_0 \in I_{\zeta}$\;
      Define $\gamma_3$ to be minimal such that $\gamma_3 \geq \gamma_2$ and $\gamma_3 \in I_\zeta$\;
      \eIf{$\beta <\gamma_0$}{
$I_{\zeta+1} := I_\zeta \setminus \{\beta, \gamma_0\}$\;
      }{
$I_{\zeta+1} := I_\zeta \setminus \{\beta, \gamma_3\}$ \;
      }
      $\zeta := \zeta + 1$
    }
  }
  \Return $\zeta, I_{\zeta}$\;
  \caption{Deleting remaining elements of $\N$ except $p-1$ in Case $1$}
\end{algorithm}

Let $t_2$ denote the number of reduction operations performed in Algorithm $1$ and Algorithm $2$ combined. It is not clear at this stage that $\gamma_0$ and $\gamma_3$ always exist in Algorithm $2$. However this will be shown. We will also show that for each $\zeta \in \{t_1, t_1+1, \ldots, t_2-1\}$ the set $I_{\zeta+1}$ is obtained by performing a Type Two reduction of $I_\zeta$. We will need some preliminary lemmas to prove this.

\begin{lem}\label{l:k1k2}
  In all iterations of the inner {\rm \textbf{for}} loop of Algorithm $2$, $\gamma_1, \gamma_2 \in \M$ and either $\gamma_1 < \gamma_2 \leq h_p$ or $\gamma_2 > \gamma_1 > h_p$. Hence,
 $ %  \[
  -2\gamma_2 \equiv \beta-1 < \beta+\alpha \equiv -2\gamma_1 \bmod p.
 $ % \]
\end{lem}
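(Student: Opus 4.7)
The plan is to identify $\beta-1$ and $\beta+\alpha$ as the $\M$-elements bounding the $\N$-run starting at $\beta$, confirm $\gamma_1,\gamma_2\in\M$, and then split on the parity of $\beta$ to place $\gamma_1$ and $\gamma_2$ on the same side of $h_p$.

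First, since $\beta\in I_\zeta\cap\N$ with $\zeta\ge t_1$, \lref{l:mroprop}(i) tells us that $\beta$ is the first element of its $\N$-run, which has odd length $\alpha=l_\beta$. Thus the run is exactly $\{\beta,\beta+1,\dots,\beta+\alpha-1\}$, and $\beta-1,\beta+\alpha$ are the elements of $\M\cup\{0,p\}$ bordering it. A brief case check places them both in $\M\cap\Z_p^*$: in Cases~1 and~2 we have $1\in\M$, so $\beta\ge 2$; while $\beta+\alpha=p$ is excluded because the $\N$-run containing $p-1$ is absent in Case~2, has length $1$ and is excluded from the outer loop of Algorithm~2 in Case~1, and has even length $\kappa_p-1$ (hence is deleted by Algorithm~1) in Case~3. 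Since $-2\in\Rp_p$ in every case ($2,-1\in\Np_p$ when $p\equiv 3\bmod 8$, and $2,-1\in\Rp_p$ when $p\equiv 1\bmod 8$), and $\M$ is a coset of $\Rp_p$ in $\Z_p^*$, multiplication by $-2^{-1}\in\Rp_p$ fixes $\M$ setwise, giving $\gamma_1,\gamma_2\in\M$.

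Next I would split on the parity of $\beta$. Since $\alpha$ is odd, $\beta-1$ and $\beta+\alpha$ share a common parity (opposite to that of $\beta$), so the elementary identities $q(-2^{-1}m)=(p-m)/2$ for $m$ odd and $q(-2^{-1}m)=p-m/2$ for $m$ even apply uniformly to both. If $\beta$ is even, then $\gamma_1=(p-\beta-\alpha)/2$ and $\gamma_2=(p-\beta+1)/2$, both lying in $[1,h_p]$, so $\gamma_1<\gamma_2\le h_p$. If $\beta$ is odd, then $\gamma_1=p-(\beta+\alpha)/2$ and $\gamma_2=p-(\beta-1)/2$, both lying in $(h_p,p-1]$, so $\gamma_2>\gamma_1>h_p$. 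In either case, multiplying the explicit formulas by $-2$ modulo $p$ yields $-2\gamma_1\equiv\beta+\alpha$ and $-2\gamma_2\equiv\beta-1$, so combined with $\beta-1<\beta+\alpha$ we obtain the displayed congruence chain.

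The main obstacle is the bookkeeping in the second paragraph, confirming $\beta-1,\beta+\alpha\in\M\cap\Z_p^*$ uniformly across all three cases; this rests on the structural facts about which boundary runs survive Algorithm~1 together with the explicit exclusion of $p-1$ from the outer loop of Algorithm~2. Once this is in place, the rest is a direct parity computation.
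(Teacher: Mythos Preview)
Your argument is correct and takes a genuinely different route from the paper. The paper argues by contradiction: assuming $\gamma_1\le h_p<\gamma_2$, it applies \lref{l:abineq} in the two sub-cases $\gamma_2-\gamma_1\le h_p$ and $\gamma_2-\gamma_1>h_p$, and in each derives an equation forcing $\alpha+1$ to be $p$ minus an even number or an even number minus $p$, contradicting the oddness of $\alpha$. Only after this indirect step does the paper invoke \lref{l:abineq} again to read off which of $\beta-1,\beta+\alpha$ corresponds to $-2\gamma_1$ and $-2\gamma_2$. Your approach bypasses \lref{l:abineq} entirely: since $\alpha$ is odd, $\beta-1$ and $\beta+\alpha$ share a parity, and the explicit formulae $q(-2^{-1}m)=(p-m)/2$ for $m$ odd and $q(-2^{-1}m)=p-m/2$ for $m$ even place both of $\gamma_1,\gamma_2$ in $[1,h_p]$ or both in $(h_p,p-1]$ at once, while simultaneously identifying which is which. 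This is more elementary and yields explicit values for $\gamma_1,\gamma_2$ as a bonus; the paper's argument is shorter to state but leans on an auxiliary lemma.

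One small bookkeeping gap: you verify $\beta-1\ge 1$ only in Cases~1 and~2 (via $1\in\M$), but not in Case~3, where $1\in\N$. The fix is the mirror of your Case~3 argument for $\beta+\alpha\ne p$: the $\N$-run containing $1$ is $\{1,\dots,\kappa_p-1\}$, which has even length $\kappa_p-1$ and so is deleted by Algorithm~1, whence $\beta\ne 1$. (Alternatively, \lref{l:mroprop}(ii) gives $\min(I_{t_1})=\kappa_p\ge 3$, so $\beta\ge 3$.) Also, the exclusion of $\beta=p-1$ in Case~1 occurs in the \emph{inner} \textbf{for} loop of Algorithm~2, not the outer one.
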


\begin{proof}
  Suppose, for a contradiction, that in some iteration of the inner \textbf{for} loop of Algorithm $2$, we have $\gamma_1 \leq h_p<\gamma_2$. Then $q(-2\gamma_1) = p-2\gamma_1$ and $q(-2\gamma_2) = 2p-2\gamma_2$. First suppose that $\gamma_2 - \gamma_1 \leq h_p$. From \lref{l:abineq} it follows that $\beta-1\equiv-2\gamma_1 \prec -2\gamma_2\equiv \beta+\alpha \bmod p$. Hence, we have that
  \begin{equation}\label{e:k1k21}
    -2\gamma_2+2\gamma_1 \equiv \beta+\alpha-(\beta-1) \equiv \alpha+1 \bmod p.
  \end{equation}
  We also have that
  \begin{equation}\label{e:k1k22}
    -2\gamma_2+2\gamma_1 \equiv 2p-2\gamma_2-(p-2\gamma_1) \equiv p-2(\gamma_2-\gamma_1)\bmod p.
  \end{equation}
  Combining \eref{e:k1k21} and \eref{e:k1k22} we see that $p-2(\gamma_2-\gamma_1)=\alpha + 1$ (both sides of this equality are known to lie within $\{1, 2, \ldots, p - 1\}$, by construction). But this is a contradiction of the fact that $\alpha$ is odd.
  
Considering the case where $\gamma_2 - \gamma_1 > h_p$ and following
the same arguments we can show that $\alpha+1 = 2(\gamma_2 - \gamma_1) - p$,
again a contradiction. So it follows that we must always have
either $\gamma_1 < \gamma_2 \leq h_p$ or $\gamma_2 > \gamma_1 > h_p$.
The claim that $-2\gamma_2 \equiv \beta - 1 \bmod p$ and
$-2\gamma_1 \equiv \beta + \alpha \bmod p$ then follows from
\lref{l:abineq}. Finally, we know that $\beta \in I_\zeta \cap \N$ and
so it follows from \lref{l:mroprop} that $\beta$ is the first integer
in a run of $\N$-elements and hence $\beta-1 \in \M$. Similarly, as
$l_\beta = \alpha$ it follows that $\beta+\alpha \in \M$ also.
Since $-2^{-1} \in \Rp_p$, it follows that $\gamma_1, \gamma_2 \in \M$ also.
\end{proof}

In the iteration of the outer \textbf{for} loop of Algorithm $2$ with $\alpha = k$, we delete all elements $z \in I_{t_1} \cap \N$ with $l_z = k$, except for when $z = p-1$ and $k = 1$ in Case $1$.

\begin{lem}\label{l:kdel}
  For all iterations of the inner {\rm \textbf{for}} loop of Algorithm $2$, if $k \in \mathbb{Z}_p^*$ with $\gamma_1 < k < \gamma_2$ then $k \in \N$ and $l_k < \alpha$. In particular, $k \not\in I_\zeta$.
\end{lem}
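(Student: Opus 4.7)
My plan is to establish the three assertions in order: (i) $k \in \N$, (ii) $l_k < \alpha$, and (iii) $k \notin I_\zeta$.

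For (i), I would combine $\gamma_1 < k < \gamma_2$ with \lref{l:k1k2}, which supplies $-2\gamma_1 \equiv \beta+\alpha$ and $-2\gamma_2 \equiv \beta-1 \pmod p$ together with the fact that $\gamma_1,\gamma_2$ lie on the same side of $h_p$. As $k$ ranges through the integers strictly between them, $-2k \bmod p$ takes the values $\beta+\alpha-2,\beta+\alpha-4,\ldots,\beta+1$ in turn, all of which are odd offsets of $\beta$ lying inside the length-$\alpha$ run $[\beta,\beta+\alpha-1] \subseteq \N$. Since $p \equiv 1,3 \bmod 8$ forces $-2 \in \Rp_p$, multiplication by $-2$ preserves the $\M/\N$ partition, and so $k \in \N$. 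Applying the same reasoning to \emph{every} integer strictly between $\gamma_1$ and $\gamma_2$ shows that all of them lie in $\N$, while $\gamma_1,\gamma_2 \in \M$ by \lref{l:k1k2}. Hence the $\N$-run containing $k$ is exactly $[\gamma_1+1,\gamma_2-1]$, of length $\gamma_2-\gamma_1-1 = (\alpha-1)/2 < \alpha$, which proves (ii).

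For (iii), I would split on the parity of $l_k$. If $l_k$ is even, \lref{l:mroprop}(i) immediately gives $k \notin I_{t_1} \supseteq I_\zeta$. If $l_k$ is odd, the same lemma gives $k \notin I_{t_1}$ unless $k = \gamma_1+1$ is the first element of its run; in that remaining case, since the outer loop of Algorithm $2$ iterates over odd $\alpha'$ in increasing order and $l_k = (\alpha-1)/2 < \alpha$, the iteration $\alpha' = l_k$ has already run and deleted $\gamma_1+1$---unless $\gamma_1+1 = p-1$, the one element skipped in Case $1$. But $\gamma_1 < k < \gamma_2 \leq p-1$ forces $k \neq p-1$, which excludes this escape; hence $k \notin I_\zeta$.

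The main obstacle is the boundary setup underlying step (i): one needs $\beta-1$ and $\beta+\alpha$ to be nonzero modulo $p$ so that these $\M$-elements genuinely flank the run $[\beta,\beta+\alpha-1]$ and so that $\gamma_1,\gamma_2 \in \Z_p^*$ as \lref{l:k1k2} asserts. That $\beta \neq 1$ holds in every case follows from \lref{l:mroprop}: trivially in Cases $1$ and $2$ since $1 \in \Rp_p = \M$, and in Case $3$ because the initial $\N$-run $[1,\kappa_p-1]$ has even length $\kappa_p - 1$ and is removed by Algorithm $1$. A mirror argument gives $\beta+\alpha < p$: in Case $1$ the only $\N$-run touching $p-1$ is the singleton $\{p-1\}$ (since $-2 \in \Rp_p$ forces $p-2 \in \M$), and this is the element explicitly skipped by Algorithm $2$; in Case $2$, $p-1 \in \M$ directly; and in Case $3$, by symmetry the terminal $\N$-run also has even length $\kappa_p - 1$ and has been removed by Algorithm $1$.
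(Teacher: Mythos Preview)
Your argument is correct and follows the same route as the paper: place $-2k$ inside the run $[\beta,\beta+\alpha-1]$ via \lref{l:k1k2} to get $k\in\N$, bound the length of the run containing $k$ by $\gamma_2-\gamma_1-1=(\alpha-1)/2<\alpha$, and conclude that $k$ has already been removed. Your case analysis for (iii) and your boundary discussion ($\beta\ne1$, $\beta+\alpha\ne p$) merely make explicit what the paper's one-line ``But then we must have already deleted $k$'' and its proof of \lref{l:k1k2} leave implicit; the paper's looser bound $l_k\le(\alpha+1)/2$ versus your exact value $(\alpha-1)/2$ is immaterial.
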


\begin{proof}
  We have
 $ %  \[
  \beta-1 \equiv -2\gamma_2 \prec -2k \prec -2\gamma_1 \equiv \beta+\alpha \bmod p.
 $ % \]
  Hence $-2k$ is contained in the run of $\N$-elements beginning at $\beta$. As $-2 \in \Rp_p$ it follows that $k \in \N$.
  Also,
  \[
  \gamma_2 - \gamma_1 \equiv -2^{-1}(\beta-1) + 2^{-1}(\beta+\alpha) \equiv \frac{\alpha+1}2 \bmod p.
  \]
  As $\alpha$ is odd it follows that $(\alpha+1)/2$ is an integer in $\{1, 2, \ldots, p-1\}$. Hence $\gamma_1 < k < \gamma_2$ implies that $l_k \leq (\alpha+1)/2 < \alpha$. But then we must have already deleted $k$.
\end{proof}

\begin{lem}\label{l:tech}
  Let $\zeta \in \{t_1+1, t_1+2, \ldots, t_2-1\}$. Let $j \in \M$ and $i \in I_\zeta$ with $0<i<j<p$. Suppose that, for each $z \in \M$ such that $i < z \leq j$, we have deleted $z$ in one of the first $\zeta$ reductions when performing a Type Two reduction on $z$ and $y$ for some $y \in \M$ with $i \leq y \leq j$. Then $i \in \M$ and $\{z \in I_\zeta  : -2j \preccurlyeq a_z \prec -2i\} = \emptyset$. Furthermore if $j > h_p$ then $i > h_p$.
\end{lem}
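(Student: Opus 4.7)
I prove the three conclusions of the lemma in the order (a)~$i\in\M$, (c)~$i>h_p$ whenever $j>h_p$, and (b)~the emptiness of the specified set. Parts (a) and (c) follow from short structural observations about partners in Type Two reductions; part (b) uses strong induction on $m=|(i,j]\cap\M|$.

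For (a), let $R^*$ be the chronologically last reduction among the first $\zeta$ that deletes an element of $\M\cap(i,j]$; such a reduction exists since $j\in\M\cap(i,j]$ has been deleted. Let $z^*$ be the deleted $\M$-element in $R^*$ and $y^*$ its Type Two partner, so by hypothesis $y^*\in\M\cap[i,j]\setminus\{z^*\}$. By the maximality of $R^*$, every $\M$-element in $(i,j]$ other than $z^*$ has already been deleted, leaving $i$ as the only possible value for $y^*$; hence $i\in\M$ and $y^*=i$. For (c), suppose $j>h_p$. By \lref{l:k1k2}, every Type Two reduction in Algorithm $2$ pairs two $\M$-elements on the same side of $h_p$, so every Type Two partner used to delete an $\M$-element of $(h_p,j]$ is itself $>h_p$. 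Combined with the hypothesis, such partners lie in $\M\cap(h_p,j]$ together with $i$ (allowed only when $i>h_p$). Running the argument of (a) inside the restricted collection $\M\cap(h_p,j]$ forces $i$ to play the role of the surviving partner, so $i>h_p$.

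For (b), I induct on $m$. Base case $m=1$: $(i,j]\cap\M=\{j\}$, so by (a) the reduction deleting $j$, say at step $\zeta'\le\zeta$, is Type Two on $\{i,j\}$. By \dref{d:reduction}, $X(I_{\zeta'-1},i,j)=\{k^*\}$ and this reduction removes $\{j,k^*\}$. Since $-2\in\Rp_p$ in all three cases, the map $z\mapsto a_z$ preserves $\M$ and $\N$ and is injective, so $a_j$ is attained only by $j$; therefore the elements of $I_{\zeta'-1}$ with $a$-value in $[-2j,-2i)$ are exactly $\{j,k^*\}$, and after the reduction none survive. Since $I_\zeta\subseteq I_{\zeta'}$, the claim follows.

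For $m\ge 2$, let $\zeta^*\le\zeta$ be the step of the last deletion of an $\M$-element of $(i,j]$; by (a) this reduction is Type Two on $\{i,z^*\}$ with $z^*\in(i,j]\cap\M$. The same clearing argument as the base case, applied to $(i,z^*)$ at step $\zeta^*$, shows $I_\zeta\subseteq I_{\zeta^*}$ has no element with $a$-value in $[-2z^*,-2i)$. If $z^*=j$ we are done; otherwise I apply the inductive hypothesis to the pair $(z^*,j)$ at step $\zeta^*-1$. To verify its hypothesis, let $z\in(z^*,j]\cap\M$ and let $y$ be the Type Two partner of $z$ at its earlier deletion; by hypothesis $y\in\M\cap[i,j]$, and the adjacency of $y$ and $z$ in $I$ at that moment forbids $y<z^*$ (else $z^*$, still alive, would lie strictly between $y$ and $z$ in $I$). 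Hence $y\in\M\cap[z^*,j]$, the inductive hypothesis yields that $I_{\zeta^*-1}$ has no element with $a$-value in $[-2j,-2z^*)$, and $I_\zeta\subseteq I_{\zeta^*-1}$ completes the argument. The main obstacle is this adjacency argument forcing $y\ge z^*$; the rest is bookkeeping about monotonicity of the $I_\zeta$ and the shape of Type Two reductions.
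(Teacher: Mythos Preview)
Your proof is correct and follows essentially the same inductive strategy as the paper's argument. Both proofs induct on the number of $\M$-elements strictly between $i$ and $j$, identify the chronologically last $\M$-deletion in $(i,j]$, observe that its Type~Two partner is forced to be $i$, and then combine the clearing effect of that final reduction with the inductive hypothesis applied to the pair $(z^*,j)$ using the adjacency argument that forces the earlier partner $y$ to satisfy $y\ge z^*$.

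Two small remarks. First, your citation of \lref{l:k1k2} for the claim that Type~Two partners lie on the same side of $h_p$ is misplaced: that lemma concerns $\gamma_1,\gamma_2$, whereas the property you need is built into \dref{d:reduction} itself (``either $i<j\le h_p$ or $j>i>h_p$''). Second, your argument for (a) silently uses that each of the first $t_2$ reductions deletes at most one element of $\M$; this is true because Algorithm~1 deletes only $\N$-elements and each step of Algorithm~2 deletes $\beta\in\N$ together with one other element, and the paper invokes the same fact explicitly. With those two cosmetic points addressed, your argument matches the paper's, the only structural difference being that you extract conclusions (a) and (c) before the induction rather than within it.
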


\begin{proof}
  The proof will be by induction on $m=\big|\{z \in \M : i < z < j\}\big|$. First consider when $m = 0$. Let $s$ be maximal such that $j \in I_s$, so that $R_{s+1}$ deletes $j$. By assumption we know that $R_{s+1}$ is performed on $k, j$ for some $k \in I_s \cap \M$ such that $i \leq k < j$. Then by \dref{d:reduction} we know that $k, j$ are adjacent in $I_s$. As $m=0$ it follows that $k = i$. So by \dref{d:reduction} we have that $i \in \M$ and if $j > h_p$ then $i > h_p$ also. Furthermore, $X(I_s, i, j) = \{k'\}$ for some $k' \in I_s$, and $I_{s+1} = I_s \setminus \{j, k'\}$. This means that $\{z \in I_{s + 1} : -2j \preccurlyeq a_z \prec -2i\} = \emptyset$. As $I_\zeta  \subseteq I_{s + 1}$, it follows that the claim holds when $m = 0$.
  
  Now suppose the claim is true whenever $m < m'$, and that $\big|\{z \in \M : i < z < j\}\big| = m'$. Let $i, j \in \Z_p^*$ be such that the hypotheses of the lemma are satisfied. Let $u$ be maximal such that there exists some $k \in \{i + 1, \ldots, j\} \cap \M \cap I_u$. Each of the first $t_2$ reductions deletes at most one element of $\M$, and hence at most one element of $\{z \in \M : i < z \leq j\}$. Hence, we must have $\{i + 1, \ldots, j\} \cap \M \cap I_u = \{k\}$. First suppose that $k \neq j$ and note that in this case we must have $u > t_1$ as we have already deleted $j$.  
  Consider $z \in \M$ such that $k < z \leq j$. By assumption we know that $z$ has been deleted in some reduction $R_v$ of Type Two, where $v<u$ and we have performed $R_v$ on $z, y$ for some $y \in \M$ with $i \leq y \leq j$. We know from \dref{d:reduction} that $y$ and $z$ are adjacent in $I_{v-1}$. Also, $k < z$ and by definition $k\in I_u\subset I_{v-1}$. Thus we must have $k \leq y$. 
  So, noting that $\big|\{z \in \M : k < z < j\}\big| < m'$, we can now apply the induction hypothesis with $i = k$ and $\zeta = u$ to see that
  \begin{equation}\label{e:xijsep}
    \{z \in I_u : -2j \preccurlyeq a_z \prec -2k\} = \emptyset
  \end{equation}
  and $k > h_p$ if $j > h_p$.
  From the definition of $u$ and the assumptions of the lemma we know that $R_{u+1}$ is a Type Two reduction on $k, k'$ for some $k' \in I_u \cap \M$ adjacent to $k$ with $i \leq k' \leq j$. But we know that $I_u \cap \M \cap (\{i+1, \ldots, j\} \setminus \{k\}) = \emptyset$ by definition of $k$. This forces $k' = i$, so $R_{u+1}$ is a Type Two reduction on $i, k$. So from \dref{d:reduction} it follows that $i \in \M$ and $i > h_p$ if $k > h_p$, which is true if $j > h_p$. 
  Hence $R_{u+1}$ deletes $k$ and $k''$, where $X(I_u, i, k) = \{k''\}$ for some $k'' \in I_u$. 
  It follows that $\{z \in I_{u+1} : -2k \preccurlyeq a_z \prec -2i\} = \emptyset$. As $I_\zeta \subseteq I_{u+1} \subset I_u$ it follows from \eref{e:xijsep} that $\{z \in I_\zeta  : -2j \preccurlyeq a_z \prec -2i\} = \emptyset$, as required.
  
  It remains to deal with the situation where $k = j$. By assumption and the definition of $k$, we know that $R_{u+1}$ is a Type Two reduction on $j$ and $l$ for some $l \in \M\cap I_u$ with $i \leq l \leq j$. From the definition of $k$, we know that $\{i+1, \ldots, j - 1\} \cap \M \cap I_u=\emptyset$, hence $i = l\in \M$. Furthermore, as we are performing a Type Two reduction on $i$ and $j$ it follows from \dref{d:reduction} that if $j > h_p$ then $i > h_p$ also. We also know that $X(I_u, i, j) = \{k'\}$ for some $k' \in I_u$, hence $I_{u+1} = I_u \setminus \{j, k'\}$. Therefore, $\{z \in I_{u+1} : -2j \preccurlyeq a_z \prec -2i\} = \emptyset$. Finally, we note that $I_\zeta  \subseteq I_{u+1}$ and the claim then follows.
\end{proof}

Using analogous arguments as in the proof of \lref{l:tech}, we can prove the following.

\begin{lem}\label{l:tech2}
  Let $\zeta \in \{t_1+1, t_1+2, \ldots, t_2-1\}$. Let $j \in \M$ and $i \in I_\zeta$ with $0<j<i<p$. Suppose that, for each $z\in \M$ such that $j \leq z < i$, we have deleted $z$ in one of the first $\zeta$ reductions when performing a Type Two reduction on $z$ and $y$ for some $y \in \M$ with $j \leq y \leq i$. Then $i \in \M$ and $\{z \in I_\zeta  : -2i \prec a_z \preccurlyeq -2j\} = \emptyset$. Furthermore, if $j \leq h_p$, then $i \leq h_p$ also.
\end{lem}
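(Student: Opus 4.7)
The plan is to carry out the same induction as in the proof of \lref{l:tech}, with the roles of the smaller and larger indices interchanged. Specifically, I will induct on $m = \bigl|\{z \in \M : j < z < i\}\bigr|$, using the fact that each Type Two reduction deletes at most one element of $\M$.

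For the base case $m = 0$, let $s$ be maximal with $j \in I_s$, so $R_{s+1}$ deletes $j$. The hypothesis forces $R_{s+1}$ to be a Type Two reduction pairing $j$ with some $y \in I_s \cap \M$ satisfying $j < y \leq i$; since no element of $\M$ lies strictly between $j$ and $i$, necessarily $y = i$. Applying \dref{d:reduction} to the pair $(j,i)$ with $j < i$ then yields $i \in \M$, the implication ``$j \leq h_p \Rightarrow i \leq h_p$'', and shows that the unique $k' \in X(I_s, j, i)$ must satisfy $k' < j$ (otherwise $i$ rather than $j$ would be the deleted element). Thus $I_{s+1} = I_s \setminus \{j, k'\}$, and since $-2j \in \M$ the only $z$ with $a_z = -2j$ is $z = j$, so no element of $I_{s+1} \supseteq I_\zeta$ lies in the range $-2i \prec a_z \preccurlyeq -2j$.

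For the inductive step, let $u$ be maximal such that $\{j, j+1, \ldots, i-1\} \cap \M \cap I_u \neq \emptyset$; this intersection is then a singleton $\{k\}$. The subcase $k = j$ is handled exactly as the base case with $u$ in place of $s$. When $k \neq j$ (so $j < k < i$), the plan is to apply the inductive hypothesis with $i$ replaced by $k$ and $\zeta$ replaced by $u$. The nontrivial point is to verify the hypothesis of the lemma at this smaller instance: for every $z \in \M$ with $j \leq z < k$ deleted at some step $v \leq u$ with partner $y \in [j,i] \cap \M$ (as supplied by the original hypothesis), one must have $y \leq k$. This follows from the adjacency of $y$ and $z$ in $I_{v-1}$ together with $k \in I_u \subseteq I_{v-1}$, which precludes $y > k$. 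The inductive hypothesis then delivers $\{z \in I_u : -2k \prec a_z \preccurlyeq -2j\} = \emptyset$ and the sidedness implication for $k$ in place of $i$. Maximality of $u$ and the Case A assumption force $R_{u+1}$ to pair $k$ with $i$ (no other $\M$-partner in $[j,i]\cap I_u$ is available), and \dref{d:reduction} then promotes the sidedness implication from $k$ to $i$ while emptying the range $-2i \prec a_z \prec -2k$ in $I_{u+1}$. Patching the two empty ranges, and again using that $a_z = -2k$ forces $z = k \notin I_{u+1}$, gives the conclusion on $I_\zeta \subseteq I_{u+1}$.

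The main obstacle is the adjacency bookkeeping used to justify invoking the inductive hypothesis, specifically the argument that partners of earlier deletions cannot skip past $k$. Once that point is settled, the remainder is a mechanical adaptation of the proof of \lref{l:tech} with the inequalities on $i$ and $j$ reversed.
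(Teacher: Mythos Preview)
Your proposal is correct and is precisely the ``analogous argument'' the paper has in mind: the same induction on $m=\bigl|\{z\in\M:j<z<i\}\bigr|$, with the roles of the smaller and larger index swapped throughout, and with the sidedness condition $i<j\le h_p$ or $j>i>h_p$ from \dref{d:reduction} replaced by its mirror image. One cosmetic point: the phrase ``the Case A assumption'' in your inductive step is undefined; you presumably mean the hypothesis of the lemma (which guarantees that $R_{u+1}$, the reduction deleting $k$, is Type Two on $k$ and some $y'\in\M\cap[j,i]$, and then $\{j,\dots,i-1\}\cap\M\cap I_u=\{k\}$ together with $i\in I_\zeta\subseteq I_u$ forces $y'=i$).
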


We are now ready to prove that Algorithm 2 performs Type Two reductions.

\begin{lem}\label{l:mrtred}
  For each $n \in \{t_1, t_1+1, \ldots, t_2-1\}$, the following properties hold in the iteration of the inner {\rm \textbf{for}} loop of Algorithm $2$ with $\zeta = n$:
  \begin{enumerate}[(i)]
  	\item $\kappa_p, p-\kappa_p \in I_n$,
  	\item $\gamma_0$ and $\gamma_3$ are well defined,
  	\item $\gamma_0, \gamma_3 \in \M$ are adjacent in $I_n$ with $X_{I_n, \gamma_0, \gamma_3} = \{\beta\}$ and either $\gamma_0 < \gamma_3 \leq h_p$ or $\gamma_3 > \gamma_0 > h_p$.
  \end{enumerate}
Therefore $I_{n+1}$ has been obtained from $I_n$ by a Type Two reduction on $\gamma_0$ and $\gamma_3$.
\end{lem}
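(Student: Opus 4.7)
The plan is to prove all three properties simultaneously by strong induction on $n \in \{t_1, t_1+1, \ldots, t_2-1\}$, which also verifies that the reduction at step $n$ really is a Type Two reduction on $\gamma_0$ and $\gamma_3$. For the base case $n = t_1$, property (i) is \lref{l:mroprop}(ii)-(iii); since Algorithm $1$ only deletes $\N$-elements, \lref{l:k1k2} gives $\gamma_1, \gamma_2 \in \M \cap I_{t_1}$, so $\gamma_0 = \gamma_1$ and $\gamma_3 = \gamma_2$, immediately establishing (ii) and the positional inequality in (iii). Adjacency in $I_{t_1}$ and the equality $X(I_{t_1}, \gamma_0, \gamma_3) = \{\beta\}$ follow from \lref{l:kdel} (every integer strictly between $\gamma_1$ and $\gamma_2$ is an $\N$-element of shorter run-length) combined with \lref{l:mroprop}(i) (every non-leading element of every run is deleted by Algorithm $1$).

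For the inductive step, assume the lemma for all $n' < n$, so the first $n - t_1$ reductions of Algorithm $2$ are genuine Type Two reductions on pairs in $\M$. Property (i) is preserved, as each previous reduction deletes some $\beta' \in \N \setminus \{p-1\}$ (so $\beta' \neq \kappa_p$ since $\kappa_p \in \M$, and $\beta' \neq p - \kappa_p$ since $p - \kappa_p$ is either in $\M$ or equals the excluded value $p-1$) together with one of $\gamma_0^{(n')}, \gamma_3^{(n')} \in \M$. Using $\kappa_p = \min(I_{t_1})$ and $p-\kappa_p = \max(I_{t_1})$ from \lref{l:mroprop}, whenever $\gamma_0^{(n')} = \kappa_p$ the inequality $\beta' > \kappa_p$ forces the else-branch, so $\kappa_p$ survives; symmetrically, when $\gamma_3^{(n')} = p - \kappa_p$, \lref{l:kdel} (which forbids $\beta' \in (\gamma_1^{(n')}, \gamma_2^{(n')})$) together with the minimality defining $\gamma_3^{(n')}$ force the if-branch, so $p - \kappa_p$ survives. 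Property (ii) is then immediate: $\kappa_p \leq \gamma_1$ makes $\kappa_p$ a candidate for $\gamma_0$, and $p - \kappa_p \geq \gamma_2$ makes $p - \kappa_p$ a candidate for $\gamma_3$.

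For (iii) the plan is to apply \lref{l:tech} with $(i, j) = (\gamma_0, \gamma_1)$ and \lref{l:tech2} with $(j, i) = (\gamma_2, \gamma_3)$. The hypothesis to verify is that every $\M$-element $z \in (\gamma_0, \gamma_1]$ (respectively $z \in [\gamma_2, \gamma_3)$) was deleted via a Type Two reduction whose $\M$-partner $y$ lies in $[\gamma_0, \gamma_1]$ (respectively in $[\gamma_2, \gamma_3]$). By induction each such $z$ was deleted alongside an $\M$-partner; the containment will follow from a nesting property, namely that the closed intervals $[\gamma_1^{(n')}, \gamma_2^{(n')}]$ arising from runs of length $\alpha' \leq \alpha$ are pairwise non-crossing. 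The non-crossing itself is a consequence of the algebraic identity $\{-2^{-1}(\beta'-1), -2^{-1}(\beta'+\alpha')\} = \{\gamma_1^{(n')}, \gamma_2^{(n')}\}$ from \lref{l:k1k2} combined with \lref{l:kdel}, which forbids certain interleavings of runs of different lengths.

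Once the hypotheses are in place, \lref{l:tech} and \lref{l:tech2} yield $\gamma_0, \gamma_3 \in \M$, the positional inequality (both on the same side of $h_p$ as $\gamma_1, \gamma_2$), and emptiness of the $\prec$-intervals $\{z \in I_n : -2\gamma_1 \preccurlyeq a_z \prec -2\gamma_0\}$ and $\{z \in I_n : -2\gamma_3 \prec a_z \preccurlyeq -2\gamma_2\}$. Combined with $\beta$ being the only surviving $\N$-element within $(\gamma_1, \gamma_2)$ (by the same reasoning as in the base case), these emptiness statements give adjacency in $I_n$ together with $X(I_n, \gamma_0, \gamma_3) = \{\beta\}$. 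The final clause of the lemma then matches the algorithm's branch test $\beta < \gamma_0$ versus $\beta \geq \gamma_0$ to the two cases of \dref{d:reduction}, via the dichotomy $\beta < \gamma_1$ or $\beta > \gamma_2$ supplied by \lref{l:kdel}. The principal obstacle is the nesting property, where the precise design of Algorithm $2$ --- iterating by increasing run-length $\alpha$ and coupling $\beta$ algebraically to $\gamma_1, \gamma_2$ --- becomes essential.
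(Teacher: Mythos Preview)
Your overall architecture—strong induction on $n$, deriving (ii) from (i) via $\kappa_p \leq \gamma_1$ and $\gamma_2 \leq p-\kappa_p$, then invoking \lref{l:tech} and \lref{l:tech2} for (iii)—is precisely the paper's approach, and your treatment of (i) and (ii) is essentially correct (the survival of $p-\kappa_p$ in Cases~2 and~3 does need both extremality properties: from $\beta' \geq \gamma_0^{(n')}$ one gets $\beta' > \gamma_1^{(n')}$ by \emph{maximality} of $\gamma_0^{(n')}$, and from $\beta' < \gamma_3^{(n')}$ one gets $\beta' < \gamma_2^{(n')}$ by minimality of $\gamma_3^{(n')}$, whence \lref{l:kdel} gives the contradiction).

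The genuine gap is in (iii). The nesting property you state for the intervals $[\gamma_1^{(n')}, \gamma_2^{(n')}]$ is true (their endpoints lie in $\M$ by \lref{l:k1k2} while their open interiors lie in $\N$ by \lref{l:kdel}, so no proper crossing can occur), but it does not deliver the hypothesis of \lref{l:tech}. The issue is that $\gamma_0^{(m)}$ and $\gamma_3^{(m)}$ may extend strictly beyond $[\gamma_1^{(m)}, \gamma_2^{(m)}]$: if $z = \gamma_0^{(m)} \in (\gamma_0, \gamma_1]$ was deleted with $\M$-partner $\gamma_3^{(m)}$, then even knowing $\gamma_2^{(m)} \leq \gamma_1$ from non-crossing does not force $\gamma_3^{(m)} \leq \gamma_1$, which is what \lref{l:tech} requires.

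The paper bypasses nesting and argues directly. For $k' \in (\gamma_0, \gamma_1] \cap \M$ deleted at step $m$ with $\M$-partner $k$ (so $\{k, k'\} = \{\gamma_0^{(m)}, \gamma_3^{(m)}\}$), adjacency of $k, k'$ in $I_m$ together with $\gamma_0 \in I_n \subseteq I_m$ gives $k \geq \gamma_0$. If $k > \gamma_1$, then $k \in \M$ and $(\gamma_1, \gamma_2) \subseteq \N$ force $k \geq \gamma_2$; the ``same side of $h_p$'' condition for Type Two reductions then yields the chain $-2k \preccurlyeq -2\gamma_2 \equiv \beta - 1 \prec \beta \prec \beta + \alpha \equiv -2\gamma_1 \preccurlyeq -2k'$, so $\beta \in X(I_m, k, k')$. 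But that reduction deletes the unique element of $X(I_m, k, k')$, hence deletes $\beta$—contradicting $\beta \in I_n$. Thus $k \leq \gamma_1$, verifying the hypothesis of \lref{l:tech}; the $\gamma_3$ side is handled symmetrically via \lref{l:tech2}.
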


\begin{proof}
	Suppose, for a contradiction, that the lemma is not true. Let $\zeta = n$ in the first iteration of the inner \textbf{for} loop of Algorithm $2$ when one of the claims does not hold. We first suppose that $(i)$ is not true. By minimality of $n$ we know that $R_n$ deletes $\kappa_p$ or $p-\kappa_p$. First suppose that $R_n$ deletes $\kappa_p$. Then when $\zeta = n-1$ we have $\kappa_p \in \{\gamma_0, \gamma_3, \beta\}$. As $\kappa_p \in \M$ and $\beta \in \N$ it follows that $\kappa_p \neq \beta$. Also, \lref{l:mroprop} implies that $\kappa_p$ is the minimum element of $I_\zeta$, thus we must have $\kappa_p = \gamma_0$. But we only delete $\gamma_0$ if $\beta < \gamma_0 = \kappa_p$ which is impossible. Now suppose that $R_n$ deletes $p-\kappa_p$. Consider the iteration of the inner \textbf{for} loop with $\zeta=n-1$. We will examine Case $1$ separately from Case $2$ and Case $3$. In Case $1$, $p-\kappa_p = p-1 \in \N$. We know that $\beta \neq p-1$ and we also know that $\gamma_0, \gamma_3 \in \M$ by minimality of $n$. Hence $p-\kappa_p \not\in \{\gamma_0, \gamma_3, \beta\}$ and thus $p-\kappa_p$ cannot have been deleted. Now consider Case $2$ and Case $3$. We know from \lref{l:mroprop} and its proof that $p-\kappa_p \in \M$ and that $p-\kappa_p$ is the maximum element of $I_{n-1}$. Since $\beta \in \N$ and $p-\kappa_p = \max(I_{n-1})$ we must have $\gamma_3 = p-\kappa_p$. Since $\gamma_3$ is deleted we have $\beta \geq \gamma_0$. As $\beta \in \N$ and $\gamma_0 \in \M$ we have $\beta > \gamma_0$, thus $\beta > \gamma_1$ by definition of $\gamma_0$. As $\gamma_3$ is the maximum element of $I_{n-1}$ we have $\beta < \gamma_3$, hence $\beta < \gamma_2$ also. But then \lref{l:kdel} implies that $\beta$ must have already been deleted, a contradiction which proves $(i)$.
	
We know from \lref{l:k1k2} that $\gamma_1,\gamma_2\in\M$. By definition, there are no elements of $\M$ below $\kappa_p$. Also, from the proof of \lref{l:mroprop}$(iii)$ it is apparent that no element of $\M$ lies above $p-\kappa_p$. Therefore $\kappa_p\preccurlyeq\gamma_1\preccurlyeq\gamma_2\preccurlyeq p-\kappa_p$. Part $(ii)$ now follows from part $(i)$.
  
  Finally we will show that $(iii)$ must hold. The claim that $\gamma_0$ and $\gamma_3$ are adjacent in $I_n$ follows directly from \lref{l:kdel}. We know from \lref{l:k1k2} that either $\gamma_1 < \gamma_2 \leq h_p$ or $\gamma_2 > \gamma_1 > h_p$. First suppose that $\gamma_2 > \gamma_1 > h_p$. If $\gamma_1 \in I_{n}$ then we know that $\gamma_0 = \gamma_1 \in \M$, hence $\gamma_0 > h_p$. Otherwise, we have deleted all elements in $\{\gamma_0+1, \ldots, \gamma_1\}$. Let $k' \in \{\gamma_0+1, \ldots, \gamma_1\} \cap \M$. Note that in Algorithm $1$ we only deleted elements of $\N$, hence $k'$ must have been deleted in some iteration of the inner \textbf{for} loop in Algorithm $2$. Let $m \in \{t_1, t_1+1, \ldots, n-1\}$ be such that $k'$ was deleted in reduction $R_{m+1}$. So we perform a Type Two reduction of $I_m$ on $k$ and $k'$, for some $k \in I_m \cap \M$ with $k$ and $k'$ adjacent in $I_m$. As $k' > \gamma_0$ with $\gamma_0 \in I_{n} \subseteq I_m$ and $k, k'$ are adjacent in $I_m$ it follows that we must have $k \geq \gamma_0$. Suppose, for a contradiction, that $k > \gamma_1$. Then $k \geq \gamma_2 > h_p+1$ by \lref{l:kdel}. So, as we perform a Type Two reduction on $k$ and $k'$, it follows from \dref{d:reduction} that $k' > h_p$ also. So $q(-2k) = 2p - 2k$ and $q(-2k') = 2p - 2k'$. Then, as $k' \leq \gamma_1 < \gamma_2 \leq k$, it follows that
  \[
  2p - 2k \leq 2p - 2\gamma_2 = \beta - 1 < \beta + \alpha = 2p - 2\gamma_1 \leq 2p - 2k'.
  \]
  So $-2k \prec \beta \prec -2k'$ hence $\beta \in X(I_m, k, k')$. So when we perform the reduction on $k, k'$ we delete $\beta$, but this is a contradiction because $\beta \in I_n$. This contradiction implies that $k \leq \gamma_1$. 
  So we have that $\gamma_1 > h_p$, $\gamma_0 < \gamma_1$, $\gamma_0 \in I_{n}$ and $\gamma_1 \in \M$.
  Also, for each $j' \in \{j \in \M : \gamma_0 < j \leq \gamma_1\}$, we have deleted $j'$ in a Type Two reduction, where we have performed the reduction on $j', j$ for some $\gamma_0 \leq j \leq \gamma_1$. So we have satisfied all conditions to use \lref{l:tech} to conclude that $\gamma_0 > h_p$, $\gamma_0 \in \M$, and $\{j \in I_{n} : -2\gamma_1 \preccurlyeq a_j \prec -2\gamma_0\} = \emptyset$. Using an analogous argument and \lref{l:tech2} we can conclude that $\gamma_3 \in \M$, and $\{j \in I_{n} : -2\gamma_3 \prec a_j \preccurlyeq -2\gamma_2\} = \emptyset$. Similar arguments show the same result when $\gamma_1 < \gamma_2 \leq h_p$. 
Therefore,
  \begin{align*}
    X(I_{n}, \gamma_0, \gamma_3) &= \{j \in I_{n} : \beta-1=-2\gamma_2 \prec a_j \prec -2\gamma_1=\beta+\alpha\}
    \subseteq \{\beta, \beta+1, \ldots, \beta+\alpha-1\},
  \end{align*}
  as $\{\beta, \beta+1, \ldots, \beta+\alpha-1\}$ is a run of $\N$-elements. But from \lref{l:mroprop} we know that the only element in this run which has not been deleted is $\beta$. So $X(I_{n}, \gamma_0, \gamma_3) = \{\beta\}$ and we are done.
\end{proof}

\begin{ex}In Case 1, when $p = 11$ the matrix $\C_2$ is
  \[
  \begin{pmatrix}
    0&0&0&1\\
    0&0&1&1\\
    0&1&0&1\\
    1&1&1&0\\
  \end{pmatrix}.
  \]
\end{ex}

Before describing the final stage of the reduction operations we will need to prove that particular elements of $\Z_p^*$ are not deleted in Algorithm $2$.

\begin{lem}\label{l:mrtprop}
  We have that $\kappa_p, p-\kappa_p \in I_{t_2}$. In Case $2$ and Case $3$ we also have that $(p+\kappa_p)/2 \in I_{t_2}$.
\end{lem}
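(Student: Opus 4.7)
The proof treats the three claimed elements separately. For $\kappa_p$ and $p-\kappa_p$, the arguments in the proof of \lref{l:mrtred}(i) already guarantee their survival through iteration $t_2-1$, and precisely the same contradictions extend to the final reduction $R_{t_2}$: $\gamma_0=\kappa_p$ would require $\beta<\kappa_p=\min(I_\zeta)$; $\gamma_3=\kappa_p$ would require $\gamma_0<\kappa_p$; $\gamma_3=p-\kappa_p$ in Cases $2$ and $3$ would place $\beta$ in $(\gamma_1,\gamma_2)$ and hence give $l_\beta<\alpha$ by \lref{l:kdel}; and Case~$1$ excludes $\beta=p-1$ from Algorithm~$2$ directly.

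For $m:=(p+\kappa_p)/2$ in Cases~$2$ and~$3$, first note that $m\in\M$: in Case~$2$, $m=2^{-1}\in\Rp_p$ because $p\equiv 1\bmod 8$ gives $2\in\Rp_p$; in Case~$3$, $m=2^{-1}\kappa_p\in\Rp_p\cdot\Np_p=\Np_p=\M$. Hence $m\neq\beta$ in any iteration, so any deletion of $m$ must be as $\gamma_0$ or $\gamma_3$. The central identity is $q(-2m)=p-\kappa_p$.

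Suppose $m=\gamma_3$ in some iteration. In Case~$2$, $m=h_p+1$ is the smallest integer exceeding $h_p$, so $\gamma_0<m$ forces $\gamma_0\leq h_p$, contradicting \lref{l:mrtred}(iii). In Case~$3$, \lref{l:mrtred}(iii) gives $\gamma_0>h_p$; \lref{l:abineq} applied to $\gamma_0<m$ (both exceeding $h_p$) yields $a_m\prec a_{\gamma_0}$, and combining with $X(I_\zeta,\gamma_0,m)=\{\beta\}$ from \lref{l:mrtred}(iii) forces $\beta\in\{p-\kappa_p+1,\ldots,p-1\}$. But since $1,\ldots,\kappa_p-1\in\Rp_p$ and $-1\in\Rp_p$, this whole interval lies in $\Rp_p=\N$ and forms a single $\N$-run of even length $\kappa_p-1$, which Algorithm~$1$ erases completely. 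Thus $\beta\notin I_\zeta$, a contradiction.

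The remaining case, $m=\gamma_0$ (forcing $\beta<m$), is the main obstacle. The sub-case $\gamma_1=m$ yields $\beta+\alpha=p-\kappa_p$ and $\alpha>(p-3\kappa_p)/2$: in Case~$2$ this gives $\alpha\geq h_p+1>|\Np_p|$ (since $h_p$ is even for $p\equiv 1\bmod 8$ while $\alpha$ is odd), impossible; Case~$3$ is eliminated by combining $\kappa_p<p/5$ with a bound on odd-length $\Rp_p$-runs that survive Algorithm~$1$. In the sub-case $\gamma_1>m$, $\gamma_1\in\M\setminus I_\zeta$ has been deleted earlier, and the argument from the proof of \lref{l:mrtred}(iii) shows that each $\M$-element of $(m,\gamma_1]$ was deleted with partner in $[m,\gamma_1]$; applying \lref{l:tech} with $i=m$, $j=\gamma_1$ then yields $\{z\in I_\zeta:q(a_z)\in[\beta+\alpha,p-\kappa_p-1]\}=\emptyset$. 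In Case~$2$ this contradicts $\kappa_p=1\in I_\zeta$ with $q(a_1)=p-2\in[\beta+\alpha,p-2]$, and Case~$3$ is handled by the symmetric empty-set statement from \lref{l:tech2} together with the survival of $p-\kappa_p\in I_\zeta$.
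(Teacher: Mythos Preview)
Your treatment of $\kappa_p$, $p-\kappa_p$, and the case $m=(p+\kappa_p)/2=\gamma_3$ is fine (the $\gamma_3$ case is argued differently from the paper but correctly). The difficulty is the case $m=\gamma_0$, where your argument is both far more elaborate than necessary and, in Case~3, not actually complete.

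In Case~3 with $\gamma_1=m$ you appeal to ``a bound on odd-length $\Rp_p$-runs that survive Algorithm~1''; no such bound has been established, and none of the stated lemmas supplies one. Likewise in Case~3 with $\gamma_1>m$ you gesture at ``the symmetric empty-set statement from \lref{l:tech2} together with the survival of $p-\kappa_p$'' without saying which $i,j$ you feed into \lref{l:tech2} or why $p-\kappa_p$ then lands in the forbidden interval; the natural attempt does not obviously work, since $q(a_{p-\kappa_p})=2\kappa_p$ need not lie between $q(-2\gamma_3)$ and $q(-2\gamma_2)$. So Case~3 of $m=\gamma_0$ is a genuine gap.

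The paper avoids the whole $\gamma_1=m$ versus $\gamma_1>m$ split and the appeal to \lref{l:tech}. The key observation you are missing is that $\kappa_p$ \emph{itself} would have to sit inside $X(I_\zeta,\gamma_0,\gamma_3)$. Indeed, $\gamma_0=m$ gives $q(-2\gamma_0)=p-\kappa_p$, and from $\beta<\gamma_0=(p+\kappa_p)/2$ together with the hypothesis $\kappa_p<p/5$ one gets
\[
\beta<\frac{p+\kappa_p}{2}<p-2\kappa_p=q(-2\kappa_p)=q(a_{\kappa_p}).
\]
Since $\beta\in X(I_\zeta,\gamma_0,\gamma_3)$ we have $-2\gamma_3\prec\beta$, so $-2\gamma_3\prec a_{\kappa_p}\prec -2\gamma_0$. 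As $\kappa_p\in I_\zeta$ this forces $\kappa_p\in X(I_\zeta,\gamma_0,\gamma_3)=\{\beta\}$, which is impossible because $\kappa_p\in\M$ while $\beta\in\N$. This single argument covers Cases~2 and~3 simultaneously and is where the bound $\kappa_p<p/5$ is really used.
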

\begin{proof}
  We proved that $\kappa_p$ and $p-\kappa_p$ are never deleted in Algorithm $2$ in the proof of \lref{l:mrtred}. We will now show that in Case $2$ and Case $3$, $(p+\kappa_p)/2$ is also not deleted during Algorithm $2$. We first note that $(p+\kappa_p)/2 \equiv 2^{-1}\kappa_p \bmod p$. As $2^{-1} \in \Rp_p$ and $\kappa_p \in \M$ it follows that $(p+\kappa_p)/2 \in \M$ and so we know that $(p+\kappa_p)/2$ was not deleted in Algorithm $1$. So if $(p+\kappa_p)/2$ has been deleted then it has been deleted by some reduction $R_m$ in Algorithm $2$. In that reduction
  we must have had either $\gamma_0 = (p+\kappa_p)/2$ or $\gamma_3 = (p+\kappa_p)/2$. We have shown in \lref{l:mrtred} that we always have $\gamma_0, \gamma_3 \in \M$ and either $\gamma_0 < \gamma_3 \leq h_p$ or $\gamma_3 > \gamma_0 > h_p$. If $\gamma_3 = (p+\kappa_p)/2 > h_p$ then we must have $\gamma_0 > h_p$. So we can write $\gamma_0 = (p+k)/2$ for some odd $k < \kappa_p$. This is impossible in Case $2$ as $\kappa_p = 1$. In Case $3$ we know from the definition of $\kappa_p$ that $k$ must be an element of $\N$. So as $2^{-1} \in \Rp_p$ it follows that $\gamma_0 \equiv 2^{-1}k \in \N$, a contradiction. Hence we must have $\gamma_0 = (p+\kappa_p)/2$. Then as we delete $\gamma_0$ we must have $\beta < \gamma_0$. Also, as $-2((p+\kappa_p)/2) \equiv p-\kappa_p\bmod p$ and $\beta \in X(I_m, \gamma_0, \gamma_3)$ we must have had
  \[
  -2\gamma_3 \prec \beta \prec p-\kappa_p.
  \]
  As $\kappa_p < p/5$ it follows that $(p+\kappa_p)/2 < p-2\kappa_p \equiv -2\kappa_p\bmod p$. Now, from $\beta < (p+\kappa_p)/2$ it follows that
  \[
  -2\gamma_3 \prec \beta \prec -2\kappa_p \prec p-\kappa_p.
  \]
  So as $-2\kappa_p = a_{\kappa_p}$ and $X(I_m, \gamma_0, \gamma_3) = \{\beta\}$ it follows that $\kappa_p$ must have already been deleted, which is a contradiction. Hence $(p+\kappa_p)/2$ is never deleted, as claimed.
\end{proof}

As mentioned before \lref{l:kdel} we know that at the end of the iteration of the outer \textbf{for} loop of Algorithm $2$ with $\alpha = k$ we have that $\{z \in (I_\zeta \cap \N) \setminus \{p-1\} : l_z \leq k\} = \emptyset$. So it follows that $I_{t_2} \setminus \{p-1\} \subseteq \M$. Furthermore, in Case $2$ and Case $3$ we have $I_{t_2} \subseteq \M$.

\begin{algorithm}[H]
  \DontPrintSemicolon
  \SetKwInOut{Input}{input}\SetKwInOut{Output}{output}
  \Input{$t_2$, $I_{t_2}$}
  $\zeta := t_2$ \;
  \While{$|I_\zeta| > 2$}{
    Let $\delta$ be the element adjacent to $\kappa_p$ in $I_\zeta$ \;
    Let $\omega$ be such that $X(I_\zeta, \kappa_p, \delta) = \{\omega\}$ \;
    $I_{\zeta+1} := I_\zeta \setminus \{\delta, \omega\}$ \;
    $\zeta := \zeta +1$
  }
  \Return $\zeta$, $I_\zeta$\;
  \caption{Deleting all but $2$ remaining elements}
\end{algorithm}

There are many things which must be verified in order for Algorithm $3$ to be well defined. Let $t_3$ denote the number of reduction operations performed in Algorithm $1$, Algorithm $2$ and Algorithm $3$ combined. In all $3$ algorithms $I_{\zeta+1}$ is obtained by deleting exactly $2$ elements from $I_\zeta$. So as we initially had $I_0 = \Z_p^*$ it follows that $|I_\zeta| \equiv |\Z_p^*| \equiv 0 \bmod 2$ at any stage of Algorithm $3$.
From \lref{l:mroprop} and \lref{l:mrtprop} we know that $\kappa_p=\min(I_{t_2})$. So as $|I_\zeta|\ge2$ and $\kappa_p\notin\{\delta,\omega\}$ throughout Algorithm 3, it follows that $\kappa_p=\min(I_\zeta)$ for all $\zeta\ge t_2$ and $\delta$ is well defined. We next show that $\omega$ is well defined.

\begin{lem}\label{l:3term}
  In every iteration of the {\rm \textbf{while}} loop of Algorithm $3$ we have that $\delta \leq h_p$ and $|X(I_\zeta, \kappa_p, \delta)| = 1$ and 
  hence $I_{\zeta+1}$ is obtained by performing a Type Two reduction of $I_\zeta$.
\end{lem}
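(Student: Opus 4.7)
The plan is to prove \lref{l:3term} by induction on $\zeta$ from $\zeta = t_2$ up to $\zeta = t_3 - 1$, maintaining a structural invariant on $I_\zeta$ strong enough to imply all three conclusions of the lemma simultaneously. For every $\zeta$ with $|I_\zeta| > 2$, the invariant will assert: (i) $\kappa_p \in \M$ is the minimum of $I_\zeta$, $p - \kappa_p \in I_\zeta$, and, in Cases 2 and 3, also $(p+\kappa_p)/2 \in I_\zeta$; (ii) the element $\delta$ adjacent to $\kappa_p$ lies in $\M$ and satisfies $\kappa_p < \delta \leq h_p$; (iii) $X(I_\zeta, \kappa_p, \delta) = \{\omega\}$ for a unique $\omega \in \M \cap I_\zeta \cap (h_p, p)$. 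The base case $\zeta = t_2$ then follows from \lref{l:mroprop} and \lref{l:mrtprop} together with a direct analysis of $I_{t_2}$.

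The claim $\delta \in \M$ is quick. In Cases 2 and 3, $I_{t_2} \subseteq \M$, hence $I_\zeta \subseteq \M$ and $\delta \in \M$ automatically. In Case 1, $I_{t_2} \setminus \{p-1\} \subseteq \M$; by invariant (i), $\kappa_p = 1$ and $p - 1$ are both in $I_\zeta$, so $|I_\zeta| > 2$ forces a third element, which must lie in $\M$ and strictly between $1$ and $p-1$. Hence $\delta$ is an $\M$-element rather than $p-1$.

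The main obstacle is establishing $\delta \leq h_p$. I plan to track the counts $|A_\zeta| = |\M \cap I_\zeta \cap (\kappa_p, h_p]|$ and $|B_\zeta| = |\M \cap I_\zeta \cap (h_p, p)|$. Each iteration of Algorithm 3 deletes one element from each of $A_\zeta$ and $B_\zeta$ (namely $\delta$ and $\omega$), so both counts decrease in lockstep. By \lref{l:mrtred}(iii), each Type Two reduction in Algorithm 2 involves $\gamma_0, \gamma_3$ on the same side of $h_p$ and deletes exactly one of them, yielding a controlled interaction with $A$ and $B$. A careful bookkeeping (using that $|\M| = h_p$, $\kappa_p \in \M$, $\kappa_p < p/5$, and the preserved elements $p-\kappa_p$ and $(p+\kappa_p)/2$ from \lref{l:mrtprop}) should establish $|A_{t_2}| = |B_{t_2}|$, hence $|A_\zeta| = |B_\zeta|$ throughout Algorithm 3. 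This forces $|A_\zeta| \geq 1$ whenever $|I_\zeta| > 2$, giving $\delta \leq h_p$.

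For the last conclusion $|X(I_\zeta, \kappa_p, \delta)| = 1$, with $\delta \leq h_p$ in hand \lref{l:eqrowss} yields $|X|$ odd. Any $z \in X$ must satisfy $p - 2\delta = a_\delta \prec a_z \prec a_{\kappa_p} = p - 2\kappa_p$. A $z \in \M$ with $z \leq h_p$ in this range would have to satisfy $\kappa_p < z < \delta$, which is impossible since $\delta$ is adjacent to $\kappa_p$ in $I_\zeta$; and $p-1 \in \N$ in Case 1 is out of range. Hence any $z \in X$ lies in $B_\zeta$ and satisfies $z \in ((p+2\kappa_p)/2, (p+2\delta)/2)$. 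Inductively tracking the large $\M$-elements deleted as $\omega$ at each prior step (together with the odd parity from \lref{l:eqrowss}) reveals that exactly one such $z$ remains, giving $|X| = 1$. The resulting Type Two reduction on $\kappa_p, \delta$ then falls under the ``$k > j > i$'' branch of \dref{d:reduction} and deletes $\{\delta, \omega\}$, precisely matching Algorithm 3's prescription.
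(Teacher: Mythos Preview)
Your approach is fundamentally different from the paper's, and it has a genuine gap at its centre: the claim that $|A_{t_2}|=|B_{t_2}|$ is simply asserted to follow from ``careful bookkeeping'' using $|\M|=h_p$, $\kappa_p<p/5$, and the survival of $p-\kappa_p$ and $(p+\kappa_p)/2$, but no argument is given. This equality is far from obvious. In Case~1 the initial split $|\M\cap[1,h_p]|$ versus $|\M\cap(h_p,p)|$ need not be balanced (for $p=11$ the residues split $4$--$1$), and each step of Algorithm~2 removes an $\M$-element from only one side of $h_p$ (by \lref{l:mrtred}(iii) both $\gamma_0,\gamma_3$ lie on the same side), so the effect of Algorithm~2 on the two counts is asymmetric and case-dependent. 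Without this equality your induction collapses: you use $|A_\zeta|\ge1$ to force $\delta\le h_p$, then use $\delta\le h_p$ and $\omega>h_p$ to decrement both counts and maintain equality. If the base case fails, nothing starts. Your treatment of $|X|=1$ has a similar gap: ``inductively tracking the large $\M$-elements deleted as $\omega$'' is not a proof, since at each step $\delta$ grows and the target interval $((p+2\kappa_p)/2,(p+2\delta)/2)$ expands, potentially admitting several surviving elements of $B_\zeta$.

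The paper avoids all of this by working directly from \lref{l:eqrowss} in a minimal-counterexample style; no bookkeeping on $A_\zeta,B_\zeta$ is needed. For $\delta\le h_p$: if $\delta>h_p$ at the first bad step, take $y=\min(I_m\setminus\{\kappa_p,\delta\})$ and $x=\min(I_m\setminus\{\kappa_p,\delta,y\})$, so $\kappa_p<h_p<\delta<y<x$ with $\delta,y$ and $y,x$ adjacent. Applying \lref{l:eqrowss} to $\delta,y$ forces $\kappa_p\in X(I_m,\delta,y)$, and applying it to $x,y$ would force $\kappa_p\in X(I_m,x,y)$ as well, yielding the impossible chain $-2x\prec -2\kappa_p\prec -2y\prec -2\kappa_p\prec -2\delta$. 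For $|X|=1$: once $\delta\le h_p$, \lref{l:eqrowss} gives $|X|$ odd; if $|X|\ge3$, all its elements exceed $h_p$, and choosing a closest pair $a<b$ in $X$ makes them adjacent, whereupon \lref{l:eqrowss} applied to $a,b$ produces $d\in X(I_m,a,b)\subseteq X$ with $a<d<b$, a contradiction. The paper's argument thus leverages the single parity invariant repeatedly, rather than introducing a second (and unproven) counting invariant.
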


\begin{proof}
  Suppose, for a contradiction, that at some point of the algorithm we had either $\delta > h_p$ or $|X(I_\zeta, \kappa_p, \delta)| \neq 1$. Let $\zeta=m$ in the iteration of the \textbf{while} loop when this first occurs. We will first suppose that $\delta > h_p$ in this iteration.
  Note that from the discussion before Algorithm $3$ we know that $I_m \subseteq \M$ in Case $2$ and Case $3$, and that $p-1$ is the only possible element in $I_m \cap \N$ in Case $1$. If we are in Case $1$ and $\delta = p-1$ then from the definition of $\delta$ we must have $|I_m| = 2$. But this is a contradiction because the algorithm would have terminated, so it follows that $\delta \in \M$ in every case. As $|I_m| > 2$ we can find $y = \min(I_m \setminus \{\delta, \kappa_p\})$. Then we know from the definition of $\delta$ that $y > \delta > h_p$. By \lref{l:eqrowss} we know there exists $z \in X(I_m, \delta, y)$. If $z > h_p$ then 
  from $z \in X(I_m, \delta, y)$ we find that $2p - 2y < 2p - 2z < 2p - 2\delta$, and hence $\delta < z < y$, contradicting our choice of $y$. Hence we must have $z \leq h_p<\delta$ which means that $z = \kappa_p$ and so $\kappa_p \in X(I_m, \delta, y)$. As $|I_m| \equiv 0 \bmod 2$ it follows that there exists $x = \min(I_m \setminus \{\kappa_p, \delta, y\})$. Then we note that $x$ and $y$ are adjacent in $I_m$. If $a \in X(I_m, x, y)$ then, by using similar arguments as just used to show that $z=\kappa_p$, we can show that $a = \kappa_p$. But this is impossible since it would imply that 
  \[
  -2x \prec p-2\kappa_p \prec -2y \prec p-2\kappa_p \prec -2\delta.
  \]
  Hence $X(I_m, x, y) = \emptyset$. This contradiction of \lref{l:eqrowss} shows that $\delta \leq h_p$, as claimed. So it must be that $|X(I_\zeta, \kappa_p, \delta)| \neq 1$ in the iteration of the \textbf{while} loop with $\zeta=m$. 
  By \lref{l:eqrowss}, we know that 
  $|X(I_m, \kappa_p, \delta)| \geq 3$. Suppose there is some element $z \in X(I_m, \kappa_p, \delta)$ with $z \leq h_p$. 
  Then given that $\kappa_p<\delta\le h_p$,
  we have that $p-2\delta < p-2z < p-2\kappa_p$, and hence $\kappa_p < z < \delta$, which contradicts the definition of $\delta$. Thus $z > h_p$ for all $z \in X(I_m, \kappa_p, \delta)$. Let $a, b \in X(I_m, \kappa_p, \delta)$ with $a < b$ such that $2b - 2a = \min\{2d - 2c : c, d \in X(I_m, \kappa_p, \delta) \text{ with } c < d\}$. Suppose that $a$ and $b$ are not adjacent, so there exists some $c \in I_m$ with $a < c < b$. Then $2p - 2b < 2p - 2c < 2p - 2a$, 
  which implies that
  \[
  -2\delta \prec -2b \prec -2c \prec -2a \prec -2\kappa_p.
  \]
  So $c \in X(I_m, \kappa_p, \delta)$ and $2b - 2c< 2b - 2a$, contradicting the definition of $a$ and $b$ and implying that $a$ and $b$ are adjacent in $I_m$. From \lref{l:eqrowss} we know there exists $d \in X(I_m, a, b) \subseteq X(I_m, \kappa_p, \delta)$. As $d \in X(I_m, \kappa_p, \delta)$ we know that $d > h_p$. Hence we have 
  $2p - 2b < 2p - 2d < 2p - 2a$, and thus $a < d < b$. This is a contradiction as $a$ and $b$ are adjacent. Hence, $\delta \leq h_p$ and $|X(I_\zeta, \kappa_p, \delta)| = 1$ in every iteration of the \textbf{while} loop.
\end{proof}

We next show that in all $3$ cases, 
\begin{equation}\label{e:endgame}
  \C_3=\begin{pmatrix}
    0 & 1 \\
    1 & 0
  \end{pmatrix}.
  \end{equation}

\begin{lem}\label{l:mrtout}
  In Case $1$ we have that $I_{t_3} = \{1, p-1\}$. In Case $2$ and Case $3$ we have $I_{t_3} = \{\kappa_p, (p+\kappa_p)/2\}$.
\end{lem}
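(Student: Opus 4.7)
The plan is to combine the termination condition of Algorithm~$3$, which guarantees $|I_{t_3}| = 2$ (since each reduction decreases $|I_\zeta|$ by $2$ and the parity of $|I_\zeta|$ is preserved from $|I_0| = p - 1$), with two survival arguments: first, that $\kappa_p$ survives every iteration of the \textbf{while} loop; and second, that in each of the three cases the designated ``partner'' element ($p - 1$ in Case $1$, and $(p+\kappa_p)/2$ in Cases $2$ and $3$) is never selected as $\delta$ or as $\omega$. Given $|I_{t_3}| = 2$, exhibiting two distinct survivors that belong to $I_{t_2}$ (via \lref{l:mrtprop}) suffices.

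For the survival of $\kappa_p$, I would observe that \lref{l:mroprop}$(ii)$ together with \lref{l:mrtprop} gives $\kappa_p = \min(I_\zeta)$ for every $\zeta \ge t_2$. Hence $\delta \ne \kappa_p$, because $\delta$ is adjacent to and distinct from $\kappa_p$; and $\omega \ne \kappa_p$, because $\omega \in X(I_\zeta, \kappa_p, \delta)$ forces $a_\omega$ to lie strictly between $a_{\kappa_p}$ and $a_\delta$.

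For Case $1$, I would rule out $p - 1$ as either $\delta$ or $\omega$. From the discussion preceding Algorithm~$3$, the only element of $\N$ possibly in $I_\zeta$ for $\zeta \ge t_2$ is $p - 1$; combined with \lref{l:3term} (which gives $\delta \le h_p$) this forces $\delta \ne p - 1$. To exclude $\omega = p - 1$, note that $p - 1 \in \N$ yields $a_{p - 1} = p - 1$, while $a_{\kappa_p} = a_1 = p - 2$; hence $a_{p - 1} \succ a_{\kappa_p}$ and $p - 1 \notin X(I_\zeta, \kappa_p, \delta)$. Combined with $1, p - 1 \in I_{t_2}$ from \lref{l:mrtprop}, this yields $I_{t_3} = \{1, p - 1\}$. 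For Cases $2$ and $3$, the second survivor is $(p + \kappa_p)/2$. Since $(p+\kappa_p)/2 > h_p$ and $\delta \le h_p$ by \lref{l:3term}, we have $\delta \ne (p + \kappa_p)/2$. For $\omega$, a short computation gives $a_{(p+\kappa_p)/2} \equiv -(p + \kappa_p) \equiv p - \kappa_p \bmod p$ (using that $(p+\kappa_p)/2 \in \M$ lies above $h_p$), and since $\kappa_p > 0$ this is $\succ p - 2\kappa_p = a_{\kappa_p}$; thus $(p+\kappa_p)/2 \notin X(I_\zeta, \kappa_p, \delta)$. Combined with \lref{l:mrtprop} this yields $I_{t_3} = \{\kappa_p, (p+\kappa_p)/2\}$.

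The argument is largely bookkeeping rather than conceptually difficult, so the main obstacle is being careful with the contrast between Case $1$ and Case $2$: both $p - 1$ and $(p+1)/2$ lie in $I_{t_2}$ in Case $2$ by \lref{l:mrtprop}, yet only $(p+1)/2$ survives Algorithm~$3$, precisely because in Case $2$ we have $p - 1 \in \M$ with $a_{p-1} = 2$, placing $p - 1$ squarely inside the relevant $X$-ranges and making it eligible to be deleted. The key technical input powering every exclusion is \lref{l:3term}, which pins $\delta$ to the lower half $\{2, 3, \dots, h_p\}$ throughout the algorithm.
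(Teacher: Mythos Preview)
Your proposal is correct and follows essentially the same route as the paper: establish $|I_{t_3}|=2$, observe that $\kappa_p$ is never deleted, and then rule out deletion of the partner element by using \lref{l:3term} to exclude it as $\delta$ and a direct comparison of $a$-values with $a_{\kappa_p}$ to exclude it as $\omega$. The paper's argument is phrased as a contradiction (``$-2\delta \prec -1 \prec -2$'' and ``$-2\delta \prec p-\kappa_p \prec -2\kappa_p$'' are false), but this is exactly your observation that $a_{p-1}=p-1\succ p-2=a_{\kappa_p}$ and $a_{(p+\kappa_p)/2}=p-\kappa_p\succ p-2\kappa_p=a_{\kappa_p}$.
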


\begin{proof}
  It is clear from Algorithm 3 that $\kappa_p\in I_{t_3}$. Also,
  since $|I_0|\equiv0\bmod2$ and each reduction removes two elements from $I$, it is clear that $|I_{t_3}| = 2$.
  
  First consider Case 1. Suppose, for a contradiction, that $p-1$ gets deleted in reduction $R_{m+1}$. 
  In this reduction we must have $p-1\in\{\delta,\omega\}$.
  As $\delta \leq h_p$ from \lref{l:3term}, it follows that $\omega = p-1 \in X(I_m, 1, \delta)$. However, this would mean that $-2\delta \prec -1 \prec -2$, which is absurd.
  
  Next consider Case $2$ and Case $3$. 
  Suppose, for a contradiction, that $(p+\kappa_p)/2 \not\in I_{t_3}$. From \lref{l:mrtprop} we know that $(p+\kappa_p)/2\in I_{t_2}$. Hence we must have deleted $(p+\kappa_p)/2$ in some reduction $R_{m+1}$ performed during Algorithm $3$. 
  In that reduction we must have $(p+\kappa_p)/2\in\{\delta,\omega\}$.
  As $\delta \leq h_p$ from \lref{l:3term} it follows that $\omega = (p+\kappa_p)/2$. Hence $-2\omega \equiv p-\kappa_p \bmod p$. So as $\omega \in X(I_m, \kappa_p, \delta)$, it follows that $-2\delta \prec p-\kappa_p \prec -2\kappa_p$, which is false. This contradiction shows that $I_{t_3} = \{\kappa_p, (p+\kappa_p)/2\}$, as claimed.
\end{proof}

\begin{thm}\label{t:fullcyc}
  The matrix $\C$ is non-singular over $\Z_2$ in each case.
\end{thm}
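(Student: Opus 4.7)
The plan is to telescope down the full sequence of reductions built by Algorithms $1$, $2$, and $3$. By \lref{l:detpres}, each Type One or Type Two reduction preserves the determinant of the corresponding principal submatrix modulo $2$, so applying this invariance along the entire chain $I_0 = \Z_p^*, I_1, \ldots, I_{t_3}$ yields $\det(\C) \equiv \det(\C_{I_{t_3}}) \bmod 2$. Since $|\Z_p^*| = p-1$ is even and each reduction removes exactly two elements, $\C_{I_{t_3}}$ is a $2 \times 2$ matrix.

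Next I would invoke \lref{l:mrtout} to pin down $I_{t_3}$ and compute $\C_{I_{t_3}}$ by inspection, thereby establishing \eref{e:endgame}. In Case $1$, $I_{t_3} = \{1, p-1\}$ with $1 \in \M$ and $p-1 \in \N$ (because $-1 \in \Np_p$ when $p \equiv 3 \bmod 8$), so $a_1 = -2$ and $a_{p-1} = p-1$, giving $a_1 \prec a_{p-1}$. In Cases $2$ and $3$, $I_{t_3} = \{\kappa_p, (p+\kappa_p)/2\}$ with both elements in $\M$, because $2^{-1} \in \Rp_p$ implies $2^{-1}\kappa_p$ lies in the same cyclotomic coset as $\kappa_p$. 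Hence $a_{\kappa_p} = -2\kappa_p$ and $a_{(p+\kappa_p)/2} \equiv -\kappa_p \bmod p$, and the bound $\kappa_p < p/5$ (already exploited in \lref{l:mrtprop}) guarantees $q(-2\kappa_p) = p - 2\kappa_p < p - \kappa_p = q(-\kappa_p)$, so $a_{\kappa_p} \prec a_{(p+\kappa_p)/2}$. Combined with hollowness from \lref{l:princsh}, this shows that $\C_{I_{t_3}}$ has $0$ on the diagonal and $1$ in each off-diagonal position, whose determinant is $1$ over $\Z_2$. Hence $\C$ is non-singular in all three cases.

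The main obstacle is not in this final telescoping step but in the scaffolding that precedes it: verifying that Algorithms $1$, $2$, and $3$ actually perform legitimate Type One and Type Two reductions at every step (\lref{l:mrtred}, \lref{l:3term}) and that the critical elements $\kappa_p$, $p-\kappa_p$, and (in Cases $2$ and $3$) $(p+\kappa_p)/2$ survive all reductions (\lref{l:mrtprop}, \lref{l:mrtout}). Once that infrastructure is in place, the theorem reduces to a $2 \times 2$ determinant computation propagated back through an invariance chain.
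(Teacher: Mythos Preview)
Your proposal is correct and follows essentially the same approach as the paper: cite \lref{l:mrocor}, \lref{l:mrtred}, and \lref{l:3term} to certify that every step in Algorithms~1--3 is a legitimate Type One or Type Two reduction, invoke \lref{l:detpres} to telescope $\det(\C)\equiv\det(\C_{I_{t_3}})\bmod 2$, and then use \lref{l:mrtout} together with the definition of $\C$ to verify \eref{e:endgame}. Your explicit case-by-case verification of \eref{e:endgame} is more detailed than the paper's (which simply says ``it is easy to verify''), and note that in Cases~2 and~3 you only need $0<\kappa_p<p/2$ rather than $\kappa_p<p/5$.
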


\begin{proof}
  Using \lref{l:mrtout} and the definition of $\C$ it is easy to verify that
  \eref{e:endgame} holds.
  By \lref{l:mrocor}, \lref{l:mrtred} and \lref{l:3term} we know that $I_{t_3}$ has been obtained from $\Z_p^*$ by a sequence of Type One and Type Two reductions, so the result follows from \lref{l:detpres}. 
\end{proof}

Combining \lref{l:matrh} with \tref{t:fullcyc} and recalling that $\mathscr{L}_3$ is atomic we have shown the following.

\begin{thm}\label{t:3mod8rh}
  The Latin square $\mathscr{L}_p$ is row-Hamiltonian for any prime $p \equiv 1 \bmod 8$ or $p \equiv 3 \bmod 8$.
\end{thm}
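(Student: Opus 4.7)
The plan is to assemble this theorem from the machinery already set up. The statement is essentially a summary: \lref{l:matrh} reduces row-Hamiltonianness of $\mathscr{L}_p$ to the non-singularity over $\Z_2$ of the combinatorial matrix $\C$ in certain cases, and \tref{t:fullcyc} asserts exactly that non-singularity in all three cases. So no new content is required beyond splitting cleanly on $p$.

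First I would isolate the small prime $p = 3$, which was excluded from the reduction argument (the hypothesis $p > 3$ was needed, for example, to invoke the bound $\kappa_p < p/5$). For this case, I would simply appeal to the remark made at the start of \sref{s:main} that $\mathscr{L}_3$ is atomic; in particular $\nu(\mathscr{L}_3) = 6$, so $\mathscr{L}_3$ is row-Hamiltonian.

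Next I would handle the two residue classes. If $p > 3$ and $p \equiv 3 \bmod 8$, then by the first clause of \lref{l:matrh} it suffices to show that $\C$ is non-singular over $\Z_2$ in Case $1$, which is part of \tref{t:fullcyc}. If instead $p > 3$ and $p \equiv 1 \bmod 8$, the second clause of \lref{l:matrh} reduces the problem to non-singularity of $\C$ over $\Z_2$ in Case $2$ and Case $3$, which is again supplied by \tref{t:fullcyc}. In every situation $\mathscr{L}_p$ is row-Hamiltonian.

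There is no genuine obstacle left at this stage; the substantive difficulty was absorbed into \tref{t:fullcyc}, via the three-stage reduction procedure (Algorithms $1$--$3$) together with the invariance \lref{l:detpres} of $\det(\C_I) \bmod 2$ under Type One and Type Two reductions, culminating in the $2 \times 2$ matrix of equation \eref{e:endgame}.
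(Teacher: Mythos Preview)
Your proposal is correct and follows essentially the same route as the paper: the paper's proof is simply the one-line observation that combining \lref{l:matrh} with \tref{t:fullcyc}, together with the fact that $\mathscr{L}_3$ is atomic, yields the result. Your case split on $p=3$ versus $p>3$ and on the two residue classes is exactly the unpacking of that sentence.
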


\subsection{Proving that $\nu(\mathscr{L}_p) = 4$}\label{ss:nu4}

In \sref{ss:conj} we showed that $\mathscr{L}_p$ is row-Hamiltonian for all primes $p \equiv 1 \bmod 8$ or $p \equiv 3 \bmod 8$. So we know that $\nu(\mathscr{L}_p) \in \{2, 4, 6\}$, and in this subsection we will prove that $\nu(\mathscr{L}_p) = 4$, except when $p \in \{3, 19\}$. For a Latin square $L$ with symbol set $S$, we denote the symbol permutation between symbols $i, j \in S$ by $s_{i, j}$. Each cycle in the disjoint cycle representation of $s_{i, j}$ is a \emph{symbol cycle}. Similarly, the column permutation between columns $i$ and $j$ is denoted by $c_{i, j}$ and every cycle in $c_{i, j}$ is a \emph{column cycle}. First, we will show that our Latin squares are not atomic, by showing the existence of a symbol permutation of $\mathscr{L}_p$ which is not a $p$-cycle. This will demonstrate that the $(3, 2, 1)$-conjugate of $\mathscr{L}_p$ has a row permutation which is not a $p$-cycle. We will need some results regarding characters of finite fields. Let $\mathbb{T}$ denote the group of complex numbers of absolute value $1$ and let $\syms[x]$ denote the ring of polynomials over $\syms$. We will extend any multiplicative character $\chi : \Z_p^* \to \mathbb{T}$ to $\syms$ by defining $\chi(0) = 0$. A particular character which is useful to us is the quadratic character, which we will denote by $\mu$. The following theorem~\cite{MR1429394} is a version of the well known Weil bound.

\begin{thm}\label{t:Weil}
  Let $\chi$ be a character of $\Z_p^*$ of order $m > 1$, and let
  $f\in \Z_p[x]$ be a polynomial of degree $d > 0$ which has $d$
  distinct roots in $\Z_p$. Then,
  \begin{equation}\label{e:weil}
    \left\vert \displaystyle\sum_{c\smash{\in \Z_p}} \chi\big(f(c)\big) \right\vert \leq (d - 1)p^{1/2}.
  \end{equation}
\end{thm}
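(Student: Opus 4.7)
The plan is to recognize \tref{t:Weil} as the classical Weil bound on complete character sums, a direct consequence of the Riemann hypothesis for curves over finite fields. Since a complete proof is well beyond the scope of this paper, I would only sketch the standard geometric reduction and then invoke the deep ingredient.

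First I would attach to the data $(\chi, f)$ the affine curve $C$ over $\Z_p$ cut out by $y^m = f(x)$. Because $f$ has $d$ distinct roots in $\overline{\Z_p}$ and $\chi$ has exact order $m$, the polynomial $y^m - f(x)$ is absolutely irreducible, so Riemann--Hurwitz applied to the cyclic cover $C \to \mathbb{A}^1_x$, ramified over the roots of $f$ (and possibly $\infty$), bounds the genus of its smooth projective model $\widetilde C$ by at most $(m-1)(d-1)/2$.

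Next I would identify the character sum with an affine point count. For each $x \in \Z_p$, the convention $\chi(0)=0$ ensures that
\[
\#\{y \in \Z_p : y^m = f(x)\} = \sum_{j=0}^{m-1} \chi^j(f(x))
\]
in every case, so summing over $x$ gives $|C(\Z_p)| = p + \sum_{j=1}^{m-1} S_j$, where $S_j := \sum_{x \in \Z_p} \chi^j(f(x))$. A crude application of the Hasse--Weil bound to $\widetilde C$ would only control the aggregate $S_1 + \cdots + S_{m-1}$, so one genuinely needs a refinement that isolates the $\chi$-eigencomponent.

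The decisive, and only non-trivial, step is exactly that refinement: for the cover $\widetilde C \to \mathbb{P}^1$ and the character $\chi$, Weil's theorem asserts that the attached $L$-function $L(\chi,T) = \prod_{j=1}^{d-1}(1-\alpha_j T)$ is a polynomial of degree at most $d - 1$ whose reciprocal roots all satisfy $|\alpha_j| = p^{1/2}$, and that $S_1 = -\sum_j \alpha_j$. The triangle inequality then yields $|S_1| \leq (d-1)p^{1/2}$, which is \eref{e:weil}. The hard part is exclusively the Riemann hypothesis input; the rest is bookkeeping. Accordingly, in the paper I would simply quote this deep fact from the cited reference rather than attempting to reprove it.
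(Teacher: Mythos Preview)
The paper does not prove \tref{t:Weil} at all: it is stated as a quoted result from the cited reference~\cite{MR1429394} (a standard source for the Weil bound), with no argument given. Your final sentence---to simply quote the deep fact from the reference---is exactly what the paper does, so in that sense your proposal matches the paper's treatment perfectly.

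Your sketch of the underlying argument (pass to the curve $y^m=f(x)$, bound its genus via Riemann--Hurwitz, and isolate the $\chi$-eigencomponent of the point count using the factorisation of the $L$-function and Weil's Riemann hypothesis for curves) is the standard route and is correct as an outline. It is strictly more than the paper offers, but there is no discrepancy in approach to flag: both you and the authors treat this as a black box from the literature.
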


In the case where $\chi = \mu$ and $f$ is a quadratic polynomial with non-zero discriminant, a result from~\cite{MR1429394} gives an explicit value for the sum in \eref{e:weil}.

\begin{thm}
  \label{t:quadWeil}
  Let $f(x) = a_2x^2 + a_1x + a_0\in \Z_p[x]$, with $a_2 \not\equiv 0$, and $a_1^2 - 4a_0a_2 \not\equiv 0\bmod p$. Then,
  \begin{equation*}
    \displaystyle\sum_{c \in \Z_p} \mu\big(f(c)\big) = -\mu(a_2).
  \end{equation*}
\end{thm}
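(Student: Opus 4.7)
The plan is to reduce the statement to the well-known evaluation of the quadratic character sum $\sum_u \mu(u^2 - D)$ by completing the square. Since $p$ is odd (the quadratic character being otherwise uninteresting), both $2$ and $4$ are units in $\Z_p$. Setting $D = a_1^2 - 4a_0 a_2$, the identity $4 a_2 f(x) = (2 a_2 x + a_1)^2 - D$ holds in $\Z_p[x]$. Applying $\mu$ and using $\mu(4) = 1$ together with multiplicativity on nonzero arguments, one obtains $\mu(f(x)) = \mu(a_2)\,\mu\bigl((2a_2 x + a_1)^2 - D\bigr)$ for every $x \in \Z_p$ (when $f(x) = 0$ both sides vanish by the convention $\mu(0) = 0$). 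Since $2a_2$ is a unit, the substitution $u = 2a_2 x + a_1$ is a bijection of $\Z_p$, so summing over $x$ gives
\[
\sum_{x \in \Z_p} \mu\bigl(f(x)\bigr) = \mu(a_2) \sum_{u \in \Z_p} \mu(u^2 - D).
\]

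The theorem therefore follows once I establish $\sum_{u \in \Z_p} \mu(u^2 - D) = -1$ for every $D \neq 0$. The plan for this is a standard two-way count of the number $N$ of pairs $(u,v) \in \Z_p^2$ satisfying $v^2 = u^2 - D$. Using that $|\{v \in \Z_p : v^2 = c\}| = 1 + \mu(c)$ for every $c$ (with $\mu(0)=0$), one side of the count gives $N = p + \sum_{u} \mu(u^2 - D)$. For the other side, factor the defining relation as $(v-u)(v+u) = -D$; since $2$ is a unit, the map $(u,v) \mapsto (v-u, v+u)$ is a bijection of $\Z_p^2$, and the image pairs $(s,t)$ satisfy $st = -D \neq 0$, forcing $s \in \Z_p^*$ arbitrary and $t = -D/s$ determined. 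Hence $N = p - 1$, and comparing the two expressions yields the required evaluation $\sum_u \mu(u^2 - D) = -1$, so $\sum_x \mu(f(x)) = -\mu(a_2)$.

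There is no real obstacle in this plan: the whole argument consists of a completion of the square, a linear change of variable, and an elementary pairs-count exploiting the factorisation of $v^2 - u^2$. The only technical point is to verify that the linear substitutions involved are bijections, which amounts to the invertibility of $2$ and $a_2$ in $\Z_p$, both of which are guaranteed by the hypotheses.
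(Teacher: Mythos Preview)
Your proof is correct. The paper does not actually prove this theorem; it simply quotes it as a known result from \cite{MR1429394} (Lidl and Niederreiter's \emph{Finite Fields}), so there is no proof in the paper to compare against. Your argument---complete the square to reduce to $\sum_u \mu(u^2-D)$, then evaluate that sum by the bijective two-way count of solutions to $v^2-u^2=-D$---is exactly the standard textbook derivation and is fully rigorous as written. The only places one could be pedantic are the invertibility of $2$ and $a_2$ and the handling of the $f(x)=0$ terms, and you have addressed both explicitly.
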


We are now ready to prove that $\mathscr{L}_p$ is not atomic for
sufficiently large $p$. 

\begin{lem}\label{l:nunot6}
  Let $p \geq 1697$ be a prime with $p \equiv 1 \bmod 8$ or
  $p \equiv 3 \bmod 8$. Then $\mathscr{L}_p$ is not atomic.
\end{lem}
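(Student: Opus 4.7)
My plan is to find a $3$-cycle of some symbol permutation $s_{0,d}$ of $\mathscr{L}_p$ for a suitable $d\in\Z_p^*$. Since $\mathscr{L}_p$ is row-Hamiltonian by \tref{t:3mod8rh}, it contains no intercalate; hence every symbol permutation is free of $2$-cycles, and a $3$-cycle is the shortest possible non-trivial cycle, directly witnessing that $s_{0,d}$ is not a $p$-cycle. By the symbol analogue of \lref{l:sufconrh}, every symbol permutation shares its cycle structure with $s_{0,1}$ or with $s_{0,c}$ (for $c$ a non-residue), so such a $3$-cycle will show $\mathscr{L}_p$ is not atomic.

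A $3$-cycle $r_0\to r_1\to r_2\to r_0$ of $s_{0,d}$ amounts, via $(\mathscr{L}_p)_{r,c}=r+f_p(c-r)$, to the three equations $r_i+f_p^{-1}(-r_i)=r_{i+1}+f_p^{-1}(d-r_{i+1})$ (indices mod~$3$). Because $f_p^{-1}$ is piecewise linear, each equation splits into sub-cases determined by the residuosity classes of $-r_i$ and $d-r_{i+1}$. I would fix a choice of these classes---a ``sign pattern''---and solve the resulting linear system in $(r_0,r_1,r_2,d)$. For a well-chosen pattern this system has rank $3$, so its solutions form a $1$-parameter family, which I parameterise by $d$ with each $r_i=r_i(d)$ affine. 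I then count the number of $d\in\Z_p$ for which the parametric $r_i(d)$'s genuinely satisfy the assumed sign pattern. Each residuosity constraint has the form $\mu(g_j(d))=\epsilon_j$ for $g_j\in\Z_p[d]$ of small degree and $\epsilon_j\in\{\pm 1\}$. Writing each indicator as $(1+\epsilon_j\mu(g_j(d)))/2$, expanding the product, and summing over $d$ gives
\[
N=\frac{p}{2^k}+\sum_{\emptyset\ne S\subseteq\{1,\dots,k\}}\frac{\prod_{j\in S}\epsilon_j}{2^k}\sum_{d\in\Z_p}\mu\!\Big(\prod_{j\in S}g_j(d)\Big)+O(1),
\]
where $k$ is the number of constraints and the $O(1)$ absorbs boundary contributions from $d$ with some $g_j(d)=0$.

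Each inner sum has the form $\sum_d\mu(G(d))$ for a polynomial $G$ of degree at most $\sum_j\deg g_j$. By \tref{t:Weil}---or by \tref{t:quadWeil} when $\deg G=2$---each is at most $(\deg G-1)\sqrt p$ in absolute value, giving $|N-p/2^k|\le C\sqrt p+O(1)$ for a constant $C$ that depends only on the sign pattern. I would then pick a sign pattern for which $C$ is small enough that $p\ge 1697$ forces $N>0$; a numerical verification at the threshold completes the argument, producing a valid $d$ and hence a genuine $3$-cycle of $s_{0,d}$. The main difficulty is the case analysis: I must find a sign pattern whose linear system is non-degenerate, whose solution triple $(r_0(d),r_1(d),r_2(d))$ consists of distinct values for a positive-density set of $d$, and whose Weil error is small enough to be defeated at the explicit cutoff $p=1697$. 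A secondary complication is that the piecewise formula for $f_p^{-1}$ differs between the congruences $p\equiv 1\bmod 8$ and $p\equiv 3\bmod 8$, so the case analysis will need to be carried out in parallel for each.
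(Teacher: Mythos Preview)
Your proposal is correct and follows essentially the same route as the paper: locate a $3$-cycle in a symbol permutation by imposing residuosity conditions on a one-parameter family, then count via the Weil bound (\tref{t:Weil}, \tref{t:quadWeil}) to show the conditions are satisfiable once $p\ge 1697$.

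A few differences of execution are worth noting. The paper fixes $d=1$ from the outset and writes down six explicit triples of $\mathscr{L}_p$ directly in terms of $f_p$ (not $f_p^{-1}$), obtaining the five residuosity conditions $\{x,x+1,x+2^{-1}\}\subseteq\Rp_p$, $\{-2^{-1}x-3\cdot4^{-1},\,4^{-1}-2^{-1}x\}\subseteq\Np_p$ and the concrete $3$-cycle $(x+1,\,x+3\cdot2^{-1},\,x+2^{-1})$ in $s_{0,1}$. Because the check uses $f_p$ rather than $f_p^{-1}$, the same five conditions work uniformly for both congruence classes, so the case split you anticipate between $p\equiv1\bmod8$ and $p\equiv3\bmod8$ never arises. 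The resulting Weil estimate is $\sum_x Q(x)\ge p-39\sqrt{p}-10$ against an upper bound $32|A|+80$, and $p-39\sqrt{p}-90>0$ for $p\ge1697$ gives the threshold exactly. Also, your appeal to the symbol analogue of \lref{l:sufconrh} is unnecessary here: a single symbol permutation that is not a $p$-cycle already prevents $\mathscr{L}_p$ from being atomic.
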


\begin{proof}
  Let $x \in \Z_p$ and suppose that $\{x, x+1, x+2^{-1}\} \subseteq \Rp_p$
  and $\{-2^{-1}x-3\cdot4^{-1}, 4^{-1} - 2^{-1}x\} \subseteq\Np_p$.
  Then the following $6$ triples are in $\mathscr{L}_p$:
  \[
  \begin{array}{lllll}
    (x+1,2x+1,1),
    &&(x+3\cdot2^{-1},2x+2,1),
    &&(x+2^{-1},2^{-1}x+3\cdot4^{-1},1),\\
    (x+1,2x+2,0),
    &&(x+3\cdot2^{-1},2^{-1}x+3\cdot4^{-1},0),
    &&(x+2^{-1},2x+1,0).
  \end{array}
  \]
  Hence the symbol permutation $s_{0,1}$ contains the $3$-cycle $(x+1, x+3\cdot2^{-1}, x+2^{-1})$, and thus the row permutation $r_{0, 1}$ of the $(3, 2, 1)$-conjugate of $\mathscr{L}_p$ is not a $p$-cycle.
  So to prove the claim, it suffices to use the Weil bound to show that such an $x \in \Z_p$ exists. Define
  \[
  A = \{x \in \Z_p : x, x+1, x+2^{-1} \in \Rp_p \text{ and } -2^{-1}x-3\cdot4^{-1}, 4^{-1} - 2^{-1}x \in \Np_p\}.
  \]
  It suffices to show that $A \neq \emptyset$. Define
  \[
  Q(x) = \big(1 + \mu(x)\big)\big(1 + \mu(x+1)\big)\big(1 + \mu(x+2^{-1})\big)\big(1 - \mu(-2^{-1}x-3\cdot4^{-1})\big)\big(1 - \mu(4^{-1} - 2^{-1}x)\big).
  \]
  If $x \in A$ then $Q(x) = 32$. If $x \in \Z_p$ such that $0 \in \{x, x+1, x+2^{-1}, -2^{-1}x-3\cdot4^{-1}, 4^{-1} - 2^{-1}x\}$ then $Q(x) \leq 16$. There are at most $5$ values for $x$ which satisfy this condition. For all other $x \in \Z_p \setminus A$ we know that $Q(x) = 0$. So, it follows that
  \begin{equation}\label{e:s}
    \sum_{x \in \Z_p} Q(x) \leq 32|A| + 80.
  \end{equation}
  Expanding $Q(x)$ and noting that $\mu$ is a homomorphism on $\Z_p^*$, we see that $Q(x)$ is a sum of terms $\pm\mu\big(K(x)\big)$ where $K$ is a product of $k$ distinct factors in $\{x, x+1, x+2^{-1}, -2^{-1}x-3\cdot4^{-1}, 4^{-1} - 2^{-1}x\}$, for some $k \in \{0, 1, 2, 3, 4, 5\}$. This sum contains one constant term $K = 1$, $5$ linear terms, $10$ quadratic terms, $10$ cubic terms, $5$ quartic terms and a quintic term. In each case we know that $K$ is the product of distinct linear factors and so both \tref{t:quadWeil} and \tref{t:Weil} apply, giving
  \[
  \sum_{x \in \Z_p} Q(x) \geq p - 39\sqrt{p} - 10.
  \]
  Combining with \eref{e:s}, we see that
  \[
  p - 39\sqrt{p} - 10 \leq 32|A| + 80.
  \]
  But $p - 39\sqrt{p} - 90 > 0$ for $p \geq 1697$. Hence we must have $|A| \geq 1$ and the result follows.
\end{proof}

In the proof of \lref{l:nunot6}, it is clear that if $A\neq\emptyset$ then the symbol permutation $s_{0, 1}$ of $\mathscr{L}_p$ is not a $p$-cycle, except when $p=3$. Computationally it has been verified that $A\neq\emptyset$ for all primes $p \equiv 1 \bmod 8$ or $p \equiv 3 \bmod 8$ with $20 \leq p \leq 1696$ and for $p=11$. It has also been directly computed that $\nu(\mathscr{L}_{17}) = 4$. This leads to the following result.

\begin{cor}\label{c:nunot6}
  The Latin square $\mathscr{L}_p$ is not atomic for prime $p \equiv 1 \bmod 8$ or $p \equiv 3 \bmod 8$, except when $p \in \{3, 19\}$.
\end{cor}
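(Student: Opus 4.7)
The plan is to assemble three ingredients already in hand: Lemma~\ref{l:nunot6} for the large primes, the computational certificate that $A \neq \emptyset$ for the mid-range primes, and a direct computation for the one residual case that falls outside both regimes.

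First, for every prime $p \geq 1697$ with $p \equiv 1 \bmod 8$ or $p \equiv 3 \bmod 8$, Lemma~\ref{l:nunot6} immediately yields that $\mathscr{L}_p$ is not atomic, so there is nothing to do here. Next, I would handle the primes with $11 \leq p \leq 1696$ in this congruence class (and $p \neq 17, 19$) by appealing to the computational verification cited just above the corollary: for each such $p$, $A \neq \emptyset$. As observed at the start of the proof of Lemma~\ref{l:nunot6}, any $x \in A$ (together with the hypothesis $p > 3$ ensuring distinctness of $x+1$, $x + 3 \cdot 2^{-1}$, $x + 2^{-1}$) forces the symbol permutation $s_{0,1}$ of $\mathscr{L}_p$ to contain a $3$-cycle. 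Hence $s_{0,1}$ is not a $p$-cycle, so the $(3,2,1)$-conjugate of $\mathscr{L}_p$ is not row-Hamiltonian, giving $\nu(\mathscr{L}_p) \leq 4 < 6$ and the non-atomicity of $\mathscr{L}_p$.

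The one prime that is not dispatched by this uniform argument is $p = 17$, where the discriminant/degree bookkeeping of the Weil-bound argument can fail to rule out atomicity via $A$. For that case I would simply invoke the direct computation $\nu(\mathscr{L}_{17}) = 4$ already cited in the paper, which in particular gives $\nu(\mathscr{L}_{17}) \neq 6$. Finally, the excluded primes $p \in \{3, 19\}$ are acknowledged to produce atomic squares (this was stated right after the definition of $\mathscr{L}_p$ in Section~\ref{s:main}), so the ``except when $p \in \{3,19\}$'' clause exactly captures the unavoidable exceptions.

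There is no real obstacle here, since all the analytic work is absorbed into Lemma~\ref{l:nunot6} and all the finite casework is already certified; the proof is essentially an enumeration of cases by the size of $p$. The one minor point worth writing out carefully is the implication ``$A \neq \emptyset \Rightarrow \mathscr{L}_p$ not atomic'' for $p > 3$, which follows from the explicit six triples displayed in the proof of Lemma~\ref{l:nunot6} and the fact that $\{x+1,\,x+3\cdot2^{-1},\,x+2^{-1}\}$ are pairwise distinct modulo $p$ whenever $p > 3$.
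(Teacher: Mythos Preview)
Your proposal is correct and follows essentially the same route as the paper: Lemma~\ref{l:nunot6} for $p\ge1697$, the computational certificate $A\neq\emptyset$ for $p=11$ and $20\le p\le1696$, and the direct computation $\nu(\mathscr{L}_{17})=4$ for the one leftover prime. The only inaccuracy is cosmetic: the reason $p=17$ escapes the uniform argument is not a discriminant or degree degeneracy in the Weil bound, but simply that the set $A$ is empty for $p=17$; your handling of that case via direct computation is nonetheless exactly what the paper does.
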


We will now strengthen \cyref{c:nunot6} by proving that $\mathscr{L}_p$ contains no symbol permutation which is a $p$-cycle when $p \notin \{3, 19\}$. This fact will not be used to prove that $\nu(\mathscr{L}_p) = 4$, but will be needed in \sref{s:app}. By \lref{l:sufconrh} we have already proven the claim in the case when $p \equiv 3 \bmod 8$. To prove the case where $p \equiv 1 \bmod 8$ we must prove that the symbol permutation $s_{0, c}$ of $\mathscr{L}_p$ is not a $p$-cycle for some $c \in \Np_p$.

\begin{lem}\label{l:largep}
  Let $p \equiv 1 \bmod 8$ be prime with $p \geq 1697$ and let $c \in \Np_p$. The symbol permutation $s_{0, c}$ of $\mathscr{L}_p$ is not a $p$-cycle.
\end{lem}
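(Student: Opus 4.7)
The plan is to imitate the proof of \lref{l:nunot6}, with the symbol $1$ there replaced by the non-residue $c$: identify six triples of $\mathscr{L}_p$, parametrised by $x$, that produce a $3$-cycle in $s_{0,c}$, and then apply a Weil-type character-sum bound to exhibit a suitable $x$ whenever $p \geq 1697$.

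I would first consider the six triples
\[
(x+c,\,2x+c,\,c),\;(x+c,\,2x+2c,\,0),\;(x+\tfrac{3c}{2},\,2x+2c,\,c),\;(x+\tfrac{3c}{2},\,\tfrac{x}{2}+\tfrac{3c}{4},\,0),\;(x+\tfrac{c}{2},\,\tfrac{x}{2}+\tfrac{3c}{4},\,c),\;(x+\tfrac{c}{2},\,2x+c,\,0),
\]
and verify by direct computation, using the explicit description of $\mathscr{L}_p$, that all six lie in $\mathscr{L}_p$ provided
\[
x,\; x+c,\; x+\tfrac{c}{2} \in \Rp_p \quad\text{and}\quad -\tfrac{x}{2}-\tfrac{3c}{4},\; -\tfrac{x}{2}+\tfrac{c}{4} \in \Np_p. \qquad (\star)
\]
Reading the triples column by column then shows that $s_{0,c}$ contains the $3$-cycle $(x+c,\,x+\tfrac{3c}{2},\,x+\tfrac{c}{2})$, which is nontrivial since $c \neq 0$; in particular $s_{0,c}$ is not a $p$-cycle.

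The second step is to show that some $x \in \Z_p$ satisfies $(\star)$. Set
\[
Q(x) = \bigl(1+\mu(x)\bigr)\bigl(1+\mu(x+c)\bigr)\bigl(1+\mu(x+\tfrac{c}{2})\bigr)\bigl(1-\mu(-\tfrac{x}{2}-\tfrac{3c}{4})\bigr)\bigl(1-\mu(-\tfrac{x}{2}+\tfrac{c}{4})\bigr).
\]
Then $Q(x) = 32$ on the desired set $A$, $Q(x) \leq 16$ for the at most five values of $x$ that annihilate one of the five linear factors, and $Q(x) = 0$ otherwise, giving $\sum_{x \in \Z_p} Q(x) \leq 32|A| + 80$. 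The five linear factors have pairwise distinct roots $\{0,\,-c,\,-c/2,\,-3c/2,\,c/2\}$ because $c \neq 0$, so every subproduct of $k$ of them is a polynomial of degree $k$ with $k$ distinct roots. Expanding $Q(x)$ and applying \tref{t:Weil} to the cubic, quartic and quintic subproducts, \tref{t:quadWeil} to the quadratic subproducts (whose character sums each have absolute value $1$), and observing that linear character sums vanish, I would recover verbatim the lower bound $\sum_{x \in \Z_p} Q(x) \geq p - 39\sqrt{p} - 10$ from the proof of \lref{l:nunot6}. For $p \geq 1697$ one has $p - 39\sqrt{p} - 90 > 0$, so $|A| \geq 1$ and the lemma follows.

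The main point to verify is really just bookkeeping: one must check that the entire pipeline (construction of the six triples, verification that they produce a $3$-cycle with distinct vertices, distinctness of the five roots, applicability of the Weil and quadratic-Weil bounds) goes through unchanged after the substitution $1 \mapsto c$. It does, because the construction is affine-linear in the target symbol and the Weil estimates depend only on the degree and root-distinctness of the polynomials involved, which are insensitive to any nonzero scaling. Consequently no new combinatorial or analytic ingredient beyond those used for \lref{l:nunot6} is required, and there is no serious obstacle.
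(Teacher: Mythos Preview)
Your proposal is correct and follows essentially the same approach as the paper: exhibit six triples of $\mathscr{L}_p$ that force a $3$-cycle in $s_{0,c}$, then use the Weil bound exactly as in \lref{l:nunot6} to guarantee a suitable $x$ for $p\ge1697$. The only difference is cosmetic: the paper parametrises its six triples differently (requiring $\{x,x-c\}\subseteq\Rp_p$ and $\{c-2x,\,2^{-1}c-2x,\,3\cdot2^{-1}c-2x\}\subseteq\Np_p$), whereas you obtain yours by the literal substitution $1\mapsto c$ in the \lref{l:nunot6} triples; both give five distinct linear factors and hence the identical bound $p-39\sqrt{p}-90>0$.
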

\begin{proof}
  Suppose that there exists some $x \in \Z_p$ such that $\{x, x-c\} \subseteq \Rp_p$ and $\{c-2x, 2^{-1}c-2x, 3\cdot2^{-1}c-2x\} \in \Np_p$. Then the following $6$ triples are in $\mathscr{L}_p$:
  \[
  \begin{array}{lllll}
    (x,2x,0),
    &&(4x-2c,2x-c,0),
    &&(4x-c,2x-2c^{-1},0),\\
    (x,2x-c,c),
    &&(4x-2c,2x-2c^{-1},c),
    &&(4x-c,2x,c).
  \end{array}
  \]
  Thus if such an $x \in \Z_p$ exists then the symbol permutation $s_{0, c}$ is not a $p$-cycle. The lemma now follows by arguing along the same lines as in the proof of \lref{l:nunot6}.
\end{proof}

From the proof of \lref{l:largep} it follows that if there exists some $c \in \Np_p$ and $x \in \Rp_p$ such that $x-c \in \Rp_p$ and $c-2x, 2^{-1}c-2x, 3\cdot2^{-1}c-2x \in \Np_p$ then the symbol permutation $s_{0, c}$ of $\mathscr{L}_p$ is not a $p$-cycle when $p \equiv 1 \bmod 8$. Computationally it has been verified that such elements exist for all primes $p \equiv 1 \bmod 8$ with $41 \leq p \leq 1657$. Also, the symbol permutation $s_{0, 3}$ of $\mathscr{L}_{17}$ contains the $3$-cycle $(1, 11, 8)$. In combination with the discussion before \lref{l:largep}, we get the following result.

\begin{cor}\label{c:symbperm}
  No symbol cycle of $\mathscr{L}_p$ is a $p$-cycle for any $p \equiv 1 \bmod 8$ or $p \equiv 3 \bmod 8$ except when $p \in \{3, 19\}$.
\end{cor}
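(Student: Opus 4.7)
The plan is to reduce, via the symbol-permutation analogue of \lref{l:sufconrh} (promised in the closing sentence of that lemma), the statement ``no symbol permutation of $\mathscr{L}_p$ is a $p$-cycle'' to two concrete assertions: that $s_{0,1}$ is not a $p$-cycle, and, in the case $p \equiv 1 \bmod 8$, that $s_{0,c}$ is not a $p$-cycle for some fixed non-residue $c \in \Np_p$. Once both facts are in hand, every symbol permutation of $\mathscr{L}_p$ is forced to have the cycle structure of one of these two, and so cannot be a single $p$-cycle.

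For the first assertion I would revisit the proof of \lref{l:nunot6} rather than its statement. The six triples exhibited there all involve the symbols $0$ and $1$, and they force the explicit $3$-cycle $(x+1,\ x+3\cdot 2^{-1},\ x+2^{-1})$ to appear in $s_{0,1}$ whenever the auxiliary set $A$ of that proof is non-empty. The Weil-bound calculation already supplies $A \neq \emptyset$ for $p \geq 1697$, and the computational verification of $A \neq \emptyset$ recorded in the discussion after \lref{l:nunot6} covers $p = 11$ and $20 \leq p \leq 1696$. The only leftover case, $p = 17$, is handled by the directly computed value $\nu(\mathscr{L}_{17}) = 4$, whose verification determines the cycle structures of the relevant symbol permutations.

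For the second assertion I would reuse \lref{l:largep} in the same spirit: its proof produces an explicit $3$-cycle in $s_{0,c}$ whenever the analogous residue conditions admit a solution $x \in \Rp_p$. The Weil bound in its proof handles $p \geq 1697$; the computational verification before the statement covers the primes $41 \leq p \leq 1657$ with $p \equiv 1 \bmod 8$; and for $p = 17$ the explicit $3$-cycle $(1,11,8)$ in $s_{0,3}$ settles the case.

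Assembling these two facts with \lref{l:sufconrh} produces the conclusion for all primes $p \equiv 1 \bmod 8$ or $p \equiv 3 \bmod 8$ with $p \notin \{3,19\}$. There is no substantive obstacle; the real content lives in \lref{l:nunot6} and \lref{l:largep}, and the only care required is to extract from their proofs the \emph{stronger} conclusion that $s_{0,1}$ and $s_{0,c}$ each contain a $3$-cycle (rather than the weaker statements about atomicity and Hamiltonicity actually recorded in those lemmas), and to fix one convenient non-residue $c$ at the outset when $p \equiv 1 \bmod 8$ so that \lref{l:sufconrh} can be applied cleanly.
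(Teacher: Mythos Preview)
Your proposal is correct and follows essentially the same route as the paper: reduce via the symbol-permutation version of \lref{l:sufconrh} to the two representative permutations $s_{0,1}$ and (for $p\equiv1\bmod8$) $s_{0,c}$, then extract from the proofs of \lref{l:nunot6} and \lref{l:largep} the explicit $3$-cycles together with the Weil-bound and computational ranges, and patch in $p=17$ by direct inspection. The only cosmetic difference is that the paper lets the non-residue $c$ vary with $p$ in the small-prime computation, whereas you propose fixing it in advance; either works since \lref{l:sufconrh} allows any choice of $c\in\Np_p$.
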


By combining \tref{t:3mod8rh} and \cyref{c:nunot6} we know that $\nu(\mathscr{L}_p) \in \{2, 4\}$ when $p \not\in \{3, 19\}$. To show that $\nu(\mathscr{L}_p) = 4$ we will need the following lemma, from~\cite{MR2134185}.

\begin{lem}\label{l:213conj}
  Let $p$ be prime and let $a, b \in \Z_p$ such that $ab, (1-a)(1-b) \in \Rp_p$. Then,
  \begin{enumerate}[(i)]
  \item If $p \equiv 1 \bmod 4$ then the $(2, 1, 3)$-conjugate of the Latin square $\mathcal{L}[a, b]$ is the Latin square $\mathcal{L}[1-a, 1-b]$.
  \item If $p \equiv 3 \bmod 4$ then the $(2, 1, 3)$-conjugate of the Latin square $\mathcal{L}[a, b]$ is the Latin square $\mathcal{L}[1-b, 1-a]$.
  \end{enumerate}
\end{lem}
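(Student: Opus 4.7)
The plan is a direct computation: write down the transpose of $L = \mathcal{L}[a,b]$ entrywise and recognise it as the Latin square generated by some explicit quadratic orthomorphism. Recall from the discussion of conjugates in \sref{s:intro} that the $(2,1,3)$-conjugate of a Latin square is precisely its matrix transpose. Set $\varphi = \varphi[a,b]$ and write $x = j-i$. Diagonal cyclicity gives $L_{i,j} = i + \varphi(j-i)$, so
\[
(L^T)_{i,j} = L_{j,i} = j + \varphi(i-j) = i + \bigl(x + \varphi(-x)\bigr).
\]
Hence $L^T$ is itself diagonally cyclic, generated by the map $\psi(x) = x + \varphi(-x)$, and the problem reduces to identifying $\psi$ as $\varphi[1-a,1-b]$ when $p\equiv 1 \bmod 4$ and as $\varphi[1-b,1-a]$ when $p\equiv 3 \bmod 4$.

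The only input needed is the standard fact that $-1 \in \Rp_p$ if and only if $p \equiv 1 \bmod 4$. When $p\equiv 1\bmod 4$, multiplication by $-1$ preserves the cyclotomic cosets, so for $x\in\Rp_p$ we get $\varphi(-x) = -ax$ and hence $\psi(x) = (1-a)x$, while for $x\in\Np_p$ we similarly obtain $\psi(x) = (1-b)x$. This gives $\psi = \varphi[1-a,1-b]$, proving case (i). When $p\equiv 3\bmod 4$, multiplication by $-1$ swaps $\Rp_p$ and $\Np_p$; the analogous computation yields $\psi(x) = (1-b)x$ on $\Rp_p$ and $\psi(x) = (1-a)x$ on $\Np_p$, so $\psi = \varphi[1-b,1-a]$, proving case (ii). The boundary value $\psi(0) = 0$ is automatic.

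To close, one should verify that these quadratic maps really are orthomorphisms, so that the notation $\mathcal{L}[\cdot,\cdot]$ is legitimate. Applying the criterion stated just before \lref{l:dclsrp} to the coefficients $(1-a,1-b)$, one needs $(1-a)(1-b) \in \Rp_p$ and $(-a)(-b) = ab \in \Rp_p$, which are exactly the hypotheses of the lemma; the same check handles $(1-b,1-a)$. There is no genuine obstacle in this proof: the argument is essentially formal once the transpose has been rewritten in diagonally cyclic form, and the only subtlety worth isolating clearly is the case split by the residue of $p$ modulo $4$ that governs whether $-1$ is a square.
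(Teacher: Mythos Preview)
Your argument is correct: the transpose of a diagonally cyclic square generated by $\varphi$ is diagonally cyclic and generated by $\psi(x)=x+\varphi(-x)$, and the case split on whether $-1\in\Rp_p$ gives exactly the stated coefficients; the orthomorphism check is also right. The paper itself does not prove this lemma but simply quotes it from~\cite{MR2134185}, so there is no in-paper proof to compare against; your direct computation is precisely the natural verification one would expect in that reference.
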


We can now prove \tref{t:RCnotS}.

\begin{proof}[Proof of \tref{t:RCnotS}]
The $19 \times 19$ Latin square $\mathcal{L}_2[3, 14, 8, 7, 11, 14]$
has $\nu = 4$, so we may assume that $p \neq 19$.
\tref{t:3mod8rh} tells us that $\mathscr{L}_p$ is row-Hamiltonian. 
As $\mathscr{L}_p=\mathcal{L}[-1, 2]$,
we know from \lref{l:213conj} that $\mathscr{L}_p$ is equal to its
$(2,1,3)$-conjugate when $p \equiv 3 \bmod 8$ and is isomorphic to
its $(2,1,3)$-conjugate when $p \equiv 1 \bmod 8$. In either case, the
$(2,1,3)$-conjugate of $\mathscr{L}_p$ must be row-Hamiltonian. Combining
with \cyref{c:nunot6} and \lref{l:nueven}, we see that
$\nu(\mathscr{L}_p) = 4$.
\end{proof}

\subsection{On Latin squares with $\nu = 4$}\label{ss:lsnu4}

As mentioned in \sref{s:bm} there are no Latin squares with $\nu = 4$
of order less than $11$. In particular there is no $3 \times 3$ Latin
square with $\nu = 4$ and so combining this with \tref{t:RCnotS} we
have resolved the question of existence of Latin squares with $\nu=4$
of prime order $p \equiv 1 \bmod 8$ or $p \equiv 3 \bmod 8$. We note
that the Latin squares $\mathscr{L}_p$ are all of the form
$\mathcal{L}[a, 1-a]$ for some $a \in \Z_p$. There are several reasons
why Latin squares of this form are promising candidates to have $\nu=4$.
Firstly, we only need the product $a(1-a) \in \Rp_p$ for the Latin
square $\mathcal{L}[a, 1-a]$ to be well defined. Furthermore, we know
from \lref{l:213conj} that $\mathcal{L}[a, 1-a]$ is isomorphic to its
$(2, 1, 3)$-conjugate. So it follows that $\nu(\mathcal{L}[a, 1-a])
\in \{0, 4, 6\}$. So it is perhaps slightly more likely that a square of
this form will have $\nu = 4$. In addition to the family shown in
\tref{t:RCnotS}, there are examples
with $\nu = 4$ of order $p \equiv 5 \bmod 8$ or $p \equiv 7\bmod 8$.
For example, the $29 \times 29$ Latin square $\mathcal{L}[3,27]$
has $\nu = 4$. This is the least prime order $p\equiv 5 \bmod 8$
for any $\mathcal{L}[a, 1-a]$ with $\nu=4$.
For all primes $p \equiv 5 \bmod 8$ with $29 \leq p \leq 1000$,
there exists a Latin square $\mathcal{L}[a, 1-a]$ with $\nu = 4$ for
some $a \in \Z_p$, except when $p \in \{37, 317, 661\}$. However, there
are Latin squares generated by quadratic orthomorphisms of these
orders with $\nu = 4$, including the $37 \times 37$ Latin square
$\mathcal{L}[9, 25]$, the $317 \times 317$ Latin square
$\mathcal{L}[3, 13]$ and the $661 \times 661$ Latin square
$\mathcal{L}[42, 532]$. The $47 \times 47$ Latin square
$\mathcal{L}[6, 42]$ also has $\nu = 4$, and is the Latin square of
least order $p \equiv 7 \bmod 8$ of the form $\mathcal{L}[a, 1-a]$
with $\nu = 4$. For all primes $p \equiv 7 \bmod 8$ with $47 \leq p
\leq 1000$, there exists a Latin square $\mathcal{L}[a, 1-a]$ with
$\nu = 4$ for some $a \in \Z_p$. These examples raise the tantalising
prospect of extending \tref{t:RCnotS} to all large prime orders.

For composite orders, not much is known about row-Hamiltonian Latin
squares and even less is known about Latin squares with $\nu=4$.  We
know that any such square must have odd order at least $15$. With a
computer it is easy to check that there is no diagonally cyclic Latin
square of order $15$ with $\nu = 4$. So the smallest known example of
a Latin square of composite order with $\nu = 4$ is the $21 \times 21$
diagonally cyclic Latin square, whose first row is $(0, 8, 15, 18, 13,
11, 14, 4, 12, 2, 9, 7, 17, 3, 5, 10, 19, 6, 16, 20, 1)$.

\section{Falconer varieties}\label{s:app}

In this section we will prove \tref{t:falcsol}. First we give the universal algebraic definition of a quasigroup.

\begin{defin}
  A \emph{quasigroup} $(Q, \cdot, \backslash, /)$ is a non-empty set $Q$ with binary operations $(\cdot)$, $(\backslash)$ and $(/)$, called multiplication, left division and right division respectively, such that the following properties hold for all $x, y \in Q$:
  \begin{enumerate}[(i)]
  \item $x \cdot (x \backslash y) = y$,
  \item $x \backslash (x \cdot y) = y$,
  \item $(y / x) \cdot x = y$,
  \item $(y \cdot x) / x = y$.
  \end{enumerate}
\end{defin} 

We will often refer to the quasigroup $(Q, \cdot, \backslash, /)$ simply by $Q$. By relabelling the elements of a finite quasigroup $Q$ we may assume that $Q = \Z_n$ for some positive integer $n$, which we call the order of $Q$. From now on we will assume that every finite quasigroup is of this form. Recall that the multiplication table of a quasigroup is a Latin square. Conversely, every Latin square is the multiplication table of some finite quasigroup. For an $n \times n$ Latin square $L$, we can define a quasigroup $Q = \Z_n$ by the following operations. We define the operation $(\cdot)$ on $Q$ by $a \cdot b = L_{a, b}$. We define left and right division on $Q$ as follows. Let $a, b \in Q$. Let $y \in Q$ satisfy $L_{a, y} = b$ and $x \in Q$ be such that $L_{x, b} = a$. We then define $a \backslash b = y$ and $a / b = x$. Then $Q$ is a quasigroup with multiplication table $L$.

We will now construct an infinite family of Falconer varieties. Let $Q$ be a quasigroup. For $x \in Q$ we define $L_x : Q \to Q$ by $L_x(y) = x \cdot y$. Then $L_x$ is bijective, with inverse $L_x^{-1} : Q \to Q$ defined by $L_x^{-1}(y) = x \backslash y$. We also define $R_x : Q \to Q$ by $R_x(y) = y \cdot x$. Then $R_x^{-1}$ is also bijective with inverse $R_x^{-1} : Q \to Q$ defined by $y \mapsto y / x$. Let $p > 3$ be prime with $p \equiv 1 \bmod 8$ or $p \equiv 3 \bmod 8$. Define $m_p = \lcm(1, 2, \ldots, p-1)$, and let $E_p$ be the set consisting of the following two identities:
\[
(L_x^{-1}L_y)^p(z) = z,
\]
\[
(R_x^{-1}R_y)^{m_p}(z) = z.
\]
Let $\V_p$ denote the loop variety defined by $E_p$. We will show that each $\V_p$ is a Falconer variety, proving \tref{t:falcsol}.

\begin{proof}[Proof of \tref{t:falcsol}]
  We first show that $\V_p$ is non-trivial. Suppose that $p \neq 19$. Let $Q$ be a loop that is isotopic to the $(2, 3, 1)$-conjugate of $\mathscr{L}_p$. Let $S$ denote the multiplication table of $Q$. 
  We will show that $Q$ satisfies the identities in $E_p$. Let $x, y \in Q$, and let $r$ denote the row permutation $r_{x, y}$ of $S$. We will show that $L_x^{-1}L_y = r$. Let $z \in Q$ and note that $L_x^{-1}L_y(z) = y \cdot (x \backslash z) = S_{y, w}$ where $S_{x, w} = z$. By definition of $r$, we have that $r(z) = r(S_{x, w}) = S_{y, w}$, proving the claim that $L_x^{-1}L_y = r$. From the proof of \tref{t:RCnotS} we know that the $S$ is row-Hamiltonian. So $r$ is a $p$-cycle and thus $r^p$ is the identity permutation and so $(L_x^{-1}L_y)^p(z) = z$ for all $z \in Q$ and the first identity in $E_p$ is satisfied. Let $c=R_x^{-1}R_y$ denote the column permutation $c_{x, y}$ of $S$. We know from \cyref{c:symbperm} that no symbol permutation of $\mathscr{L}_p$ is a $p$-cycle. 
  It follows that no column permutation of $S$ is a $p$-cycle and hence $c^{m_p}$ is also the identity permutation. Thus the second identity in $E_p$ is satisfied. So $Q \in \V_p$, meaning that $\V_p$ is non-trivial.
  We can use similar arguments to deal with the case when $p=19$. It suffices to note that the $19 \times 19$ Latin square $\mathcal{L}[3, 13]$ is row-Hamiltonian, but has no column cycle which is a $19$-cycle. Hence $\V_{19}$ is non-trivial also.
  
  The fact that $\V_p$ is isotopically $L$-closed is due to the fact that isotopy preserves the lengths of row cycles and column cycles.
  
  We will now show that $\mathscr{V}_p$ is anti-associative. Suppose that $G \in \mathscr{V}_p$ is a group and let $x, y \in G$. Then $z = (L_x^{-1}L_y)^p(z) = (y \cdot x^{-1})^p \cdot z$ for all $z \in G$, meaning the order of $y \cdot x^{-1}$ divides $p$. As this is true for any $x, y \in G$ it follows that every element of $G$ has order dividing $p$. Similarly, $z = (R_x^{-1}R_y)^{m_p}(z) = z \cdot (x^{-1} \cdot y)^{m_p}$ for all $z \in G$, and so the order of every element in $G$ divides $m_p = \lcm(1, 2, \ldots, p-1)$, which is relatively prime to $p$. Hence $G$ contains only the identity element and $\V_p$ is anti-associative. 
  
  Finally, we will show that the smallest, non-trivial member of $\V_p$ has order $p$. We have seen that if a finite, non-trivial loop $Q$ satisfies the first identity in $E_p$ then every row permutation of the multiplication table of $Q$ has order $p$. Hence the order of $Q$ must be a multiple of $p$, and the result follows.
\end{proof}

\section{Conclusion and further research}\label{s:c}

We have constructed the first infinite family of Latin squares with $\nu=4$. We have also used these Latin squares to construct infinitely many non-trivial, anti-associative, isotopically $L$-closed loop varieties, solving an open problem published by Falconer.

Rosa~\cite{MR3954017} conjectured that the number of (isomorphism classes of) perfect $1$-factorisations of $K_{2n}$ approaches infinity as $n \to \infty$. Combining this with the fact that a perfect $1$-factorisation of $K_{2n-1, 2n-1}$ can be constructed from a perfect $1$-factorisation of $K_{2n}$, and with the equivalence of perfect $1$-factorisations of $K_{n, n}$ and $n \times n$ row-Hamiltonian Latin squares, we obtain the conjecture that the number of (isotopism classes of) row-Hamiltonian Latin squares of odd order tends to infinity. This conjecture has been resolved only in the case of Latin squares of order $p^2$ for odd primes $p$. In particular, it was shown in~\cite{MR1899629} that the number of isotopism classes of $n \times n$ row-Hamiltonian Latin squares increases at least linearly with $n$ when $n$ is the square of an odd prime. Call a pair $(a, b) \in \Z_p^* \times \Z_p^*$ with $a < b$ \emph{valid} if the $p \times p$ Latin square $\mathcal{L}[a, b]$ is row-Hamiltonian. The following plot shows the number of valid pairs in $\Z_p^*\times\Z_p^*$.

\begin{center}
  \begin{tikzpicture}
    \begin{axis}[title = {$p$ vs number of valid pairs}, xlabel = {$p$}, ylabel = {number of valid pairs}, xmin=0, xmax=1000, ymin=0, ymax=1400, xtick = {0, 200, 400, 600, 800, 1000}, ytick = {0, 200, 400, 600, 800, 1000, 1200, 1400}, legend pos = north west]
      \addplot[color = blue, mark=circle]
      coordinates {(5, 0)
	(13, 3)
	(17, 4)
	(29, 18)
	(37, 23)
	(41, 16)
	(53, 36)
	(61, 42)
	(73, 62)
	(89, 52)
	(97, 66)
	(101, 93)
	(109, 89)
	(113, 89)
	(137, 105)
	(149, 133)
	(157, 136)
	(173, 157)
	(181, 139)
	(193, 132)
	(197, 160)
	(229, 198)
	(233, 196)
	(241, 195)
	(257, 261)
	(269, 231)
	(277, 274)
	(281, 235)
	(293, 249)
	(313, 272)
	(317, 312)
	(337, 284)
	(349, 316)
	(353, 382)
	(373, 342)
	(389, 369)
	(397, 345)
	(401, 392)
	(409, 334)
	(421, 397)
	(433, 348)
	(449, 443)
	(457, 441)
	(461, 449)
	(509, 529)
	(521, 558)
	(541, 587)
	(557, 556)
	(569, 517)
	(577, 520)
	(593, 579)
	(601, 534)
	(613, 608)
	(617, 630)
	(641, 594)
	(653, 601)
	(661, 637)
	(673, 742)
	(677, 668)
	(701, 716)
	(709, 674)
	(733, 671)
	(757, 771)
	(761, 724)
	(769, 779)
	(773, 805)
	(797, 848)
	(809, 840)
	(821, 838)
	(829, 822)
	(853, 838)
	(857, 863)
	(877, 891)
	(881, 855)
	(929, 896)
	(937, 896)
	(941, 1015)
	(953, 998)
	(977, 1016)
	(997, 946)
      };
      \addplot[color = red, mark=circle]
      coordinates {
	(3, 0)
	(7, 0)
	(11, 5)
	(19, 10)
	(23, 20)
	(31, 14)
	(43, 41)
	(47, 44)
	(59, 52)
	(67, 43)
	(71, 79)
	(79, 72)
	(83, 90)
	(103, 119)
	(107, 122)
	(127, 144)
	(131, 132)
	(139, 154)
	(151, 172)
	(163, 171)
	(167, 166)
	(179, 181)
	(191, 246)
	(199, 209)
	(211, 233)
	(223, 254)
	(227, 260)
	(239, 334)
	(251, 320)
	(263, 335)
	(271, 310)
	(283, 291)
	(307, 366)
	(311, 394)
	(331, 354)
	(347, 388)
	(359, 439)
	(367, 413)
	(379, 474)
	(383, 471)
	(419, 534)
	(431, 594)
	(439, 542)
	(443, 560)
	(463, 522)
	(467, 656)
	(479, 640)
	(487, 591)
	(491, 625)
	(499, 567)
	(503, 628)
	(523, 668)
	(547, 633)
	(563, 749)
	(571, 686)
	(587, 774)
	(599, 727)
	(607, 758)
	(619, 746)
	(631, 743)
	(643, 790)
	(647, 858)
	(659, 849)
	(683, 839)
	(691, 839)
	(719, 863)
	(727, 877)
	(739, 906)
	(743, 948)
	(751, 980)
	(787, 987)
	(811, 977)
	(823, 1040)
	(827, 1044)
	(839, 1077)
	(859, 1009)
	(863, 1092)
	(883, 1159)
	(887, 1119)
	(907, 1157)
	(911, 1150)
	(919, 1170)
	(947, 1233)
	(967, 1179)
	(971, 1208)
	(983, 1117)
	(991, 1204)};
      \legend{$1 \bmod 4$, $3 \bmod 4$}
    \end{axis}
  \end{tikzpicture}
\end{center}

This suggests that the conjecture is true for Latin squares of prime
order.  Note that Dr\'apal and Wanless~\cite{DW23} have shown that for
prime orders $\mathcal{L}[a,b]$ is isomorphic to $\mathcal{L}[c, d]$
only if $\{a,b\}=\{c,d\}$. A similar statement is likely to hold with
isotopic in place of isomorphic.

In fact, this plot suggests that the number of isotopism classes of row-Hamiltonian Latin squares of order $p$ generated by quadratic orthomorphisms increases linearly with $p$. We would like to prove that this is indeed the case, potentially using \tref{t:linkmatcyc}, as we did to prove that the Latin squares $\mathscr{L}_p$ are row-Hamiltonian.

As mentioned in \sref{s:bm}, given an $n \times n$ row-Hamiltonian Latin square $L$, you can construct a perfect $1$-factorisation of $K_{n, n}$. It is known that if $L$ also satisfies certain symmetry conditions, you can construct a perfect $1$-factorisation of $K_{n+1}$ from $L$ (see~\cite{MR2130738}). When $p \equiv 3 \bmod 4$ and $a \in N_p \setminus \{-1\}$ the $p \times p$ Latin square $\mathcal{L}[a, a^{-1}]$ satisfies the required symmetry conditions~\cite{MR623318}. The following plot shows the number of row-Hamiltonian Latin squares $\mathcal{L}[a, a^{-1}]$ of prime order $p \equiv 3 \bmod 4$.

\begin{center}
	\begin{tikzpicture}
		\begin{axis}[title = {$p$ vs number of row-Hamiltonian $\mathcal{L}[a, a^{-1}]$}, xlabel = {$p$}, ylabel = {number of row-Hamiltonian $\mathcal{L}[a, a^{-1}]$}, xmin=0, xmax=2500, ymin=0, ymax=80, xtick = {0, 500, 1000, 1500, 2000, 2500}, ytick = {0, 20, 40, 60, 80}]
			\addplot[color = blue, only marks, mark=*]
			coordinates {(3,0)
				(7,0)
				(11,2)
				(19,4)
				(23,8)
				(31,0)
				(43,2)
				(47,4)
				(59,4)
				(67,6)
				(71,6)
				(79,8)
				(83,16)
				(103,6)
				(107,8)
				(127,16)
				(131,12)
				(139,12)
				(151,12)
				(163,10)
				(167,16)
				(179,22)
				(191,16)
				(199,26)
				(211,22)
				(223,24)
				(227,20)
				(239,16)
				(251,12)
				(263,22)
				(271,12)
				(283,18)
				(307,20)
				(311,24)
				(331,12)
				(347,24)
				(359,18)
				(367,22)
				(379,20)
				(383,18)
				(419,28)
				(431,24)
				(439,32)
				(443,20)
				(463,16)
				(467,12)
				(479,24)
				(487,26)
				(491,26)
				(499,38)
				(503,20)
				(523,16)
				(547,30)
				(563,26)
				(571,32)
				(587,52)
				(599,26)
				(607,32)
				(619,12)
				(631,22)
				(643,32)
				(647,36)
				(659,42)
				(683,34)
				(691,26)
				(719,22)
				(727,38)
				(739,28)
				(743,28)
				(751,28)
				(787,34)
				(811,30)
				(823,36)
				(827,32)
				(839,22)
				(859,42)
				(863,40)
				(883,30)
				(887,26)
				(907,34)
				(911,40)
				(919,48)
				(947,42)
				(967,46)
				(971,52)
				(983,30)
				(991,28)
				(1019,40)
				(1031,44)
				(1039,50)
				(1051,38)
				(1063,48)
				(1087,38)
				(1091,36)
				(1103,42)
				(1123,34)
				(1151,44)
				(1163,50)
				(1171,30)
				(1187,24)
				(1223,48)
				(1231,46)
				(1259,46)
				(1279,32)
				(1283,52)
				(1291,54)
				(1303,40)
				(1307,48)
				(1319,42)
				(1327,26)
				(1367,42)
				(1399,40)
				(1423,36)
				(1427,46)
				(1439,48)
				(1447,52)
				(1451,52)
				(1459,62)
				(1471,40)
				(1483,50)
				(1487,40)
				(1499,50)
				(1511,58)
				(1523,44)
				(1531,60)
				(1543,32)
				(1559,66)
				(1567,32)
				(1571,68)
				(1579,42)
				(1583,64)
				(1607,62)
				(1619,60)
				(1627,52)
				(1663,62)
				(1667,40)
				(1699,52)
				(1723,50)
				(1747,46)
				(1759,32)
				(1783,58)
				(1787,60)
				(1811,62)
				(1823,52)
				(1831,38)
				(1847,42)
				(1867,54)
				(1871,38)
				(1879,58)
				(1907,34)
				(1931,48)
				(1951,42)
				(1979,54)
				(1987,52)
				(1999,46)
				(2003,54)
				(2011,50)
				(2027,48)
				(2039,42)
				(2063,56)
				(2083,56)
				(2087,58)
				(2099,54)
				(2111,72)
				(2131,38)
				(2143,34)
				(2179,54)
				(2203,66)
				(2207,70)
				(2239,62)
				(2243,64)
				(2251,48)
				(2267,72)
				(2287,36)
				(2311,70)
				(2339,66)
				(2347,52)
				(2351,52)
				(2371,54)
				(2383,50)
				(2399,52)
				(2411,72)
				(2423,48)
				(2447,58)
				(2459,70)
				(2467,50)
			};
		\end{axis}
	\end{tikzpicture}
\end{center}

This plot gives evidence for Rosa's conjecture about the number of perfect $1$-factorisations of $K_{n+1}$ when $n \equiv 3 \bmod 4$ is prime. It also suggests that the conjecture could be resolved in this case by looking at Latin squares $\mathcal{L}[a, a^{-1}]$ and potentially using \tref{t:linkmatcyc}.

The problem stated by Falconer also suggests a direction for future research. To solve Falconer's problem, it sufficed to construct a row-Hamiltonian Latin square that also had the property that no column permutation was a single cycle. This suggests exploring the following problem. Given relatively prime integers $a$ and $b$, can we construct a Latin square $L$ such that every row cycle of $L$ has length $a$ and every column cycle of $L$ has length $b$?

\section*{Acknowledgements}

The authors are grateful to Michael Kinyon for drawing their attention to Falconer's open problem. The second author also acknowledges Darryn Bryant, Barbara Maenhaut and Victor Scharaschkin for their work with him on an earlier bid to prove \tref{t:RCnotS} by an entirely different method.

\printbibliography

\end{document}